\documentclass[a4paper]{amsart}

\usepackage{anysize}


\usepackage{amssymb,latexsym}
\usepackage{amsmath,amsthm}
\usepackage{amsfonts,mathrsfs}
\usepackage{mathtools}


\usepackage{halloweenmath}

\usepackage{tikz-cd}

\usepackage{todonotes}

\usepackage[all]{xy}
\usepackage{graphicx}

\usepackage{xcolor,url}
\usepackage{hyperref}
\hypersetup{
    colorlinks,
    citecolor=blue,
    filecolor=blue,
    linkcolor=black,
    urlcolor=blue
}

\usepackage{enumitem}

\pagestyle{plain}



\usepackage{listings}
\usepackage{color}

\definecolor{dkgreen}{rgb}{0,0.6,0}
\definecolor{gray}{rgb}{0.5,0.5,0.5}
\definecolor{mauve}{rgb}{0.58,0,0.82}

\lstset{frame=tb,
  language=Java,
  aboveskip=3mm,
  belowskip=3mm,
  showstringspaces=false,
  columns=flexible,
  basicstyle={\small\ttfamily},
  numbers=none,
  numberstyle=\tiny\color{gray},
  keywordstyle=\color{blue},
  commentstyle=\color{dkgreen},
  stringstyle=\color{mauve},
  breaklines=true,
  breakatwhitespace=true,
  tabsize=3
}

\usepackage{forest}

\usepackage{tabularray}
\UseTblrLibrary{diagbox}


\theoremstyle{plain}
\newtheorem{theorem}{Theorem}[section]

\newtheorem{corollary}[theorem]{Corollary}
\newtheorem{lemma}[theorem]{Lemma}

\newtheorem{fact}[theorem]{Fact}
\newtheorem{conjecture}[theorem]{Conjecture}
\newtheorem*{theorem*}{Theorem}
\newtheorem*{corollary*}{Corollary}
\newtheorem*{conjecture*}{Conjecture}

\newtheorem*{generalburnsideproblem}{\textbf{The General Burnside Problem}}
\newtheorem*{burnsideproblem}{\textbf{The Burnside Problem}}
\newtheorem*{restrictedburnsideproblem}{\textbf{The Restricted Burnside Problem}}

\newtheorem*{generalengelgroupproblem}{\textbf{The General Local Nilpotency Problem for Engel Groups}}
\newtheorem*{engelgroupproblem}{\textbf{The Local Nilpotency Problem for Engel Groups}}

\newtheorem*{engelLAproblem}{\textbf{The Local Nilpotency Problem for Engel Lie algebras}}

\newtheorem*{generalkuroshproblem}{\textbf{The General Kurosh-Levitzki Problem}}
\newtheorem*{kuroshproblem}{\textbf{The Kurosh-Levitzki Problem}}

\theoremstyle{definition}

\theoremstyle{remark}
\newtheorem{remark}[theorem]{Remark}
\newtheorem{claim}{Claim}




\newcommand\N{\mathbb{N}}

\newcommand\F{\mathbb{F}}




\newcommand\LL{\mathscr{L}}


\newcommand{\End}{\mathrm{End}}

\def\seq{\subseteq}

\def\rteq{\trianglerighteq}

\newcommand{\set}[1]{\{ {#1} \}}

\DeclareMathOperator{\Aut}{Aut}

\DeclareMathOperator{\ad}{ad}
\DeclareMathOperator{\Span}{Span}


\definecolor{airforceblue}{rgb}{0.36, 0.54, 0.66}


\def\Ind{\setbox0=\hbox{$x$}\kern\wd0\hbox to 0pt{\hss$\mid$\hss}
\lower.9\ht0\hbox to 0pt{\hss$\smile$\hss}\kern\wd0}
\def\Notind{\setbox0=\hbox{$x$}\kern\wd0\hbox to 0pt{\mathchardef
\nn=12854\hss$\nn$\kern1.4\wd0\hss}\hbox to
0pt{\hss$\mid$\hss}\lower.9\ht0 \hbox to 0pt{\hss$\smile$\hss}\kern\wd0}

\def\indi#1{\mathop{\ \ \hbox to 0ex{\hss$\vert^{\hbox to 0ex{$\scriptstyle#1$\hss}}$\hss}
\lower1ex\hbox to 0ex{\hss$\smile$\hss}\ \ }}

\def\nindi#1{\mathop{\ \ \hbox to 0ex{\hss$\!\not{\vert}^{\hbox to 0ex{$\scriptstyle\,#1$\hss}}$\hss}
\lower1ex\hbox to 0ex{\hss$\smile$\hss}\ \ }}

\renewcommand{\models}{\vDash}


\begin{document}

\title{Wilson conjecture for omega-categorical Lie algebras, the case $3$-Engel characteristic $5$}

\author[C. d'Elb\'{e}e]{Christian d\textquoteright Elb\'ee$^\dagger$}
\address{School of Mathematics, University of Leeds\\
Office 10.17f LS2 9JT, Leeds}
\email{C.M.B.J.dElbee@leeds.ac.uk}
\urladdr{\href{http://choum.net/\textasciitilde chris/page\textunderscore perso/}{http://choum.net/\textasciitilde chris/page\textunderscore perso/}}

\thanks{
\begin{minipage}{0.8\textwidth}
 The author is fully supported by the UKRI Horizon Europe Guarantee Scheme, grant no EP/Y027833/1.
\end{minipage}%
\begin{minipage}{0.2\textwidth}
\begin{center}
    \includegraphics[scale=.04]{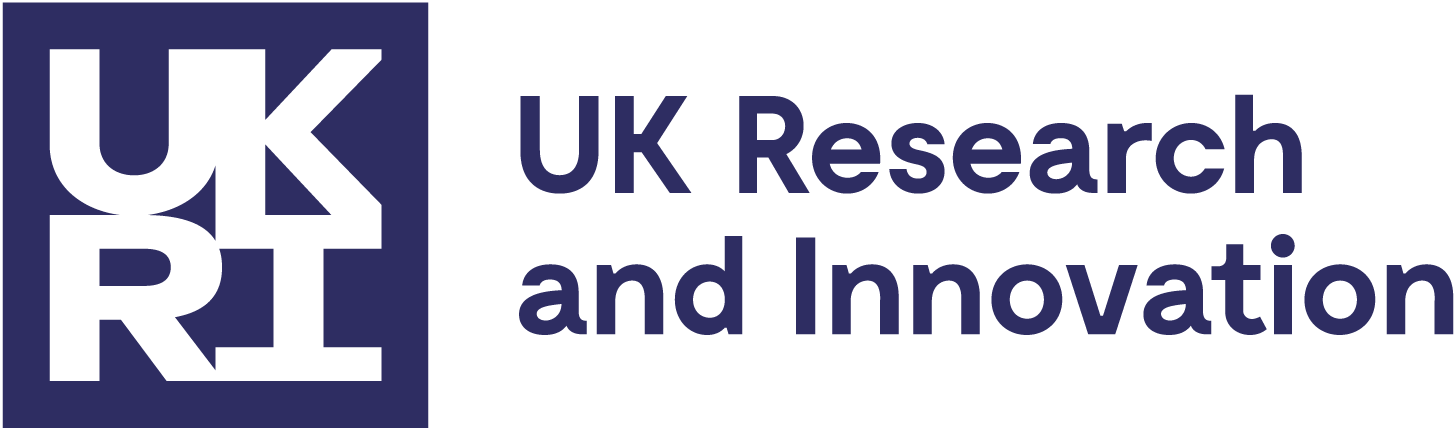}
\end{center}
\end{minipage}
}

\date{\today}

\begin{abstract}
We prove a version of the Wilson conjecture for $\omega$-categorical $3$-Engel Lie algebras over a field of characteristic $5$: every $\omega$-categorical Lie algebra over $\F_5$ which satisfies the identity $[x,y^3] = 0$ is nilpotent. We also include an extended introduction to Wilson's conjecture: \textit{every $\omega$-categorical locally nilpotent $p$-group is nilpotent}, and present variants of this conjecture and connections to local/global nilpotency problems (Burnside, Kurosh-Levitzki, Engel groups).
No particular knowledge of model theory is assumed except basic notions of formulas and definable sets.
\end{abstract}

\maketitle

\section{Introduction} 

A structure $M$ (group, Lie algebra, associative algebra, graph, etc) is \textit{$\omega$-categorical} if there is a unique countable model of its first-order theory up to isomorphism. By a classical result of Engeler, Ryll-Nardzewski, Svenonius, this notion has a dynamical definition: $M$ is $\omega$-categorical if and only if for each $n\geq 1$ there are finitely many orbits in the componentwise action of $\Aut(M)$ on the cartesian product $M^n$. All $\omega$-categorical structures considered here will be countable. The notion of $\omega$-categoricity captures \textit{highly symmetric} objects.

In a foundational paper \cite{wilson1981}, Wilson starts a systematic study of $\omega$-categorical groups and states the following conjecture:
\begin{conjecture}[Wilson, 1981]\label{conj:wilson}
    Every locally nilpotent $\omega$-categorical $p$-group is nilpotent.
\end{conjecture}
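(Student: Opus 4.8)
The plan is to run the standard Ryll--Nardzewski reductions, isolate the single hard point as a statement about centres, and then attack that point through an associated Lie algebra and Zelmanov's nilpotency theorem for Engel Lie algebras. Throughout, $G$ denotes a locally nilpotent $\omega$-categorical $p$-group, and the goal is to show that its lower (equivalently upper) central series reaches the whole group in finitely many steps.

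First I would extract the finiteness consequences of $\omega$-categoricity. Since $\Aut(G)$ has finitely many orbits on $G$ and the order of an element is an automorphism invariant, only finitely many orders occur, so $G$ has finite exponent $p^k$. Since there are finitely many orbits on $G^2$ and, by local nilpotency, for each pair $(x,y)$ some left-normed commutator $[x,\underbrace{y,\dots,y}_{m}]$ vanishes, the increasing chain of $\emptyset$-definable sets $\{(x,y):[x,{}_m y]=1\}$ stabilises at $G^2$; hence $G$ is $n$-Engel for a fixed $n$. Finally, there are only finitely many $\emptyset$-definable subsets of $G$ (each is a union of the finitely many $1$-types), so $G$ enjoys both the ascending and the descending chain condition on $\emptyset$-definable subgroups.

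Next I would use the chain conditions to reduce nilpotency to a statement about the centre. Each term $Z_i$ of the upper central series is $\emptyset$-definable (being cut out by centraliser conditions modulo $Z_{i-1}$), so by the ascending chain condition the series stabilises after finitely many steps at the hypercentre $Z_\infty=Z_d$, and $G/Z_d$ is again a locally nilpotent $\omega$-categorical $p$-group, now with trivial centre. Thus $G$ is nilpotent of class $d$ if and only if $G/Z_d=1$, and the conjecture is \emph{equivalent} to the following centre lemma: every nontrivial locally nilpotent $\omega$-categorical $p$-group has nontrivial centre. ($\omega$-categoricity is essential here: McLain's locally nilpotent $p$-group of exponent $p$ has trivial centre yet is not $\omega$-categorical.) Equivalently, using the descending chain condition one picks a minimal nontrivial $\emptyset$-definable normal subgroup $M$ (commutator subgroups are $\emptyset$-definable in this setting, by the finiteness of commutator width that the chain condition forces), and then must rule out the $G$-perfect case $[M,G]=M$, since $[M,G]=1$ already forces $M\leq Z(G)$.

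The heart of the matter, and the main obstacle, is this centre lemma. My strategy is to pass to an associated $\F_p$-Lie algebra $L$ built from a central filtration that refines the lower central series by $p$-th powers, so that $L$ sees all of $G$ rather than merely $G/M$; to transport $\omega$-categoricity and the $n$-Engel identity to $L$; and to deduce $Z(L)\neq 0$ from the nilpotency of $L$, which then pulls back to a nontrivial centre of $G$. The nilpotency of $L$ would follow from Zelmanov's theorem that $n$-Engel Lie algebras are nilpotent. The difficulty is that this theorem is unconditional only in characteristic zero and in large characteristic relative to $n$: in the small-characteristic regime $p\leq n$ — exactly the regime relevant to exponent-$p$ groups, whose associated Lie algebra satisfies the $(p-1)$-Engel identity — nilpotency can fail, and the translation between the group centre and the Lie centre becomes delicate. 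Controlling this small-characteristic case is precisely where the conjecture remains open in general; the present paper carries out the Lie-algebra side of this programme in the first genuinely hard instance, the $3$-Engel identity over $\F_5$.
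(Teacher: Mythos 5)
The statement you were given is Conjecture~\ref{conj:wilson} itself: it is an \emph{open conjecture}, the paper offers no proof of it, and neither does your proposal --- as you yourself concede in your final sentence, the key step ``remains open.'' To give you credit where it is due: your preliminary reductions are correct and agree with what the paper assembles in Fact~\ref{fact:fromlitterature}. Finite exponent $p^k$ from finitely many $1$-orbits, the $n$-Engel identity from stabilisation of the increasing $\Aut(G)$-invariant chain $\Set{(x,y) : [x,y^m]=1}$, the chain conditions on $\emptyset$-definable subgroups, the $\emptyset$-definability of the upper central series, and the resulting equivalence of the conjecture with your ``centre lemma'' (every nontrivial locally nilpotent $\omega$-categorical $p$-group has nontrivial centre) are all sound, and the last point is close in spirit to Wilson's own reduction, recorded in the paper, to showing that characteristically simple $\omega$-categorical $p$-groups are abelian.

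The genuine gaps are in the Lie-theoretic attack on the centre lemma, and they are exactly the obstructions the paper is organised around. First, your plan to ``transport $\omega$-categoricity'' to the associated Lie algebra $L$ fails: the paper states explicitly that $\omega$-categoricity is \emph{not} preserved under the passage $\star_1$ from a group to its associated Lie ring, because the graded algebra of the lower central (or Zassenhaus) filtration is not in general interpretable in $G$; this is precisely why solving Wilson's conjecture for Lie algebras would not automatically solve it for groups. The only known repair is the Lazard correspondence of Section~\ref{section:lazardcorrespondence}, which requires every $3$-generated subgroup to have class $<p$ so that the Lie operations become first-order definable on $G$ itself --- a strong extra hypothesis, not available in general. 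Second, even granting an $\omega$-categorical $n$-Engel Lie algebra over $\F_p$, Zelmanov's theorem yields only \emph{local} nilpotency in positive characteristic; global nilpotency is false for $n$-Engel Lie algebras in the small-characteristic regime ($3$-Engel in characteristics $2$ and $5$, $4$-Engel in characteristic $\leq 5$, Razmyslov's non-solvable $(p-2)$-Engel algebras), and for $\omega$-categorical $L$ the desired global nilpotency \emph{is} Wilson's conjecture for Lie algebras, of which the paper settles only the $3$-Engel characteristic-$5$ instance (Theorem~\ref{thm:finite4typesEngel3char5nilpotent}). Third, your pullback from $Z(L)\neq 0$ to $Z(G)\neq 1$ is not secured: nilpotency of the graded Lie ring of class $c$ only gives $\gamma_{c+1}=\gamma_{c+2}$, and absent residual nilpotency of $G$ (which local nilpotency does not provide) this does not force $\gamma_{c+1}=1$ or a nontrivial centre. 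In short, your text is a competent survey of the known reductions together with the standard --- and known-to-be-obstructed --- Lie-theoretic programme, not a proof; no proof exists, and the paper claims none.
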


Under the conjecture, in every $\omega$-categorical group $G$ there is a finite characteristic series $G = G_1\rteq G_2\rteq \ldots \rteq G_n = 1$ such that each quotient $G_i/G_{i+1}$ is either an elementary abelian group or a boolean power of a finite simple group, see \cite{Apps83A}. A positive answer to Wilson's conjecture would also imply that every locally nilpotent $\omega$-categorical group is nilpotent. 

The conjecture has been solved under extra model-theoretic assumptions but seems overall out of reach for now. A few general structural results were obtained in the 80's by Rosenstein, Cherlin, Wilson, Apps \cite{Rosenstein1973, wilson1981, Apps83A,Apps83B, Apps83C} then later by Macpherson \cite{Macpherson1988ubiquitousomcatgroups} and Archer and Macpherson \cite{ArcherMacpherson1997}.

Model theorists have been quite successful in analyzing the structure of \textit{tame} $\omega$-categorical groups, starting with Felgner \cite{Felgner1978} who described the stable ones, followed then by a trend of results by Baur, Cherlin, Macintyre \cite{BCM79}, Macpherson (e.g. Wilson's conjecture holds for NSOP groups \cite{Macpherson1988ubiquitousomcatgroups}), Krupinski, Wagner, Evans, Derakhshan, Dobrowolski \cite{DerakhshanWagner1997, EvansWagner2000, Krupinski2012, DobrowolskiWagner2020} (see \cite{DobrowolskiWagner2020} for a nice history section of the advances in this approach). Very recently Macpherson and Tent \cite{macpherson2024omegacategoricalpseudofinitegroups} studied pseudofinite $\omega$-categorical groups, with refined conjectures in that case. Here, our approach differs as we will not assume other model theoretic assumptions than $\omega$-categoricity, although we ask for refined algebraic assumptions (e.g. $n$-Engel). As there are no general abstract theory for $\omega$-categorical structures, this paper will not require more model-theoretic notions than that of first-order formulas.

In the long term, our strategy for attacking Wilson's conjecture relies upon the following scheme.
\[\set{\text{ groups }}\underset{\star_1}{\longrightarrow} \set{\text{ Lie algebras }}\underset{\star_2}{\longrightarrow} \set{\text{ associative algebras }}\]
In a broad sense, $\star_1$ is the passage from a group to an associated Lie algebra, sometimes Lie ring (i.e. Lie methods in group theory), and $\star_2$ is the passage from a Lie algebra to an enveloping algebra. The statement above easily translates into similar statements:
\begin{center}
    \textit{Every locally nilpotent $\omega$-categorical Lie}/\textit{associative algebra is nilpotent,}
\end{center}
which we will refer to as (the analog of) Wilson's conjecture for $\omega$-categorical Lie/associative algebras, respectively. In general, $\omega$-categoricity is not preserved under the arrows $\star_1,\star_2$, hence solving the conjecture for e.g. Lie algebras would not solve it for groups (although this happens in some cases, see \ref{section:lazardcorrespondence}).
Nonetheless, our hope is that solving the conjecture on the right side of an arrow can serve as an heuristic for solving it on the left side of the arrow, though at this point, there is no evidence towards thinking that Wilson's conjecture is easier for Lie algebras than for groups.

In two remarkable papers \cite{cherlinnilringI,cherlinnilringII}, Cherlin proves the analog of Wilson's conjecture for associative algebras\footnote{This is a bit 
anachronistic as Wilson's conjecture were stated after Cherlin's result.}, a question asked earlier by Baldwin and Rose \cite{baldwinrose1977}. Recall that a \textit{nilring} is a ring in which for each $a$ there is some $n$ with $a^n = 0$.

\begin{fact}[Cherlin, 1980]\label{fact:cherlinomegacatnilringarenilpotent}
    Any $\omega$-categorical nilring is nilpotent.
\end{fact}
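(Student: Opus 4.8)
The plan is to turn the element-wise (local) nilpotence into one global bound, invoking $\omega$-categoricity at each step to replace an unbounded quantifier by finite data. First I would upgrade nil to \emph{bounded} nil index: for $a\in R$ the least $n$ with $a^n=0$ is constant on $\Aut(R)$-orbits, since automorphisms commute with the operations, and by Ryll--Nardzewski there are finitely many $1$-orbits, so there is a uniform $N$ with $a^N=0$ for all $a\in R$. Next, $(R,+)$ is a reduct of $R$, hence again $\omega$-categorical; such abelian groups are torsion, so $R=\bigoplus_p R_p$ splits as a finite direct sum of $p$-primary ideals, and it suffices to treat one prime. Within $R_p$ the divisible part $D$ is an ideal on which multiplication degenerates: if $r$ has additive order $p^s$ and $d=p^sd'$ (using divisibility), then $rd=(p^sr)d'=0$, so $D\subseteq\operatorname{Ann}(R_p)$ with $D^2=0$. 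Since nilpotence of $R_p/D$ then lifts to $R_p$, and the reduced part of an $\omega$-categorical abelian group has bounded exponent, I reduce to the case where $(R,+)$ is a bounded-exponent abelian $p$-group.

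The second ingredient is that the power ideals are $\emptyset$-definable. For fixed $n$ the sets $S^{(n)}_m=\{\sum_{i=1}^m x_{i,1}\cdots x_{i,n}:x_{i,j}\in R\}$ are definable and increasing in $m$; as $R$ has only finitely many $\emptyset$-definable subsets (each a union of finitely many $1$-orbits), the chain stabilizes and its union $R^n$ is definable. Hence $R\supseteq R^2\supseteq\cdots$ is a descending chain of definable ideals, which likewise stabilizes at some $I=R^{n_0}$ satisfying $I=I^2=RI=IR$. Proving nilpotence is now \emph{equivalent} to showing this idempotent ideal vanishes, since $R^{n_0}=0$ is exactly nilpotence.

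To force $I=0$ I would run the upper annihilator (socle) series $0=A_0\subseteq A_1\subseteq\cdots$ with $A_{k+1}/A_k=\operatorname{Ann}(I/A_k)$. Each $A_k$ is definable, so this increasing chain stabilizes at some $A_{k_0}$, at which point $\operatorname{Ann}(I/A_{k_0})=0$. The whole argument then rests on the key lemma: \emph{a nonzero $\omega$-categorical nilring has nonzero two-sided annihilator}. Granting it, $I/A_{k_0}$ (a nilring, $\omega$-categorical as a definable quotient) must be zero, so the series reaches $I$; the standard relations $I\cdot A_{k+1}\subseteq A_k$ give $I^{k_0+1}=0$, and combined with $I=I^2$ this yields $I=0$.

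The hard part will be the key lemma, equivalently the nonexistence of a nonzero idempotent $\omega$-categorical nilring. The naive route via Nagata--Higman is unavailable: linearizing $x^N=0$ yields vanishing of the symmetric sum $\sum_{\sigma\in S_N}x_{\sigma(1)}\cdots x_{\sigma(N)}$ only when the characteristic exceeds $N$, precisely the small-characteristic hypothesis one cannot make in the cases of interest. I would instead attack the lemma by induction on the nil index $N$, using that every definable ideal and every definable quotient is again an $\omega$-categorical nilring (so minimal definable ideals exist at each stage) and that passing to suitable definable ideals or quotients built from the elements $x^{N-1}$ should strictly lower the index while keeping the bounded additive $p$-structure under control. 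Arranging this descent so that it genuinely produces an annihilating element, rather than merely circling back to the idempotent case, is the crux of the entire proof.
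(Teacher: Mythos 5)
The paper does not actually prove this Fact; it quotes it from Cherlin's two papers, so the benchmark is Cherlin's original argument, whose combinatorial core the present paper transposes to Lie algebras. Measured against that, your soft reductions are correct and standard: the uniform bound on the nil index from finitely many $1$-orbits; the definability and stabilization of the power ideals $R^n$, yielding a definable idempotent ideal $I=R^{n_0}=I^2$; the definable annihilator (socle) series; and the equivalence of nilpotence with your key lemma that every nonzero $\omega$-categorical nilring has nonzero two-sided annihilator (equivalently, that no nonzero idempotent $\omega$-categorical nilring exists). The additive-group digression is superfluous but harmless: $\omega$-categorical structures are uniformly locally finite, so $(R,+)$ has bounded exponent outright and the divisible part you guard against is trivial.

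The genuine gap is that the proposal stops exactly where Cherlin's theorem begins: your "key lemma" is not a lemma, it \emph{is} the theorem, and everything before it is bookkeeping with orbits. Your sketched descent cannot be repaired as stated. The elements $x^{N-1}$ annihilate only $x$, not the ring; quotienting by the ideal they generate does lower the nil index, but one must then handle that ideal itself, whose index is not lowered, and since $I=I^2$ the induction circles straight back to the idempotent situation --- the same obstruction that kills Nagata--Higman when the characteristic is at most $N$, as you note yourself. No formal algebraic descent can work here, because non-nilpotent (even idempotent) nil rings of bounded index do exist in small characteristic; $\omega$-categoricity must enter in an essential combinatorial way. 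What Cherlin actually does is a \emph{coding} argument: for each $k$ he produces conditions on a tuple of fixed length such that non-nilpotency forces realizations at every level $k$, while no tuple can satisfy the conditions at two distinct levels $k\neq l$; realizations at different levels therefore lie in different orbits on a fixed finite power (triples in the commutative case, $6$-tuples in general), contradicting $\omega$-categoricity. This is precisely the mechanism the present paper emulates for $3$-Engel Lie algebras of characteristic $5$: the formulas $\phi_k,\psi_k$ and Lemmas \ref{lm:3engel-inconsistent} and \ref{lm:3engel-consistent}, which drive the proof of Theorem \ref{thm:finite4typesEngel3char5nilpotent}. Nothing in your proposal supplies a substitute for that coding step, and without it the argument proves nothing beyond the standard reductions.
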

Cherlin's result is in fact stronger, and can be stated in terms of the number of orbits in a particular cartesian power of the algebra. 
\begin{fact}
    Let $A$ be a nilring of bounded nilexponent. 
    \begin{itemize}
        \item If $A$ is commutative and the number of orbits in the action of $\Aut(A)$ on $A\times A\times A$ is finite, then $A$ is nilpotent.
        \item If the number of orbits in the action of $\Aut(A)$ on $A\times A\times A\times A\times A\times A$ is finite, then $A$ is nilpotent.
    \end{itemize}
\end{fact}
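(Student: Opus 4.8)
The plan is to turn the finiteness of orbits into a \emph{finite} descending chain of characteristic ideals, and then to prove that this chain terminates at $0$ rather than merely stabilizing. Write $A^{(1)}=A$ and let $A^{(k)}$ be the (two-sided) ideal spanned by all products of length at least $k$, so that $A^{(1)}\supseteq A^{(2)}\supseteq\cdots$ and $A$ is nilpotent exactly when $A^{(m)}=0$ for some $m$. Each $A^{(k)}$ is \emph{characteristic}: any $\varphi\in\Aut(A)$ satisfies $\varphi(x_1\cdots x_k)=\varphi(x_1)\cdots\varphi(x_k)$, so it maps products of length $k$ to products of length $k$ and fixes $A^{(k)}$ setwise.

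First I would record the observation that locates the difficulty in small characteristic. If $A$ is commutative and the residue characteristic is $0$ or exceeds $n$, then polarizing $x^n=0$ gives $n!\,x_1\cdots x_n=0$, hence $x_1\cdots x_n=0$ and $A^{(n)}=0$ with no orbit hypothesis at all. Thus the orbit count can only be essential in characteristic $p\le n$, where $x^n=0$ fails to linearize and genuinely non-nilpotent bounded-exponent nilrings do exist; this matches the intended application to characteristic $5$.

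Next comes the stabilization step, which needs only a weak consequence of the hypothesis. Projecting the action of $\Aut(A)$ on $A^3$ (resp.\ $A^6$) onto one coordinate shows that $\Aut(A)$ has only finitely many orbits $O_1,\dots,O_r$ on $A$ itself. A characteristic ideal, being $\Aut(A)$-invariant, is a union of some of these orbits, so there are at most $2^r$ characteristic ideals. The chain $(A^{(k)})_k$ must therefore stabilize, say $A^{(m)}=A^{(m+1)}=\cdots=:I$ with $m\le 2^r$. Splitting a product of length $2m$ into halves gives $I=A^{(2m)}\subseteq I^2\subseteq A^{(2m)}=I$, so $I=I^2$, and inductively $I=I^k$ for every $k$: the stable ideal is idempotent.

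Everything then reduces to the key claim, which I expect to be the main obstacle: a nonzero idempotent ideal $I=I^2$ cannot survive in a bounded-exponent nilring with finitely many orbits on $A^3$ (resp.\ $A^6$). This is precisely the step where the bounded nilexponent and the refined orbit count must be used together, linearization being unavailable. The mechanism I would pursue is to pick $0\neq a\in I$; since $I=I^k$ for all $k$, $a$ is a sum of products of elements of $I$ of arbitrarily large length $k$. Along such a long word $x_1\cdots x_k$ I would track the orbit-types of the partial products $p_j=x_1\cdots x_j$ together with a probe factor — this is exactly what the extra coordinate of $A^3$ supplies, a partial product tested against one further element — and use finiteness of orbits on triples to force, by pigeonhole, a repeated configuration whose periodicity conflicts with the length bound imposed by the nilexponent, collapsing $a$ to $0$ and hence $I$ to $0$. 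In the non-commutative case the order of the factors can no longer be ignored, so one must control a partial product together with its reversed companion simultaneously, which is what I would expect to double the required three coordinates to six. Making this pigeonhole argument rigorous, and verifying that the repeated configuration genuinely yields a vanishing product rather than merely an equality of orbits, is the delicate point on which the whole proof turns.
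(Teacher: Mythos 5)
Your preparatory reduction is correct but it is the easy half, and it uses far less than the hypothesis: the passage from finitely many orbits on $A$ (not on triples) to finitely many characteristic ideals, to stabilization of the chain $A^{(k)}$, to an idempotent ideal $I=I^2$, is standard. The entire content of Cherlin's theorem lies in the ``key claim'' you leave open, and the mechanism you sketch for it would fail as stated, for two reasons. First, the bounded nilexponent imposes no length bound on products of \emph{distinct} elements: there exist bounded-nilexponent nilrings with nonzero products of every length (e.g.\ a direct sum $\bigoplus_m N_m$ of nilpotent rings of unbounded class and fixed nilexponent $n$), so the non-vanishing of arbitrarily long words is not, by itself, in tension with anything; any contradiction must come entirely from the orbit count. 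Second, pigeonhole on orbit types of partial products does not produce such a contradiction: if $(p_i,u,v)$ and $(p_j,u',v')$ lie in the same $\Aut(A)$-orbit, you only obtain an automorphism $\sigma$ with $\sigma(p_i)=p_j$, and $\sigma$ need not fix the remaining factors of the word, so these automorphisms cannot be chained into an equality of elements, let alone a vanishing statement. ``Periodicity'' of orbit types along a long word occurs in \emph{every} structure with finitely many orbits, nilpotent or not, so it cannot be the engine of the proof.

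What is actually needed is the reverse of your pigeonhole, and it is what Cherlin's proof supplies (the Fact is quoted in this paper from Cherlin's work, not reproved; but the paper's own Theorem \ref{thm:finite4typesEngel3char5nilpotent} emulates the same strategy for Lie algebras). One constructs, for each $k$, an $\Aut(A)$-invariant set $P_k$ inside a \emph{fixed} finite power $A^3$ (resp.\ $A^6$) such that (i) $P_k\neq\emptyset$ whenever some product of length roughly $k$ is nonzero, and (ii) $P_k\cap P_l=\emptyset$ for $k\neq l$. Then failure of nilpotency makes the $P_k$ infinitely many pairwise disjoint nonempty invariant sets, contradicting the finiteness of orbits. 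Condition (ii) is the delicate design problem: the configurations must be engineered so that they \emph{cannot} repeat across different lengths, and this is exactly where the specific algebra enters --- in this paper via the identity $[x,y]^2=-x^2y^2$ and the incompatible formulas $\phi_k,\psi_l$ of Lemmas \ref{lm:3engel-inconsistent} and \ref{lm:3engel-consistent}, in Cherlin's associative setting via his own coding. Your proposal correctly isolates where the difficulty sits, but what it leaves unproven is the theorem; the idempotent-ideal reduction, while true, is not even the route the coding argument takes (both Cherlin and this paper instead prove directly a uniform vanishing identity and then deduce nilpotency from it).
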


The intent of this paper is twofold. First, we review the available literature in order to situate Wilson's conjecture and its analogs for Lie/associative algebras within the current knowledge on locally nilpotent groups and Lie algebras. For instance, Wilson's conjectures for groups and Lie algebras relates to the solution to the restricted Burnside problem and is asymptotically true in a sense that will be made precise later. 

A Lie algebra is \textit{$n$-Engel} if it satisfies the identity 
\[
0 = [\ldots[[a,\underbrace{b],b],\ldots,b}_\text{$n$ times}].
\]
We will see that Wilson's conjecture for Lie algebras reduces to proving that $\omega$-categorical $n$-Engel Lie algebras are nilpotent, for all $n$. A classical result of Higgins \cite{higginsEngel} states that $3$-Engel Lie algebras of characteristic $p\neq 2,5$ are nilpotent. The case of characteristic $5$ is well-known to be pathological and linked to the Burnside problem for groups of exponent $5$. The second goal of this paper is to prove that $\omega$-categorical $3$-Engel Lie algebras over $\F_5$ are nilpotent. 
\begin{theorem*}
    Let $L$ be a $3$-Engel Lie algebra over a field of characteristic $5$. If there are finitely many orbits in the action of $\Aut(L)$ on $L\times L\times L\times L$, then $L$ is nilpotent.
\end{theorem*}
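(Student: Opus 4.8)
The plan is to reduce the statement to Cherlin's nilring theorem (Fact~\ref{fact:cherlinomegacatnilringarenilpotent}) by passing from $L$ to an associative algebra of operators, while using the special structure of $3$-Engel Lie algebras in characteristic $5$ to secure the hypotheses of that theorem. First I would record the cheap consequences of the assumptions. The identity $[x,y^3]=0$ says exactly that $(\ad a)^3 = 0$ for every $a \in L$, so every inner derivation is nilpotent of index at most $3$; and since a $3$-Engel Lie algebra is locally nilpotent, every finitely generated subalgebra is nilpotent, hence finite-dimensional over $\F_5$, so $L$ is locally finite. Finitely many $\Aut(L)$-orbits on $L^4$ gives finitely many orbits on $L^k$ for $k\le 4$, and each term $\gamma_i(L)$ of the lower central series, the ideal $\Sand(L)$ of sandwich elements, and the upper central series are all $\Aut(L)$-invariant. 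Working over the prime field $\F_5$ costs nothing, since the relevant series and the linearized Engel identity are $\F_5$-linear.

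Next I would linearize. In characteristic $\ne 2,3$ the identity $[x,y^3]=0$ polarizes to the multilinear relation $\sum_{\sigma \in S_3}[x,y_{\sigma(1)},y_{\sigma(2)},y_{\sigma(3)}]=0$ (left-normed brackets), and Higgins' analysis shows that away from characteristic $5$ these relations already force nilpotency of class at most $4$. The characteristic-$5$ obstruction is precisely what survives this analysis, and the point where the known structure theory of $3$-Engel Lie algebras over $\F_5$ must enter: I would isolate it as a characteristic ideal $I \lteq L$, built out of the sandwich elements, such that $L/I$ is nilpotent of absolutely bounded class. This reduces the whole problem to showing that the single ideal $I$ is nilpotent.

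Then I would set up the passage to the associative side and invoke Cherlin. Let $A \subseteq \End_{\F_5}(L)$ be the multiplication algebra generated by $\{\ad a : a \in L\}$. Nilpotency of $L$ is equivalent to nilpotency of $A$, because a length-$(k{-}1)$ product $\ad a_1 \cdots \ad a_{k-1}$ evaluated at $x$ is the bracket $[x,a_{k-1},\ldots,a_1]$, so $A^{c}=0$ exactly when $\gamma_{c+1}(L)=0$; moreover this dictionary shows that orbit data on $L^4$ controls all products in $A$ of bounded length. The heart of the argument is to check that $A$ is a nilring: using local nilpotency together with $(\ad a)^3=0$, every element of $A$ acts locally nilpotently, and I would upgrade this to uniform nilpotency by a compactness argument across the finitely many orbits. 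With $A$ nil and $\omega$-categorical, Fact~\ref{fact:cherlinomegacatnilringarenilpotent} (in its refined orbit form, for the action on $A^6$) yields that $A$ is nilpotent, and hence so is $L$.

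The main obstacle is exactly the characteristic-$5$ pathology in this last step. Having finitely many orbits on a vector space does \emph{not} bound its dimension, so the class bound cannot be read off from orbit counting directly but must be extracted from the bracket, and for the free $3$-Engel algebra over $\F_5$ the sandwich ideal $I$ genuinely supports an infinite, non-nilpotent graded object. The hard part will therefore be to convert non-nilpotency of $I$ into an $\Aut(L)$-invariant witness of unbounded complexity --- an infinite strictly descending chain of characteristic ideals, or a sequence of $4$-tuples lying in pairwise distinct orbits --- thereby contradicting $\omega$-categoricity. Matching the arity of Cherlin's hypothesis against the data we are given (orbits on $L^4$ versus orbits on $A^6$) is the accompanying technical crux, and it is precisely here that the bounded-class reduction modulo $I$ and the sandwich description of $I$ must be deployed to keep the relevant products within reach of the four-variable orbit information.
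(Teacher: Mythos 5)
Your plan founders at its central step: the reduction to Cherlin's theorem. To invoke Fact~\ref{fact:cherlinomegacatnilringarenilpotent} you need the multiplication algebra $A\subseteq\End_{\F_5}(L)$ to be $\omega$-categorical (or, for the refined orbit form, to be nil of \emph{bounded} nilexponent with finitely many $\Aut(A)$-orbits on $A^6$), and neither hypothesis is available. Finitely many orbits on $L^4$ is a condition about tuples from $L$; elements of $A$ are sums of products $\ad a_{1}\cdots \ad a_{m}$ of \emph{unbounded} length, so no fixed-arity orbit condition on $L$ controls tuples from $A$, and automorphisms of $A$ need not even arise from automorphisms of $L$. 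Indeed the paper states explicitly that $\omega$-categoricity is not preserved under the passage from a Lie algebra to its enveloping algebra --- this non-transfer is precisely why the argument cannot be outsourced to Cherlin and must instead be run inside $L$. Your claim that $A$ is nil is salvageable (uniform local nilpotency gives, for an element of $A$ built from $r$ generators, a nilpotency index depending on $r$), but this yields no uniform nilexponent, so even the refined form of Cherlin's theorem is out of reach.

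What your proposal correctly identifies but does not supply is the mechanism for ``converting non-nilpotency into an $\Aut(L)$-invariant witness of unbounded complexity'': that is exactly the content of the paper's proof, and it is where all the work lies. The paper proves the identity $[b,c]^2=-b^2c^2$ in $A(L)$ (Lemma~\ref{lm:crucialidentities}), uses it to build maps $f_a(x,y)=[a,[x,y]^2]$ satisfying an associativity law $f([x,y],z)=f(x,[y,z])$, and from these constructs two families of four-variable formulas $\phi_k,\psi_k$ such that (i) $\phi_k\wedge\psi_l$ is unsatisfiable whenever $l>k$ (Lemma~\ref{lm:3engel-inconsistent}), while (ii) a nonvanishing bracket $[a,b_1^2,\ldots,b_n^2]$ with $n=2k+1$ produces a realisation of $\phi_k\wedge\psi_k$ (Lemma~\ref{lm:3engel-consistent}). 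Finitely many orbits on $L^4$ then forces tuples realising $\Phi_k$ and $\Phi_l$ for $k\neq l$ into one orbit, a contradiction; the resulting identity $[a,b_1^2,\ldots,b_n^2]=0$, combined with the ideal $I=\Span_{\F}([a,b^2]\mid a,b\in L)$ whose quotient $L/I$ is $2$-Engel, finishes the proof (Theorem~\ref{thm:finite4typesEngel3char5nilpotent}). This is a Cherlin-\emph{style} coding carried out directly in $L$ via the characteristic-$5$ identity, not an application of Cherlin's theorem; without constructing such definable, fixed-arity witnesses, your outline has no way to bring the hypothesis on $L^4$ to bear, and the proof does not go through.
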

The proof relies on a crucial identity which holds for Lie elements in the enveloping algebra of $3$-Engel Lie algebras of characteristic $5$:
\[
[x,y]^2 = -x^2y^2.
\]
We use this identity to emulate a form of coding that appear in the original proof of Cherlin \cite{cherlinnilringI,cherlinnilringII}. This identity generalizes to the case of $n$-Engel Lie algebras by $[x,y]^{n-1} = \alpha y^{n-1}z^{n-1}$ (for some scalar $\alpha$), and would be quite useful for a general solution to Wilson's conjecture for Lie algebras. As it turns out, this identity is exceptional in $n$-Engel Lie algebras. Using computer algebra, Michael Vaughan-Lee checked that it holds in all $3$-Engel Lie algebras of odd characteristic (if $p\neq 2,5$ those are known to be nilpotent) and for $4$-Engel Lie algebras of characteristic $p>5$ (also known to be nilpotent). However, the identity does not hold for $4$-Engel Lie algebras of characteristic $5$ (not nilpotent in general) and for $n$-Engel Lie algebra for $n>4$.

As we have ruled out the characteristic $5$ case, Wilson's conjecture holds for all $\omega$-categorical $3$-Engel Lie algebras in characteristic $\neq 2$. As a corollary, we recover a partial result for $\omega$-categorical groups. As will be clear later (Fact \ref{fact:fromlitterature}), $\omega$-categorical $p$-groups are \textit{uniformly locally nilpotent}, i.e. for each such group there exists $f:\N\to \N$ such that each $r$-generated subgroup is nilpotent of class at most $f(r)$.

\begin{corollary*}
Every $\omega$-categorical group of exponent $p\geq 5$ in which every $3$-generated subgroup is nilpotent of class $\leq 3$ is nilpotent.
\end{corollary*}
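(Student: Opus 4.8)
The plan is to transport the statement across the arrow $\star_1$ of the strategy: from the group $G$ I would manufacture a Lie algebra $L$ over $\F_p$ to which the main theorem applies, and then pull nilpotency back along the Lazard correspondence (see \ref{section:lazardcorrespondence}). First observe that the hypothesis already forces $G$ to be $3$-Engel as a group: any two elements $x,y$ generate a subgroup which is in particular $3$-generated, hence nilpotent of class $\leq 3$, so the left-normed weight-$4$ commutator $[x,y,y,y]$ lies in $\gamma_4(\langle x,y\rangle)=1$ and therefore vanishes. By Heineken's theorem every $3$-Engel group is locally nilpotent, so $G$ is locally nilpotent; combined with $\omega$-categoricity this yields uniform local nilpotency (Fact \ref{fact:fromlitterature}).

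Next I would attach to $G$ a Lie algebra $L$ over $\F_p$ via the Lazard correspondence, where the numerical hypotheses are tuned precisely so that this is available: the relevant finitely generated subgroups are nilpotent of class $\leq 3 < 5 \leq p$, so the Baker--Campbell--Hausdorff formula and its inverse converge and define, on a common underlying set, mutually inverse group and Lie operations. Two features then need checking. The group condition transfers to the $3$-Engel identity on $L$: a $2$-generated Lie subalgebra $\langle a,b\rangle$ coincides, as a subset, with the subgroup it generates (each operation is a word in the other, in bounded class), and that subgroup is nilpotent of class $\leq 3$, so every bracket of weight $4$, in particular $[a,b,b,b]$, is zero. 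Moreover the correspondence is \emph{definable}: in bounded class and fixed characteristic the two operations are polynomial in one another, whence $\Aut(G)=\Aut(L)$ as permutation groups of the common set, and finiteness of the number of orbits of $\Aut(G)$ on $G^4$ passes to finiteness of the number of orbits of $\Aut(L)$ on $L^4$.

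It would then remain to apply the main theorem to $L$ in the case $p=5$, and Higgins' nilpotency result for $3$-Engel Lie algebras of characteristic $p\neq 2,5$ in the case $p>5$; in either case $L$ is nilpotent. Since the Lazard correspondence preserves nilpotency class, $G$ is nilpotent, which is the assertion.

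The hard part will be the construction of $L$ itself, and this is where I expect the real work to lie. Because $G$ is \emph{a priori} only locally nilpotent---nilpotency being exactly what we aim to prove---one cannot simply invoke a single global BCH formula on all of $G$, and the naive associated graded Lie ring $\bigoplus_i \gamma_i(G)/\gamma_{i+1}(G)$ only detects that the lower central series stabilizes, not that it reaches $1$. The delicate point is therefore to assemble the Lazard structures on the finitely generated (hence finite) subgroups into a genuinely $\omega$-categorical Lie algebra for which $G$ is nilpotent if and only if $L$ is, reconciling the class-$<p$ requirement of Lazard with the uniform local nilpotency of Fact \ref{fact:fromlitterature} and using the definability of BCH to keep the number of orbits finite. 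Making this passage simultaneously faithful and orbit-finite, despite the pathology of exponent $5$ that the main theorem is designed to tame, is the genuine obstacle.
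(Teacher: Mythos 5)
Your proposal is correct and takes essentially the same route as the paper: define an $\F_p$-Lie algebra structure $L$ on the underlying set of $G$ by the inverse Baker--Campbell--Hausdorff formula (the local Lazard correspondence of \ref{section:lazardcorrespondence}), use definability of the new operations to pass orbit-finiteness on $4$-tuples from $\Aut(G)$ to $\Aut(L)$, note that class $\leq 3$ for $2$-generated subgroups yields the $3$-Engel identity, and finish with Theorem \ref{thm:finite4typesEngel3char5nilpotent} for $p=5$ and Higgins' theorem for $p>5$. The ``genuine obstacle'' you describe in your last paragraph is not actually one: since $x+y$ and $[x,y]$ are given by fixed group words in $x,y$ (inverse BCH truncated at class $3$, whose denominators $2$ and $3$ are invertible modulo $p\geq 5$), the operations are defined globally by those words rather than assembled from pieces; the Lie algebra axioms involve at most three elements at a time and are therefore verified inside $3$-generated subgroups of class $<p$, where the classical Lazard correspondence applies; and nilpotency of $L$ pulls back to $G$ because every left-normed group commutator expands as a sum of Lie brackets of at least the same weight --- this is precisely the standard ``local Lazard correspondence'' that the paper itself invokes as a known tool, so your argument is already complete at the same level of rigor as the paper's.
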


\subsection*{Notations and conventions}
We use the left-normed commutator/bracket convention, which means that in a group or a Lie algebra, $ [\ldots[[a_1,a_2],a_3],\ldots,a_n]$ is denoted by $[a_1,\ldots, a_n]$. We denote by $[a,b^n]$ the term
\[[a,\underbrace{b,\ldots,b}_\text{$n$ times}].\]

\section{Extended introduction}

The goal of this section is to situate Wilson's conjecture (and its Lie/associative algebra analog) within the current literature. The main facts are as follows.
\begin{fact}\label{fact:fromlitterature}
The following are either standard or can be read off the existing literature.
    \begin{enumerate}
        \item An $\omega$-categorical Lie/associative algebra is uniformly locally finite and is over a field of positive characteristic.
        \item Wilson's conjecture for $\omega$-categorical Lie algebras is equivalent to proving that any $\omega$-categorical uniformly locally nilpotent/locally nilpotent/$n$-Engel/Engel Lie algebra is nilpotent.
        \item An $\omega$-categorical $p$-group is uniformly locally finite and of bounded exponent $p^m$. An $\omega$-categorical locally nilpotent group is $n$-Engel, for some $n\in \N$.
        \item \cite{Zelmanov88,Zelmanov1990onproblems} For all $n\in \N$ there exists $N\in \N$ such that for all $n\geq N$, every $n$-Engel $p$-group/Lie algebra over $\F_p$ is nilpotent.
        \item \cite{AbdollahiTraustason2002} Wilson's conjecture is true if for every given $n,p$ and $r$ such that $p^{r-1}< n \leq p^r$, every characteristically simple $\omega$-categorical $n$-Engel group of exponent dividing $p^r$ is abelian.
        \item Under Wilson conjecture for Lie algebras, every $\omega$-categorical $p$-group in which every $3$-generated subgroup is nilpotent of class $<p$ is nilpotent.
    \end{enumerate}
\end{fact}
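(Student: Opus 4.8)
The natural tool here is the Lazard correspondence, which for a fixed prime $p$ is a class-preserving equivalence between nilpotent groups of class $<p$ and nilpotent Lie rings of class $<p$, implemented by the Baker--Campbell--Hausdorff formula and its inverse (all denominators occurring up to class $p-1$ are coprime to $p$). Since $G$ is not assumed to have bounded class globally, the plan is to use this correspondence \emph{locally} and glue the pieces into a single global Lie algebra structure on the underlying set of $G$. Concretely, I would define a ``Lie addition'' $g\oplus h$ and a ``Lie bracket'' $[g,h]_L$, each computed inside the subgroup $\langle g,h\rangle$ by the (inverse) Baker--Campbell--Hausdorff formula, and the scalar action of $\F_p$ by $g\mapsto g^k$. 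Because $G$ is uniformly locally finite (Fact~\ref{fact:fromlitterature}) and every $2$-generated subgroup is nilpotent of class $<p$, these operations are given by group words of bounded length, hence are definable without parameters. The crucial place where the hypothesis on \emph{three}-generated subgroups enters is the verification that $L=(G,\oplus,[\,\cdot\,,\cdot\,]_L)$ satisfies the Lie algebra axioms: associativity and commutativity of $\oplus$, bilinearity and alternation of the bracket, and the Jacobi identity are each identities in at most three variables, so each is to be checked inside a $3$-generated subgroup, where the local Lazard correspondence supplies a genuine Lie ring in which it holds. As the operations of $L$ are interdefinable with the group multiplication we get $\Aut(L)=\Aut(G)$, so $L$ is again $\omega$-categorical.

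Next I would observe that $L$ is $(p-1)$-Engel: the element $[a,b^{p-1}]_L$ is a left-normed bracket of weight $p$ lying in the Lie subalgebra attached to $\langle a,b\rangle$, which has class $\le p-1$, so it vanishes. Invoking Wilson's conjecture for Lie algebras in its $n$-Engel form (Fact~\ref{fact:fromlitterature}) then gives that $L$ is nilpotent, of some class $c$. Finally I would transfer nilpotency back to $G$ by an induction showing $\gamma_k(G)\seq\gamma_k(L)$ for all $k$: computing inside $2$-generated subgroups via Baker--Campbell--Hausdorff shows that every weight-$k$ group commutator lies in the Lie ideal $\gamma_k(L)$, and that $\gamma_k(L)$ is closed under the group multiplication (for $u,v\in\gamma_k(L)$ the product $u\cdot v=u\oplus v\oplus\tfrac12[u,v]_L\oplus\cdots$ has all correction terms in $\gamma_{2k}(L)\seq\gamma_k(L)$). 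Since $\gamma_{c+1}(L)=0$, this yields $\gamma_{c+1}(G)=1$, so $G$ is nilpotent of class $\le c$. The point is that this last step never requires $G$ itself to have class $<p$ — only the vanishing of $\gamma_{c+1}(L)$ is used — which is exactly what lets the argument escape the class restriction inherent in the correspondence.

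The main obstacle I anticipate is bookkeeping around the exponent. An $\omega$-categorical $p$-group has bounded exponent $p^m$ (Fact~\ref{fact:fromlitterature}), and when $m>1$ the construction produces a Lie ring over $\Z/p^m$ rather than a Lie algebra over $\F_p$, so the field hypothesis of Wilson's conjecture for Lie algebras is not immediately met. To bridge this I would filter $L$ by $pL\supseteq p^2L\supseteq\cdots\supseteq p^mL=0$ and run the Engel/nilpotency argument on each $\F_p$-layer $p^iL/p^{i+1}L$, which is interpretable in $L$ (hence $\omega$-categorical) and inherits the $(p-1)$-Engel law; the delicate point is then to reassemble nilpotency of the finitely many layers into nilpotency of $L$, using that $L$ is $(p-1)$-Engel to control the extensions. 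For the intended application (the Corollary, where $G$ has exponent $p\ge 5$ and the three-generated class bound is $\le 3<p$) this complication disappears entirely: $L$ is a $3$-Engel Lie algebra over $\F_p$, nilpotent by the main theorem when $p=5$ and by Higgins's theorem \cite{higginsEngel} when $p\ge 7$, and the transfer above then gives nilpotency of $G$ outright.
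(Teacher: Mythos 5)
Your proposal engages only item (6) of the fact. Items (1)--(3) you quote as inputs rather than prove (the paper proves them by the orbit-counting argument of its ``Uniformity'' subsection: the sets $S_k$ of $r$-tuples generating a substructure nilpotent of class $\leq k$ are $\Aut$-invariant, and $\omega$-categoricity leaves only finitely many distinct ones), and items (4)--(5) are literature citations (Zelmanov; Abdollahi--Traustason combined with Wilson's characteristically-simple reduction), so nothing is really lost there. For item (6) itself your route is the same as the paper's (the ``Lazard correspondence'' subsection): define Lie operations on the underlying set of $G$ by the inverse Baker--Campbell--Hausdorff formula computed inside finitely generated subgroups, observe that these operations are $0$-definable so the resulting $L$ is again $\omega$-categorical, note $L$ is $(p-1)$-Engel, apply Wilson's conjecture for Lie algebras, and transfer nilpotency back through BCH. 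Your write-up is more detailed than the paper's one-paragraph treatment: the verification of the Lie axioms inside $3$-generated subgroups, the Engel bound, and the $\gamma_k$-comparison for the transfer back are all correct and are left implicit in the paper.

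The one genuine incompleteness is the exponent issue that you yourself flag but do not resolve: an $\omega$-categorical $p$-group may have exponent $p^m$ with $m>1$, in which case Lazard produces a Lie ring of additive exponent $p^m$, not a Lie algebra over $\F_p$, so the conjecture (stated for algebras over fields) does not apply directly; since item (6) is asserted for arbitrary $\omega$-categorical $p$-groups, your proof as written only covers exponent $p$. You propose filtering by $p^iL$ and call the reassembly ``delicate,'' but it closes cheaply and you only need the top layer. The quotient $L/pL$ is interpretable in $G$ (so $\omega$-categorical), is a $(p-1)$-Engel Lie algebra over $\F_p$, hence under the conjecture is nilpotent of some class $c$, i.e. $\gamma_{c+1}(L)\seq pL$. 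Now $\gamma_N(L)$ is spanned by left-normed brackets and $[px,y]=p[x,y]$, so any left-normed bracket of weight $N\geq c+1$ equals $p$ times a left-normed bracket of weight $N-c$; iterating $m$ times gives $\gamma_{mc+1}(L)\seq p^mL=0$, so $L$ is nilpotent of class $\leq mc$, and your $\gamma_k(G)\seq\gamma_k(L)$ transfer finishes. It is worth noting that the paper's own proof elides this point entirely (it simply calls $L^G$ a ``Lie algebra''), so your version, completed as above, is the more rigorous of the two.
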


As this section has an expository flavor, the reader versed in the literature around the Burnside problems and Engel groups is invited to go directly to the next section, where the proof of the main result is given.

\subsection{Uniformity} An $\omega$-categorical locally nilpotent group, Lie algebra or associative algebra is, in fact, \textit{ uniformly locally nilpotent} in the sense that there exists a function $f :\N\to \N$ such that for each $r\in \N$, every $r$-generated substructure is nilpotent of class bounded by $f(r)$. In the language of varieties, the variety generated by the structure is locally nilpotent. To see this, consider a locally nilpotent $\omega$-categorical Lie algebra $L$, fix a number $r$ and for each $k\in \N$ let $S_k$ be the set of tuples $\vec a  = (a_1,\ldots,a_r)\in L^r$ such that $M(a_1,\ldots,a_r) = 0$ for all Lie monomials $M$ of length $\geq k$. By local nilpotency, we have
\[ L ^r = \bigcup_{k\in \N} S_k.\]
Note that each set $S_k$ is $\Aut(L)$-invariant, i.e. is a union of orbits in the action of $\Aut(L)$ on $L^r$. By $\omega$-categoricity, there are only finitely many orbits in this action hence there are only finitely many distinct $S_k$'s, say $S_{k_1},\ldots,S_{k_n}$. By setting $f(r) = \max\set{k_1,\ldots,k_n}$, every $r$-generated Lie subalgebra of $L$ is nilpotent of class $\leq f(r)$. A similar argument yields the analogous result for groups or for associative algebras.

Those kinds of argument often appear in the study of $\omega$-categorical structures. A similar argument yields that an $\omega$-categorical structure is \textit{uniformly locally finite}, that is, there exists $g:\N\to \N$ such that every $r$-generated substructure is of cardinality bounded by $g(r)$. In particular, $\omega$-categorical groups have bounded exponent and $\omega$-categorical Lie and associative algebras are algebras over a field of positive characteristic. Note that as finite $p$-groups are nilpotent, a $p$-group is (uniformly) locally finite if and only if it is (uniformly) locally nilpotent, hence Wilson's conjecture is equivalent to: every $\omega$-categorical $p$-group is nilpotent.

In light of the above, for any locally nilpotent $\omega$-categorical group $G$, there is a number $n$ such that for all $a,b\in G$
\[[a,\underbrace{b,\ldots,b}_\text{$n$ times}] = [a,b^n] = 1.\]
Such groups are called \textit{$n$-Engel groups}. Similarly, an $\omega$-categorical locally nilpotent Lie algebra $L$, is an \textit{$n$-Engel Lie algebra} in the sense that there is $n\in \N$ such that $[a,b^n] = 0$. As for an $\omega$-categorical associative algebra $A$, there is a number $n$ such that $a^n = 0$ for all $a\in A$, i.e. \textit{$A$ is nil of nilexponent $n$}. If the number $n$ depends on $(a,b)$ (respectively $a$), the resulting structure is called an \textit{Engel group}/ \textit{Lie algebra} (resp. \textit{nilalgebra}).

\subsection{Periodic groups and Engel Lie algebras} 

Recall that a group is \textit{periodic} if all its elements have finite order.
In 1902, William Burnside raised his famous problems. 

\begin{generalburnsideproblem}
    Is every finitely generated periodic group finite?
\end{generalburnsideproblem}

\begin{burnsideproblem}
Is every finitely generated group of bounded exponent finite? 
\end{burnsideproblem}

In 1964, Golod and Shafarevich \cite{GolodShafarevitch1964} constructed a counterexample to the general Burnside problem. As for the Burnside problem, it was disproved a few years later in 1968 by Novikov and Adian \cite{NovikovAdianI, NovikovAdianII, NovikovAdianIII}. For $r,n\in \N$, we let $B(r,n)$ denote the quotient of the free group on $r$ generators by the normal closure of the set of $n$-th powers. $B(r,n)$ is known as the \textit{(free) Burnside group of rank $r$ and exponent $n$}. As every $r$-generated group of exponent $n$ is a homomorphic image of $B(r,n)$, the Burnside problem is the question whether $B(r,n)$ is always finite. Nowadays, the problem consist in describing for which $r,n\in \N$ the group $B(r,n)$ is finite. For instance, $B(r,n)$ are known to be finite for $n\leq 6, n\neq 5$, whereas $B(r,n)$ are all infinite for $r\geq 2$ and $n\geq 8000$ \cite{Lysenok1996, Adian1979}. A recent result from Atkarskaya, Rips and Tent \cite{tent2024burnside} is that $B(r,n)$ is infinite for $n$ odd and greater than $557$. The pathological case of exponent $5$ remains open.


\begin{restrictedburnsideproblem}
    For fixed $r,n\in \N$, are there only finitely many finite $r$-generated groups of exponent $n$?
\end{restrictedburnsideproblem}

The restricted Burnside problem asks whether $B(r,n)$ has only finitely many \textit{finite} subgroups. Using a result of Hall and Higman \cite{HallHigman1956} (and the classification of finite simple groups), the restricted Burnside problem reduces to the case of $B(r,p^k)$ which was solved positively in 1959 by Kostrikin \cite{Kostrikin1959} for $B(r,p)$ and by Zelmanov \cite{Zelmanov1990,Zelmanov1991} in 1991 for the general case. Those are a perfect illustration of the use of Lie algebras for solving group-theoretic problems, so we will give some more details of the solution.

There are various ways of associating a Lie ring or a Lie algebra to a given group $G$. Consider the lower central series $(\gamma_i)_{i<\omega}$, for which $\gamma_i/\gamma_{i+1}$ are abelian groups and consider the formal direct product of these groups
\[L(G) := \bigoplus_{i<\omega} \gamma_i/\gamma_{i+1}.\]
The group law of $L(G)$ is abelian, denoted by $+$ and the group commutator $[a,b] = aba^{-1}b^{-1}$ extends partially via the rule $[a\gamma_i, b\gamma_j] = [a,b]\gamma_{i+j}$ and can be linearly extended to the whole of $L(G)$. The structure $(L,+,[.,.])$ thus obtained is a Lie ring. Finite $p$ groups are nilpotent, therefore finding an upper bound to the number of finite subgroups of $B(r,p^k)$ is equivalent to finding an upper bound to the nilpotency class of those finite subgroups. As it turns out, the restricted Burnside problem is equivalent to proving the following:
\begin{center}
    \textit{the associated Lie ring of $B(r,p^k)$ is nilpotent.}
\end{center}

Assume now that $G$ has exponent $p$, then each group $\gamma_i/\gamma_{i+1}$ is an elementary abelian $p$-group and thus $L = L(G)$ is a Lie algebra over $\F_p$. Magnus \cite{Magnus1950} showed that $L$ is $n$-Engel for some $n<p$, i.e. $L$ satisfies the identity $[x,y^n] = 0$. Kostrikin \cite{Kostrikin1959} proved that if $n<p$, every finitely generated $n$-Engel Lie algebra over $\F_p$ is nilpotent, equivalently, every $n$-Engel Lie algebra over $\F_p$ with $n<p$ is locally nilpotent, solving the restricted Burnside problem for groups of exponent $p$. The following problem was raised by Kostrikin \cite{kostrikinAroundBurnside}:
\begin{engelLAproblem}
Is every finitely generated $n$-Engel Lie algebra nilpotent?
\end{engelLAproblem}

For the general solution to the restricted Burnside problem, note that a variant of the construction above using the Zassenhaus central series (or the $p$-central series) allows us to associate a Lie algebra over $\F_p$ to any $p$-group. If the associated Lie ring (or Lie algebra) of $B(r,p^k)$ is not in general an $n$-Engel Lie algebra, it satisfies other (weaker) identities, which Zelmanov proved to be sufficient to obtain local nilpotency, thus solving the restricted Burnside problem. In the end, the results of Kostrikin and Zelmanov are really theorems about Lie algebras, and a byproduct of Zelmanov's proof yields a positive answer to Kostrikin's question.
\begin{theorem*}[Zelmanov, 1991]
    Every $n$-Engel Lie algebra is locally nilpotent. 
\end{theorem*}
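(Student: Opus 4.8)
The statement is equivalent to asserting that every finitely generated $n$-Engel Lie algebra over a field is nilpotent, since local nilpotency is a property of finitely generated subalgebras, and a finitely generated subalgebra together with its nilpotency behaviour is insensitive to extending scalars to a convenient field (the prime field, or its algebraic closure). So I would fix $L = \langle a_1, \ldots, a_r\rangle$ an $n$-Engel Lie algebra over a field $k$ and aim to bound its nilpotency class by a function of $n$ and $r$ alone.

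The organizing notion is that of a \emph{sandwich} (or thin) element: an element $c \in L$ with $\ad(c)^2 = 0$ and $\ad(c)\ad(x)\ad(c) = 0$ for all $x \in L$. The plan rests on two pillars. The first is the \textbf{Sandwich Theorem} of Kostrikin--Zelmanov: a Lie algebra over a field generated by finitely many sandwiches is nilpotent, of class bounded in terms of the number of generators; I would take this as the engine converting "enough sandwiches" into genuine nilpotency. The second is a production mechanism: a nonzero finitely generated $n$-Engel Lie algebra always contains a nonzero sandwich. Granting both, one concludes by a radical argument. A single sandwich generates a locally nilpotent ideal (via the Sandwich Theorem applied to the finite sets of sandwiches met inside any finitely generated subalgebra), so the sum $\rho(L)$ of all locally nilpotent ideals is itself locally nilpotent and contains every sandwich. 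The quotient $L/\rho(L)$ is then $n$-Engel with no nonzero sandwiches, which by the production step forces $L/\rho(L) = 0$; hence $L = \rho(L)$ is locally nilpotent, and being finitely generated, it is nilpotent.

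The sandwich-production step is the heart of the matter and the main obstacle. The Engel identity $\ad(x)^n = 0$ says that every $\ad(x)$ is nilpotent of bounded index, and it is itself a Lie polynomial identity, so $L$ is a PI Lie algebra that is ad-nil of bounded index. Kostrikin had shown, by direct linearization of the Engel identity, that sandwiches exist when $n < \chara k$, but those combinatorial manipulations degenerate in small characteristic (the pathology of $\chara k = 5$ for $n = 3$, central to this paper, is one shadow of this). Zelmanov's innovation, which I would follow, is to bring in the structure theory of Lie and Jordan (super)algebras satisfying a polynomial identity: one passes to the Grassmann envelope so that the partially linearized Engel identities become super-identities, in which odd ad-nilpotent elements contribute squares with sandwich-like annihilation, and then invokes the local-nilpotency and classification results for Jordan algebras (together with the PI theory for Lie algebras) to locate a nonzero sandwich in arbitrary characteristic. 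This is where essentially all of the difficulty lies; by comparison, the reductions of the first paragraph and the radical descent of the second are soft.
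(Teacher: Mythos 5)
The first thing to note is that the paper does not prove this statement at all: it appears in the expository section, quoted as Zelmanov's theorem with references to his papers on the restricted Burnside problem, and the paper explicitly describes it as a byproduct of that solution. So there is no proof in the paper to compare yours against line by line. That said, your outline is a faithful description of the architecture of Zelmanov's actual argument: the notion of sandwich element, the Kostrikin--Zelmanov Sandwich Theorem, the descent through the locally nilpotent radical, and the production of sandwiches via Jordan-theoretic and superalgebra methods are exactly the ingredients of the published proof.

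However, as a \emph{proof} your proposal has a genuine gap --- indeed it consists almost entirely of gap. Both pillars are invoked rather than proved. The Sandwich Theorem (a Lie algebra generated by finitely many sandwiches is nilpotent) is itself a major theorem of Kostrikin and Zelmanov, and the ``production mechanism'' --- every nonzero $n$-Engel Lie algebra contains a nonzero sandwich, in arbitrary characteristic --- is precisely the deep core of Zelmanov's work; your paragraph about Grassmann envelopes and the classification of Jordan (super)algebras names the relevant technology but carries out none of it, as you yourself concede. If citing these two results is permitted, one may as well cite the theorem being proved, so the proposal amounts to an accurate table of contents for Zelmanov's proof rather than a proof. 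Two smaller points. First, your reduction ``extend scalars to the algebraic closure'' is not innocent: the Engel identity is not multilinear, and non-multilinear identities need not survive scalar extension over a finite base field; the safe reduction is restriction of scalars to the prime field, which suffices because the bracket \emph{is} multilinear, so nilpotency of the prime-field subalgebra generated by given elements bounds the nilpotency class of the $k$-subalgebra they generate. Second, the radical descent silently uses that the sum of all locally nilpotent ideals is itself a locally nilpotent ideal (the Lie-algebra analogue of the Hirsch--Plotkin radical); this is standard but is an ingredient that needs to be stated and justified.
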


An application of compactness to the first order theory of $n$-Engel Lie algebras together with the theorem of Zelmanov (or simply by considering the free $n$-Engel Lie algebra) yields that $n$-Engel Lie algebras are in fact uniformly locally nilpotent. Note that in the case of $\omega$-categorical $n$-Engel Lie algebras, (uniform) local nilpotency follows from local finiteness using Engel's theorem: every finite-dimensional Engel Lie algebra is nilpotent.

\subsection{Global nilpotency of $n$-Engel Lie algebras} The Wilson conjecture is about proving not local but global nilpotency. An earlier and important result of Zelmanov is the following \cite{Zelmanov88}.
\begin{theorem*}[Zelmanov, 1988]
    Every $n$-Engel Lie algebra over a field of characteristic $0$ is nilpotent.
\end{theorem*}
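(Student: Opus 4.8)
The plan is to isolate the genuinely new content of the statement relative to the local nilpotency theorem already available. Because we work in characteristic $0$, linearising $[x,y^n]=0$ produces an equivalent multilinear identity $\sum_{\sigma\in\fS_n}[x,y_{\sigma(1)},\dots,y_{\sigma(n)}]=0$, so the variety of $n$-Engel Lie algebras is defined by multilinear identities, and ``nilpotent of class $c$'' is itself the single multilinear identity $[x_1,\dots,x_{c+1}]=0$. Hence global nilpotency is equivalent to this identity holding in the relatively free $n$-Engel Lie algebra $F$ on countably many generators. By local nilpotency, each $r$-generated relatively free algebra $F_r$ is nilpotent, of some class $f(r)$; the whole theorem therefore amounts to the uniformity $\sup_r f(r)<\infty$, i.e.\ the class must not grow with the number of generators. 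This is precisely the feature that fails in positive characteristic, where local nilpotency still holds but global nilpotency may not (as the $4$-Engel characteristic $5$ example shows), so the characteristic-$0$ hypothesis has to be used exactly at this point.

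The engine I would use is the theory of sandwich (absolute zero divisor) elements: an element $c$ with $(\ad c)^2=0$ and $\ad c\,\ad x\,\ad c=0$ for all $x$. First I would extract sandwiches from the Engel condition: multilinearising $(\ad a)^n=0$ and using that the relevant integer coefficients are invertible in characteristic $0$, one manufactures nonzero sandwiches inside any nonzero $n$-Engel Lie algebra. The Kostrikin--Zelmanov sandwich theorem then says that a Lie algebra generated by sandwiches is nilpotent, and iterating the extraction shows that such an algebra is built up from its sandwiches; this already recovers local nilpotency. The quantitative refinement I would aim for is a bound on the nilpotency class of the ideal generated by the sandwiches depending only on the Engel degree $n$, and not on how many sandwiches are required to generate it.

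Granting such a refinement, I would obtain the global bound by confining all the uncontrolled behaviour to finitely many steps of the lower central series: one arranges $k_0=k_0(n)$ so that the lower central term $\gamma_{k_0}(L)$ lies in the ideal generated by sandwiches, whence $\gamma_{k_0}(L)$ is nilpotent of class $\leq h(n)$ by the refined theorem, while $L/\gamma_{k_0}(L)$ is nilpotent of class $<k_0$ by construction; composing the two bounds yields a global nilpotency class $c=c(n)$ independent of the number of generators. The hard part will be the refined sandwich theorem itself, with a bound free of the number of sandwich generators --- this is the heart of Zelmanov's argument and is where I expect essentially all the difficulty to lie. Proving it seems to require the full structure theory of Lie and Jordan (super)algebras: passing through a Tits--Kantor--Koecher type construction to an associated Jordan algebra, invoking the classification of simple Jordan algebras satisfying a polynomial identity, and transferring the resulting PI bounds back to the Lie side. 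The reductions in the first two paragraphs are comparatively formal; it is this structural, characteristic-$0$-sensitive input that makes the theorem deep.
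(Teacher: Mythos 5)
First, a point of context: the paper contains no proof of this statement at all. It is Zelmanov's 1988 theorem, quoted with attribution \cite{Zelmanov88} and used downstream purely as a black box (the only argument in the paper involving it is the compactness deduction of asymptotic nilpotency for $n$-Engel Lie algebras in large positive characteristic). So your sketch can only be judged on its own merits, not against an in-paper argument.

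On those merits, your first paragraph is correct and a useful framing: in characteristic $0$ the Engel identity is equivalent to its full multilinearization (one direction needs the field to be infinite, the other the invertibility of $n!$), global nilpotency is equivalent to $[x_1,\dots,x_{c+1}]=0$ holding in the countably generated relatively free $n$-Engel algebra, and since local nilpotency in characteristic $0$ was already available (Kostrikin), the theorem is exactly the uniformity statement $\sup_r f(r)<\infty$. But from that point on the proposal is a plan, not a proof: all of the substance is deferred to two claims you never establish. Claim (i), your ``refined sandwich theorem,'' asserts that the ideal generated by all sandwiches is nilpotent of class bounded in terms of $n$ alone, independently of the number of sandwich generators; but the Kostrikin--Zelmanov sandwich theorem gives nilpotency of sandwich-generated algebras with bounds depending on the number of generators, and removing that dependence is precisely the uniformity problem identified in your first paragraph --- so (i) is a restatement of what is to be proved, not a tool for proving it. Claim (ii), that one can choose $k_0=k_0(n)$ with $\gamma_{k_0}(L)$ contained in the ideal generated by sandwiches, is asserted without any argument: Kostrikin-style extraction produces one nonzero sandwich inside a nonzero Engel algebra, but iterating this gives information one quotient at a time, and nothing you say forces a fixed lower central term, with index depending only on $n$, into the sandwich-generated ideal. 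You acknowledge yourself that ``essentially all the difficulty'' lies in these steps; as written, the gap in the proposal is coextensive with the theorem itself, even though the circle of ideas you point to (sandwiches, Jordan-theoretic structure theory) is indeed the one Zelmanov's actual proof lives in.
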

Let $\Delta$ be the two-sorted theory of $n$-Engel Lie algebras over a field of characteristic $0$, i.e. the field sort is axiomatised by the theory of fields and all sentences
\[\theta_p := \underbrace{1+1+\ldots+1}_\text{$p$ times} \neq 0\]
whereas the Lie algebra sort is axiomatised by the theory of Lie algebra with the extra sentence \[\forall x,y\ [x,y^n] = 0.\]
Being nilpotent of class $\leq c$ is expressible by a single sentence $\phi_c$, and Zelmanov theorem can be stated as follows:
\[\Delta \models \bigvee_{c\in \N}\phi_c\]
Compactness implies that a finite fragment of $\Delta$ implies a finite fragment of the right hand side. In particular, only finitely many of the $\theta_p$'s are necessary to imply finitely many of the $\phi_c$'s hence we may conclude the following well-known fact\footnote{See also \cite{VaughanLee2011LiemethodsEngelgroups} for a quick argument for the same result which does not use any logic. Traustason, wrote a constructive account of Zelmanov's proof, yielding explicit bounds for $N,c$, depending on $n$ \cite{TraustasonZelmanovchar01998}. }. 

\begin{fact}[Asymptotic nilpotency for $n$-Engel Lie algebras]
    For each $n\in \N$, there exists $N,c\in \N$ such that every $n$-Engel Lie algebra over a field of characteristic $p>N$ is nilpotent of class $\leq c$.
\end{fact}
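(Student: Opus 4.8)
The plan is to derive the statement from Zelmanov's 1988 theorem (every $n$-Engel Lie algebra over a field of characteristic $0$ is nilpotent) by a compactness argument carried out in the two-sorted setting, exactly along the lines sketched in the paragraph preceding the Fact. I would work with the theory $\Delta$ of $n$-Engel Lie algebras over characteristic-$0$ fields and record two purely syntactic observations first. Being nilpotent of class $\leq c$ is expressed by a single universal sentence,
\[\phi_c := \forall x_1\cdots\forall x_{c+1}\ [x_1,\ldots,x_{c+1}] = 0,\]
and for $c'\leq c$ one has $\phi_{c'}\models\phi_c$, so the family $\{\phi_c\}_{c\in\N}$ is increasing. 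These let a finite disjunction of $\phi$'s collapse to a single bound.

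The crucial point is that $\bigvee_c\phi_c$ is \emph{not} a first-order sentence, so one cannot apply compactness to ``$\Delta\models\bigvee_c\phi_c$'' directly. Instead I would negate and consider
\[T := \Delta \cup \{\neg\phi_c : c\in\N\}.\]
A model of $T$ would be an $n$-Engel Lie algebra over a field of characteristic $0$ that is nilpotent of no finite class, and Zelmanov's theorem says no such object exists; hence $T$ is inconsistent. By compactness some finite $T_0\subseteq T$ is inconsistent, and $T_0$ involves only finitely many characteristic axioms $\theta_{p_1},\ldots,\theta_{p_k}$ from $\Delta$ together with finitely many negations $\neg\phi_{c_1},\ldots,\neg\phi_{c_m}$.

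From this I would read off the bounds. Set $N := \max\{p_1,\ldots,p_k\}$ and $c := \max\{c_1,\ldots,c_m\}$. Inconsistency of $T_0$ means that the Lie algebra axioms together with the $n$-Engel identity and $\theta_{p_1}\wedge\cdots\wedge\theta_{p_k}$ entail $\phi_{c_1}\vee\cdots\vee\phi_{c_m}$, which by monotonicity of the $\phi$'s entails $\phi_c$. This is a valid first-order entailment, hence holds in \emph{all} models of the premises, regardless of characteristic. Finally, a field of characteristic $p>N$ satisfies every $\theta_{p_i}$ (since $0<p_i<p$ forces $p_i\cdot 1\neq 0$), so any $n$-Engel Lie algebra over such a field models all the consistent hypotheses and therefore satisfies $\phi_c$, i.e.\ is nilpotent of class $\leq c$, which is the desired conclusion.

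I do not expect a genuine mathematical obstacle, since all the content is packed into Zelmanov's theorem, which I am allowed to assume. The one point requiring real care—and the step I would flag as the ``main'' subtlety—is precisely the infinitary nature of $\bigvee_c\phi_c$: applying compactness naively to a statement of the form ``$\Delta$ implies an infinite disjunction'' is illegitimate, and the correct maneuver is to pass to the inconsistent theory $T$ obtained by adjoining all the negations $\neg\phi_c$. A secondary point worth stating explicitly, and what makes the finite-disjunction step work, is the first-order expressibility of $\phi_c$ together with the monotonicity $\phi_{c'}\models\phi_c$ for $c'\leq c$.
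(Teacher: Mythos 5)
Your proof is correct and follows essentially the same route as the paper: a compactness argument in the two-sorted theory $\Delta$ deducing the bounds $N,c$ from Zelmanov's characteristic-$0$ theorem. The only difference is one of rigor, not substance—where the paper informally asserts that ``a finite fragment of $\Delta$ implies a finite fragment of $\bigvee_c \phi_c$,'' you correctly formalize this by adjoining the negations $\neg\phi_c$ and extracting a finite inconsistent subtheory, which is exactly the standard way to make the paper's sketch precise.
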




For $n\leq 5$, we have a quite satisfactory picture of the situation. In the table below, the number at the intersection of the $n$-Engel row and the characteristic $p$ column is a bound on the nilpotency class of $n$-Engel Lie algebras of characteristic $p$. All bounds are sharp. If that number is $\infty$, we mean that $n$-Engel Lie algebras of characteristic $p$ are not (globally) nilpotent.

\begin{table}[h]
\centering
\begin{tblr}{
  cells = {c},
  vline{1-2,7} = {-}{},
  hline{1-2,6} = {-}{},
}
\diagbox{$n$-Engel Lie algebra~~}{Characteristic $p$~~} & $2$                     & $3$                     & $5$                     & $7$                     & $>7$  \\
$2$-Engel                                                  & $2$                     & $3$                     & $2$ & $2$                     & $2$\\
$3$-Engel                                                  & $\infty$ & $4$                     & $\infty$ & $4$                     & $4$  \\
$4$-Engel                                                  & $\infty$ & $\infty$ & $\infty$ & $7$                     & $7$  \\
$5$-Engel                                                  & $\infty$ & $\infty$ & $\infty$ & $\infty$ & $11$ 
\end{tblr}
\end{table}

The results for $2,3$ and $4$-Engel are due to Higgins \cite{higginsEngel}, Kostrikin \cite{kostrikinAroundBurnside} and Traustason \cite{TraustasonEngel3Engel41993, TraustasonEngel51996, Traustason3-4Engelcharacteristic21995} for exact bounds. The results for $5$-Engel Lie algebras are very recent and due to Vaughan-Lee \cite{vaughanlee20245engelliealgebras}.

Note the following negative result for global nilpotency  of $n$-Engel Lie algebras.
\begin{theorem*}[Razmyslov, 1971]
    For all $p\geq 5$ and $n = p-2$ there exists a non-solvable $n$-Engel Lie algebra over a field of characteristic $p$.
\end{theorem*}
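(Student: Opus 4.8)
The plan begins with the observation that the example is forced to be infinite-dimensional. By Engel's theorem (quoted above: every finite-dimensional Engel Lie algebra is nilpotent), any finite-dimensional Lie algebra in which each element is $\ad$-nilpotent is nilpotent, hence solvable. So the $(p-2)$-Engel condition $\ad(y)^{p-2}=0$ can only coexist with non-solvability in infinite dimension, and the problem becomes: construct an infinite-dimensional Lie algebra $L$ over $\F_p$ with $\ad(y)^{p-2}=0$ for all $y$, whose derived series $L\supseteq L'\supseteq L''\supseteq\cdots$ never reaches $0$.

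A first useful reduction is that, because $n=p-2<p$, the $n$-Engel identity $[x,y^{n}]=0$ is equivalent to its complete linearisation $\sum_{\sigma\in \fS_{n}}[x,y_{\sigma(1)},\ldots,y_{\sigma(n)}]=0$, since substituting $y_i=y$ multiplies the latter by $n!$, which is invertible modulo $p$. Hence it suffices to build $L$ so that this \emph{multilinear} identity holds, and on a graded algebra this is a check on homogeneous basis elements. My plan is therefore to exhibit $L$ explicitly as a $\Z$-graded Lie algebra $L=\bigoplus_{i\in\Z}L_i$ of operators on an infinite-dimensional space --- a modified Cartan/Witt-type algebra, or a Lie algebra of generic matrices in the spirit of Razmyslov's trace-identity machinery --- generated by a small set of \emph{sandwich} elements, i.e.\ elements $c$ with $\ad(c)^2=0$ and $\ad(c)\ad(x)\ad(c)=0$ for all $x$. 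Sandwiches are the basic source of low $\ad$-nilpotency in Kostrikin--Zelmanov theory, and a Lie algebra generated by infinitely many of them need not be nilpotent. Non-solvability I would read off the grading: arrange that $L$ is perfect ($L=[L,L]$), or that it surjects onto a graded (necessarily infinite-dimensional) simple Lie algebra $S$, so that $L^{(k)}$ surjects onto $S^{(k)}=S\neq 0$ for every $k$.

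The genuine difficulty --- and, I expect, the technical heart of Razmyslov's argument --- is to make the two requirements compatible: the generators must satisfy the degree-$(p-2)$ multilinear Engel identity, yet generate a non-solvable algebra. Verifying the identity on generators is where the precise value $n=p-2$ and the characteristic $p$ are indispensable: after expanding the symmetrised bracket on the chosen operators and collecting terms, the surviving coefficients are binomial/multinomial numbers (such as $\binom{p-2}{k}$) whose vanishing modulo $p$ is exactly what cancels the obstructions at length $p-2$ while permitting non-nilpotency. I would thus proceed by (i) fixing the operator realisation and computing the graded brackets of the generators, (ii) reducing the multilinear identity to a finite family of scalar congruences modulo $p$ via the sandwich relations together with Razmyslov-style Cayley--Hamilton/trace identities, and (iii) checking independently, from the explicit description, that the derived series does not terminate. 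Step (ii) --- controlling the characteristic-$p$ combinatorics so that no length-$(p-2)$ Engel obstruction survives while the algebra stays non-solvable --- is the main obstacle.
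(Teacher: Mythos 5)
The paper does not prove this statement at all: it is quoted as a known result, with a citation to Razmyslov's 1971 work, so there is no proof of record to compare against and your proposal must stand on its own. It does not. What you have written is a research plan, not a proof: the only steps you actually carry out are the two easy preliminaries (the example must be infinite-dimensional, by Engel's theorem; and the identity $[x,y^{p-2}]=0$ may be replaced by its full linearisation $\sum_{\sigma\in\fS_{p-2}}[x,y_{\sigma(1)},\ldots,y_{\sigma(p-2)}]=0$, since $(p-2)!$ is invertible mod $p$ and $\F_p$ has more than $p-2$ elements). Neither reduces the problem; together they merely restate it. Everything that constitutes the mathematical content of Razmyslov's theorem --- an explicit algebra, the verification of the Engel identity on it, and the proof that its derived series does not terminate --- is deferred to your steps (i)--(iii) and explicitly conceded as ``the main obstacle.'' Identifying where the difficulty lies is not the same as overcoming it.

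Beyond incompleteness, one of your two proposed routes to non-solvability is structurally suspect. Any homomorphic image of $L$ again satisfies the $(p-2)$-Engel identity, so the ``graded simple Lie algebra $S$'' you want $L$ to surject onto would itself be an $(p-2)$-Engel, hence (by Zelmanov's theorem, quoted in the paper) locally nilpotent, simple Lie algebra; producing such an object is at least as hard as the theorem you are trying to prove, so this route is circular. The sandwich heuristic points the same way: by Kostrikin--Zelmanov theory, subalgebras generated by sandwiches are locally nilpotent, so ``generated by sandwiches'' is a source of the Engel-type condition but gives you nothing whatsoever toward non-solvability --- which is precisely the hard half. The only viable route you name is perfectness of a nonzero $L$ (which does imply non-solvability), and that is indeed what an actual construction must deliver; but you exhibit no candidate algebra, no brackets, and no computation, so the proposal contains no verifiable progress toward the statement.
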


\subsection{Engel groups}

The study of $n$-Engel groups traces back to the 1901 paper of Burnside, where he stated his famous problems. 

\begin{generalengelgroupproblem}
    Is every finitely generated Engel group nilpotent?
\end{generalengelgroupproblem}

\begin{engelgroupproblem}
    Is every finitely generated $n$-Engel group nilpotent?
\end{engelgroupproblem}

If the general local nilpotency problem is known to fail \cite{Traustason2011EngelGroups}, the local nilpotency problem is still open to this day, and is a very active trend of research in group theory. The local nilpotency problem has been answered positively for $n = 2$ \cite{Levi1942}, $n = 3$ \cite{Heineken1961} and $n = 4$ \cite{Traustason19954engelgroups,HavasVaughanLee20054Engelgroups}.

If the local nilpotency problem for Engel Lie algebras is a generalization of the classical Engel's theorem, the local nilpotency problem for Engel groups aims at generalizing Zorn's theorem: \textit{every finite Engel group is nilpotent}. Also, since $\omega$-categorical groups are locally finite, Wilson's conjecture is equivalent to:
\begin{center}
    \textit{every $\omega$-categorical Engel group is nilpotent.}
\end{center}
The ``asymptotic" global nilpotency result for Engel Lie algebras has an equivalent for groups and also stems from Zelmanov's result on global nilpotency of Lie algebras of characteristic $0$ \cite{Zelmanov1990onproblems}.

\begin{theorem*}[Zelmanov, 1990]
    For each $n$ there is a finite set of primes $\pi$ such that any locally nilpotent $n$-Engel group without elements of $\pi$-torsion is nilpotent.
\end{theorem*}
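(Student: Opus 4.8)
The plan is to run the scheme $\star_1$ of the introduction in the service of a single uniform class bound, with Zelmanov's characteristic-$0$ theorem and the asymptotic Fact as the two engines and $\pi$ chosen to absorb the finitely many primes at which either the group-to-Lie-ring passage or the Lie-algebra bound degenerates. The first reduction is to finitely generated subgroups: since $G$ is locally nilpotent, every finitely generated $H\le G$ is nilpotent, and if some constant $c=c(n)$, independent of $H$, bounds the class of all such $H$, then the law $[x_1,\ldots,x_{c+1}]=1$ holds throughout $G$, because each instance lives in a finitely generated subgroup. It therefore suffices to bound the nilpotency class of an arbitrary finitely generated, $\pi$-torsion-free, nilpotent $n$-Engel group $H$ by a constant depending only on $n$.

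Next I would form the associated graded Lie ring $L=\bigoplus_{i\ge 1}L_i$ with $L_i=\gamma_i(H)/\gamma_{i+1}(H)$, and record two bookkeeping points. Because $H$ is finitely generated nilpotent with no $\pi$-torsion, its torsion subgroup is a finite $\pi'$-group, whence each $L_i$ is a finitely generated abelian group whose torsion is supported off $\pi$; so $L$ is $\pi$-torsion-free. Since $L$ is generated in degree $1$ we have $\gamma_k(L)=\bigoplus_{i\ge k}L_i$, so the class of $L$ equals the class of $H$, and it is enough to force $L_i=0$ for $i$ large. The group law $[a,b^n]=1$ transfers, on homogeneous elements, to $[\bar a,\bar b^{\,n}]=0$ for all homogeneous $\bar a,\bar b\in L$; the point I must then establish is that, after inverting the small primes, this homogeneous condition upgrades to a genuine Engel identity of some degree $m=m(n)$ on the whole Lie algebra.

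Granting this, let $N=N(m)$ be the threshold of the asymptotic Fact and set $\pi=\{\,p:p\le \max(m,N)\,\}$, a finite set. For a good prime $q\notin\pi$ the relevant factorials are invertible in $\F_q$, so $L/qL$ is an $m$-Engel Lie algebra over $\F_q$, hence nilpotent of class $\le c_1$; likewise $L\otimes\Q$ is an $m$-Engel Lie algebra over $\Q$, nilpotent of class $\le c_0$ by Zelmanov's $1988$ theorem. Put $c=\max(c_0,c_1)$ and fix $i>c$. From $L\otimes\Q$ we get $L_i\otimes\Q=0$, so the finitely generated group $L_i$ is finite with torsion off $\pi$; from $L/qL$ we get $L_i=qL_i$ for every good prime $q$. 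If $L_i\ne 0$ its order is divisible by some good prime $q$, yet $L_i=qL_i$ forces the $q$-primary part of the finite group $L_i$ to vanish, a contradiction. Hence $L_i=0$ for all $i>c$, so $H$, and therefore $G$, is nilpotent of class $\le c$.

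The main obstacle is the transfer step together with the bookkeeping of $\pi$. One must show rigorously that the group $n$-Engel law forces, modulo good primes and after tensoring with $\Q$, a true Engel identity of controlled degree $m(n)$ on the associated Lie algebra, so that the stated Engel-Lie-algebra theorems apply: polarising the homogeneous identity is lossless only in characteristic $0$ or $>m$, which is exactly why the small primes must be discarded, and the passage from ``all homogeneous elements'' to ``all elements'' is genuinely delicate since homogeneous elements do not form a subspace. Organising the finitely many primes lost to the collection-formula denominators and to the asymptotic threshold $N(m)$ into a single finite $\pi$ that simultaneously controls the torsion of the graded pieces $L_i$ is where the real difficulty — and the substance of Zelmanov's contribution — resides.
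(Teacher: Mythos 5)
First, a point of order: the paper does not prove this statement at all --- it is quoted as a black box from \cite{Zelmanov1990onproblems} and used only to deduce the asymptotic nilpotency fact for $\omega$-categorical $n$-Engel $p$-groups. So your proposal stands or falls on its own, and as it stands it is not a proof but a reduction scheme whose pivotal step is missing --- and that step carries essentially the full weight of the theorem. The step in question is your ``transfer'' claim: that after inverting the primes in $\pi$, the homogeneous identity $[\bar a,\bar b^{\,n}]=0$ in $L=\bigoplus_i\gamma_i(H)/\gamma_{i+1}(H)$ upgrades to a genuine $m(n)$-Engel identity on $L\otimes\Q$ and on $L/qL$. You flag this yourself as the main obstacle, but it cannot be deferred as a polishing step: the Engel word is not multilinear, homogeneous elements do not form a subspace, and it is known that the associated Lie ring of an $n$-Engel group need not satisfy any $n$-Engel identity --- the paper itself records exactly this caveat at the end of its subsection on Engel groups, citing \cite{Traustason2011EngelGroups,VaughanLee2011LiemethodsEngelgroups}. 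What transfers cheaply is strictly weaker: every homogeneous element of $L$ is ad-nilpotent of index $\le n$, and $L$ satisfies the full multilinearization of the Engel identity (which, being multilinear, does extend from homogeneous elements by linearity). Zelmanov's actual proof consists precisely in extracting nilpotency from these weaker hypotheses via his sandwich/PI machinery; conversely, the clean bounded-degree Engel identity you posit for all $\pi$-torsion-free nilpotent $n$-Engel $H$ already encodes the uniform class bound you are trying to prove. So the proposal assumes something at least as strong as its conclusion.

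Separately, one of your bookkeeping claims is false: lower central factors of a finitely generated nilpotent group without $\pi$-torsion need not have torsion supported off $\pi$. For any prime $p$, let $H\le H_3(\Q)$ be generated by the standard Heisenberg generators $x,y$ together with the central element $z$ satisfying $z^p=[x,y]$; then $H$ is finitely generated, torsion-free and of class $2$ (hence $2$-Engel), yet $H/\gamma_2(H)\cong\Z^2\times\Z/p\Z$. This error, unlike the transfer gap, is repairable: from $L_i\otimes\Q=0$ and $L_i=qL_i$ for all good primes $q$ you still conclude that each $L_i$ with $i>c$ is a finite abelian $\pi$-group; hence $\gamma_{c+1}(H)$, a finite iterated extension of such groups, is itself a finite $\pi$-group, and it must be trivial since otherwise it would contain an element of prime order belonging to $\pi$, contradicting the hypothesis on $H$. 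With that patch, your outer skeleton (reduce to finitely generated subgroups, bound the class of the graded Lie ring by combining Zelmanov's characteristic-$0$ theorem with the asymptotic fact modulo large primes) is sound, but the whole argument remains conditional on the transfer step, which is the theorem's actual content.
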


We conclude:

\begin{fact}[Asymptotic nilpotency for $\omega$-categorical $n$-Engel $p$-groups]
    For each $n\in \N$, there exist $N,c\in \N$ such that every $\omega$-categorical $n$-Engel $p$-group with $p>N$ is nilpotent of class $\leq c$.
\end{fact}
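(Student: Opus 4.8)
The plan is to combine the local finiteness forced by $\omega$-categoricity with Zelmanov's 1990 theorem, while insisting on a bound on the class that depends on $n$ alone. Fix $n$. First I would note that an $\omega$-categorical $p$-group $G$ is uniformly locally finite (Fact~\ref{fact:fromlitterature}), so each of its finitely generated subgroups is a finite $p$-group and hence nilpotent; thus $G$ is locally nilpotent, and it is $n$-Engel by hypothesis. Let $\pi = \pi(n)$ be the finite set of primes supplied by Zelmanov's 1990 theorem and set $N := \max\pi$. If $p > N$ then $p \notin \pi$, and as every element of a $p$-group has $p$-power order, $G$ has no nontrivial $\pi$-torsion. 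Consequently, for $p > N$, $G$ is a locally nilpotent $n$-Engel group without $\pi$-torsion, so Zelmanov's theorem gives that $G$ is nilpotent.

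This settles nilpotency of each individual $G$; the crux is to bound the class by a single $c = c(n)$, uniformly over all such $G$. For this I would invoke the effective form of Zelmanov's theorem: carried out through the associated Lie ring, its proof bounds the nilpotency class of an arbitrary locally nilpotent $n$-Engel group without $\pi$-torsion by a constant $c(n)$ depending only on $n$, in direct parallel with the way Zelmanov's $n$-Engel Lie algebra theorem bounds the class of the free $n$-Engel Lie algebra (which is what powers the asymptotic fact for Lie algebras established above). Granting this, every $\omega$-categorical $n$-Engel $p$-group with $p > N$ has class $\le c(n)$, and the statement holds with $N = \max\pi$ and $c = c(n)$.

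I expect the passage from \emph{nilpotent} to \emph{nilpotent of bounded class} to be the main obstacle, and the contrast with the Lie-algebra asymptotic fact is illuminating. There the uniform bound falls out of a clean compactness argument, because Zelmanov's characteristic-$0$ theorem asserts that \emph{every} $n$-Engel Lie algebra of characteristic $0$ is nilpotent; hence every model of $\Delta$ --- ultraproducts included --- is nilpotent and $\Delta \models \bigvee_{c}\phi_c$. No such unconditional statement is available for groups: an $n$-Engel group need not be locally nilpotent, the local nilpotency problem for Engel groups being open, so the first-order theory of $n$-Engel groups without $\pi$-torsion does \emph{not} entail $\bigvee_c \phi_c$, and a naive compactness argument stalls --- the ultraproduct of a supposed family of counterexamples is an $n$-Engel group that one cannot show to be locally nilpotent. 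This is precisely why $\omega$-categoricity, equivalently local finiteness, appears in the group statement but is absent from its Lie-algebra analogue: local finiteness restores local nilpotency for each group, whereas the uniformity of the class bound has to be drawn from the effective, Lie-theoretic content of Zelmanov's proof rather than from logic alone.
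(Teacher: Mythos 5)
Your proposal is correct and follows the same route as the paper: the paper's entire (implicit) proof is exactly your first paragraph --- $\omega$-categoricity gives uniform local finiteness, finite $p$-groups are nilpotent, so $G$ is a locally nilpotent $n$-Engel group with no $\pi$-torsion once $p>\max\pi$, and Zelmanov's 1990 theorem applies.

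The one point where you go beyond the paper is the uniform bound $c$, on which the paper is silent (it simply says ``we conclude''). You are right to flag that the compactness argument used for the Lie-algebra analogue breaks down here, and your diagnosis of why (local nilpotency is not first-order, and $n$-Engel groups are not known to be locally nilpotent) is exactly right. However, invoking an \emph{effective} form of Zelmanov's theorem is heavier machinery than needed: the uniform bound follows formally from the qualitative statement. Suppose for each $k$ there were an $\omega$-categorical $n$-Engel $p_k$-group $G_k$ with $p_k>N$ of nilpotency class $>k$. Form the restricted direct product $D=\bigoplus_k G_k$. Any finitely generated subgroup of $D$ lies in a finite subproduct $G_{k_1}\times\cdots\times G_{k_m}$, and its projections are finitely generated subgroups of locally nilpotent groups, hence nilpotent; a subgroup of a product of finitely many nilpotent groups is nilpotent, so $D$ is locally nilpotent. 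Moreover $D$ is $n$-Engel, and every element order is a product of primes $p_{k_i}\notin\pi$, so $D$ has no $\pi$-torsion. Zelmanov's theorem then makes $D$ nilpotent of some finite class $c$, contradicting that $D$ contains a copy of each $G_k$. So the bound $c=c(n)$ comes for free, with no appeal to the effectivity of Zelmanov's proof (though that effectivity is indeed available in the literature, so your route is not wrong --- just stronger than necessary, and resting on a citation you would need to pin down).
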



As in the case of Lie algebras, some values of $N,c$ are known for particular small values of $n$.

\begin{table}[h]  
\begin{tblr}{
  cells = {c},
  vline{1-2,7} = {-}{},
  hline{1-2,6} = {-}{},
}
\diagbox{$n$-Engel group~~}{$p$-group~~}                       & $2$                     & $3$                     & $5$                     & $7$                     & $>7$  \\
$2$-Engel                                                  & $3$                     & $3$                     & $3$                     & $3$                     & $3$\\
$3$-Engel                                                  & $\infty$                & $4$                     & $\infty$                & $4$                     & $4$  \\
$4$-Engel                                                  & $\infty$                & $\infty$                & $\infty$                & $7$                     & $7$  \\
$5$-Engel                                                  & $\infty$                & $\infty$                & $\infty$                & $\infty$                & $10$ 
\end{tblr}
\end{table}

The results for $2$-Engel groups are due to Hopkins \cite{Hopkins1929}, for $3$-Engel groups to Heineken \cite{Heineken1961} for $4$-Engel groups to Traustason \cite{Traustason19954engelgroups} and Havas and Vaughan-Lee \cite{HavasVaughanLee20054Engelgroups} and for $n = 5$ those results are due to Vaughan-Lee \cite{vaughanlee20245engelliealgebras}. Notice the surprising difference between the group and Lie algebra case for $5$-Engel and $p>7$.  

It should be noted that some of those results are obtained via Lie methods from the corresponding results on Lie algebras, although, as for the associated Lie ring of a group of bounded exponent, the associated Lie ring of an $n$-Engel group is not necessarily an $n$-Engel Lie ring, see \cite{Traustason2011EngelGroups,VaughanLee2011LiemethodsEngelgroups}.

A group is \textit{characteristically simple} if it has no nontrivial characteristic subgroup. An important result of Wilson is:
\begin{fact}[\cite{wilson1981}]
    Wilson's conjecture holds if every characteristically simple $\omega$-categorical $p$-group is abelian.
\end{fact}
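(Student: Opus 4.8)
The plan is to prove the conjecture in its equivalent form (recall that $p$-groups are locally nilpotent iff locally finite, and that $\omega$-categorical groups are locally finite): \emph{every $\omega$-categorical $p$-group $G$ is nilpotent}. The engine throughout is the soft observation that in an $\omega$-categorical structure every $\Aut$-invariant subset is $\emptyset$-definable, being a union of the finitely many orbits furnished by Ryll-Nardzewski. In particular every characteristic subgroup of $G$ is $\emptyset$-definable, and since there are only finitely many $\emptyset$-definable subsets of $G$, the group $G$ has only \emph{finitely many} characteristic subgroups. I would first reduce the whole statement to the single assertion that every nontrivial $\omega$-categorical $p$-group has nontrivial centre $Z(G)\neq 1$. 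Granting this, each quotient $G/Z_i$ in the upper central series is again a nontrivial $\omega$-categorical $p$-group (quotients by $\emptyset$-definable characteristic subgroups remain $\omega$-categorical) as long as $Z_i\neq G$, so $Z_{i+1}/Z_i=Z(G/Z_i)\neq 1$ and the upper central series is strictly increasing; being a chain of characteristic subgroups it must terminate at $G$ after finitely many steps, which is exactly nilpotency.

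It remains to produce a central element. I would choose a minimal nontrivial characteristic subgroup $N\lteq G$, which exists as there are finitely many. Since a subgroup characteristic in a characteristic subgroup is characteristic in the whole group, minimality forces $N$ to have no proper nontrivial characteristic subgroup, i.e. $N$ is characteristically simple. As $N$ is $\emptyset$-definable, its automorphism group (which contains the restrictions of $\Aut(G)$) still has finitely many orbits on each $N^k$, so $N$ is $\omega$-categorical, and it is a $p$-group; the hypothesis therefore applies and $N$ is abelian, hence, being characteristically simple, elementary abelian — an $\F_p[G]$-module under conjugation. Now both $[N,G]$ and $C_N(G)=N\cap Z(G)$ are characteristic subgroups of $G$ contained in $N$, so by minimality each is $1$ or $N$. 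If $C_N(G)=N$ or $[N,G]=1$ then $N\leq Z(G)$ and we are done. The only surviving configuration is $C_N(G)=1$ together with $[N,G]=N$, which iterates to $N\supseteq[N,G]\supseteq[N,G,G]\supseteq\cdots$ with every term equal to $N$; this is the case I must rule out.

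This last step is the heart of the matter and the place where $\omega$-categoricity must be used in an essential way: I need a fixed-point theorem to the effect that an $\omega$-categorical $p$-group cannot act on the nonzero $\F_p$-module $N$ without nonzero fixed vectors. The obstacle is genuine, since for arbitrary locally finite $p$-groups the conclusion fails — McLain's characteristically simple locally finite $p$-groups have trivial centre — so no purely group-theoretic argument can suffice. The route I would take is a compactness/orbit-counting argument exploiting the two features peculiar to the $\omega$-categorical setting: bounded exponent $p^m$, which makes every $g\in G$ act unipotently on $N$ with $(g-1)^{p^m}=0$ (writing $N$ additively so that $[v,g]$ becomes $(g-1)\cdot v$), and the finiteness of the number of $\Aut(G)$-orbits on $N$. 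In the bad case $C_N(G)=1$ lets one build, from any $0\neq v_0$, an infinite chain $v_{k+1}=[v_k,g_{k+1}]\neq 0$ of nested commutators; since all the $v_k$ lie in the finitely many orbits of $N$, one wants to combine pigeonhole on these orbits with the uniform nilpotency $(g-1)^{p^m}=0$ to force the chain to die, contradicting $[N,G]=N$ at every length. Turning this into an actual contradiction — showing that finitely many orbits together with bounded exponent preclude a globally fixed-point-free unipotent action, whereas each finite subgroup (being a finite $p$-group) does have a nonzero common fixed vector — is the main difficulty, and I expect it to require genuine model-theoretic input beyond the soft counting sketched here.
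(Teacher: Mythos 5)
First, a point of comparison: the paper does not prove this fact at all --- it is quoted from Wilson's 1981 paper --- so your attempt has to be judged on its own terms rather than against an argument in the text.

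The soft part of your proposal is correct and cleanly executed: in a countable $\omega$-categorical group every $\Aut(G)$-invariant subgroup is $\emptyset$-definable and there are only finitely many such subgroups; quotients by definable characteristic subgroups stay $\omega$-categorical, so it suffices to prove that every nontrivial $\omega$-categorical $p$-group has nontrivial centre; a minimal nontrivial characteristic subgroup $N$ is characteristically simple, $\omega$-categorical and a $p$-group, so the hypothesis makes it abelian, hence elementary abelian; and minimality of $N$ reduces everything to excluding the single configuration $[N,G]=N$, $C_N(G)=1$.

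The genuine gap is that you do not exclude that configuration, and you say so yourself. This is not a deferred technicality: it is the entire content of the statement. By this point the hypothesis has been completely spent (you used it only to make $N$ abelian), and what remains is a fixed-point theorem for $\omega$-categorical $p$-groups acting on a definable minimal elementary abelian section --- exactly the kind of ``local-to-global nilpotency'' assertion that Wilson's conjecture itself encapsulates. Your sketched mechanism does not work as stated: pigeonhole on the finitely many $\Aut(G)$-orbits of $N$ applied to a chain $v_{k+1}=[v_k,g_{k+1}]\neq 1$ only produces indices $i<j$ and an automorphism $\sigma$ with $\sigma(v_i)=v_j$, and there is no contradiction in that --- such self-similarity is precisely what a hypothetical non-nilpotent example would exhibit, and it is perfectly compatible with the uniform unipotence $(g-1)^{p^m}=0$. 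To turn orbit-finiteness into a contradiction one needs two families of \emph{definable} configurations, indexed by length, that are provably inconsistent with each other across distinct indices, so that pigeonhole on orbits forces a tuple to satisfy an inconsistent pair. That is Cherlin's coding device, and it is the engine of this very paper (Lemmas \ref{lm:3engel-inconsistent} and \ref{lm:3engel-consistent}, resting on the identity $[x,y]^2=-x^2y^2$), where it already requires substantial algebraic work in the much more restricted setting of $3$-Engel Lie algebras over $\F_5$. Nothing playing this role appears in your proposal.

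A smaller structural remark: by proving the fact via ``nontrivial centre'' alone you discard leverage that an induction on the length of a characteristic series would give you for free, namely that $G/N$ (hence $H=G/C_G(N)$) is already nilpotent in the bad case; one can then cut down to the abelian group $Z(H)$ acting on $N$ and use minimality again. But even with that reduction, the residual problem --- an abelian $\omega$-categorical $p$-group of bounded exponent acting fixed-point-freely on a definable elementary abelian section with $[N,H]=N$ --- is where the real difficulty lives, and your proposal leaves it untouched.
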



\begin{fact}[\cite{AbdollahiTraustason2002}]
    Let $n,p$ be given and $r$ such that $p^{r-1}<n\leq p^r$. Let $G$ be a locally finite $n$-Engel $p$-group.
    \begin{enumerate}
        \item If $p$ is odd, then $G^{p^r}$
is nilpotent of $n$-bounded class.
        \item If $p = 2$ then $(G^{2^r})^2$ is nilpotent of $n$-bounded class.
    \end{enumerate}
\end{fact}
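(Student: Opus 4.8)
The plan is to reduce to finite groups and then feed the problem into the Lie-theoretic machinery of Zelmanov, handling the finitely many exceptional primes separately. First, to prove that $G^{p^r}$ (resp. $(G^{2^r})^2$) is nilpotent of class $\leq c$ for some $c=c(n)$, it suffices to check that every $(c+1)$-fold commutator of generators is trivial; since each such commutator involves only finitely many $p^r$-th powers, it lies in $H^{p^r}$ for a finitely generated — hence, by local finiteness, finite — subgroup $H\leq G$. So we may assume $G$ is a finite $p$-group, in particular nilpotent, and the entire content is to make the class of the relevant verbal subgroup depend only on $n$, not on $|G|$.

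Second, I would invoke Zelmanov's theorem (stated above): there is a finite set of primes $\pi=\pi(n)$ such that every locally nilpotent $n$-Engel group without $\pi$-torsion is nilpotent of $n$-bounded class. If $p\notin\pi$, then our $p$-group $G$ has no $\pi$-torsion, so $G$ itself — and a fortiori the subgroup $G^{p^r}\leq G$ — is nilpotent of $n$-bounded class, and we are done. The substance of the statement is thus concentrated on the finitely many bad primes $p\in\pi$, namely those (such as $p=5$ for $n=3$) for which the $n$-Engel identity alone fails to force nilpotency, as witnessed by Razmyslov's theorem and the $\infty$ entries in the tables.

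Third, for these bad primes I would pass to the associated Lie ring $L=L(G)=\bigoplus_i \gamma_i/\gamma_{i+1}$, which is nilpotent of the same class as $G$, and aim to bound its class in terms of $n$ after extracting $p^r$-th powers. The honest difficulty, flagged in the introduction, is that $L$ need not be $n$-Engel: the group identity $[y,x^n]=1$ only linearizes to an Engel condition on $L$ of degree bounded in terms of $n$, and pushing the vanishing of Lie brackets back to commutator relations in $G$ introduces $p$-power torsion in the graded layers. Zelmanov's local nilpotency theorem for Engel Lie algebras then bounds the class of $L$ once such a bounded Engel condition is exhibited; the hypothesis $p^{r-1}<n\leq p^r$ should enter precisely to guarantee that $p^r\geq n$ is large enough to \emph{absorb} this torsion, so that the $p^r$-th power map annihilates the obstruction in each layer — which is what singles out $G^{p^r}$ rather than $G$, with the anomalous prime $p=2$ demanding the extra square.

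I expect the main obstacle to be exactly this last point: quantifying how the torsion in the associated Lie ring is controlled by the Engel degree, and proving that raising to the $p^r$-th power (and, for $p=2$, to a further square) suffices to nilpotize the verbal subgroup with a class bound independent of $p$ and $|G|$. Obtaining the \emph{sharp} power $p^r$ determined by $p^{r-1}<n\leq p^r$, rather than merely some $n$-bounded $p$-power, is the delicate combinatorial heart of the argument and would require the careful collection and commutator calculus underlying the Burns–Medvedev type results.
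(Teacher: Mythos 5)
This Fact is quoted in the paper from \cite{AbdollahiTraustason2002} without proof, so your attempt has to stand on its own --- and it does not. Your first two steps are correct but routine: the reduction to finite $p$-groups via local finiteness works (a $(c+1)$-fold commutator of elements of $G^{p^r}$ lives in $H^{p^r}$ for a finitely generated, hence finite, $H\leq G$), and Zelmanov's theorem disposes of all primes outside the finite exceptional set $\pi(n)$. But for $p\in\pi(n)$ --- which is precisely the case the theorem exists for, and the only case where the sharp exponent $p^r$ and the $p=2$ anomaly have any content --- you prove nothing: you describe what ``should'' happen and then defer to ``the careful collection and commutator calculus underlying the Burns--Medvedev type results.'' That deferred step is not a technicality; it is the entire theorem. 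What you have is a correct reduction plus an accurate description of where the difficulty lies, which is a plan, not a proof.

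There is also a conceptual problem with step three as sketched. You propose to bound, in terms of $n$ alone, the class of the lower-central-series Lie ring $L(G)=\bigoplus_i\gamma_i/\gamma_{i+1}$ after ``absorbing torsion.'' But $L(G)$ is nilpotent of exactly the same class as the finite $p$-group $G$, so any $n$-bounded class bound on $L(G)$ would force $G$ itself to be nilpotent of $n$-bounded class --- which is false for the bad primes, as witnessed by the $\infty$ entries in the paper's tables (e.g.\ $3$-Engel at $p=5$, $4$-Engel at $p\in\{2,3,5\}$): these come from locally finite $n$-Engel $p$-groups of unbounded class. So the Lie object whose class you bound cannot be the one attached to $G$; it must be attached to the verbal subgroup $G^{p^r}$, or to a filtration of $G$ adapted to it, and the mechanism by which the power map interacts with the graded layers is exactly where the hypothesis $p^{r-1}<n\leq p^r$ and the extra square at $p=2$ enter. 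Carrying this out --- via the Burns--Medvedev theorem (itself resting on Zelmanov's work) and a refinement of its exponent to the exact $p$-part $p^r$ --- is what \cite{AbdollahiTraustason2002} actually does, and none of it is reproduced or replaced in your proposal.
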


Trivially, a characteristically simple group of exponent $p^k$ satisfies $G^{p^{k-1}} = G$, hence the two previous facts together yield
\begin{fact}
    Wilson's conjecture holds if for every given $n,p$ and $r$ such that $p^{r-1}< n \leq p^r$, every characteristically simple $\omega$-categorical $n$-Engel group of exponent dividing $p^r$ is abelian.
\end{fact}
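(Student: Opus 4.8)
The plan is to deduce the statement purely formally from the two facts that immediately precede it, namely Wilson's reduction (\emph{Wilson's conjecture holds if every characteristically simple $\omega$-categorical $p$-group is abelian}) and the fact of Abdollahi and Traustason, together with the uniformity items of Fact~\ref{fact:fromlitterature}(3). By Wilson's reduction it suffices to show that, under the stated hypothesis, every characteristically simple $\omega$-categorical $p$-group $G$ is abelian, so I would fix such a $G$. Being $\omega$-categorical, $G$ is uniformly locally finite, hence locally nilpotent (finite $p$-groups are nilpotent), hence $n$-Engel for some $n\in\N$ by Fact~\ref{fact:fromlitterature}(3); moreover $G$ has bounded exponent $p^k$ for some $k$. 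I would then choose $r$ with $p^{r-1}<n\leq p^r$, which is exactly the $r$ attached to $n,p$ in the hypothesis.

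First I would treat the case $p$ odd. The subgroup $G^{p^r}$ is verbal, hence characteristic, so characteristic simplicity forces it to equal either $1$ or $G$. If $G^{p^r}=1$ then $G$ has exponent dividing $p^r$, so $G$ is a characteristically simple $\omega$-categorical $n$-Engel group of exponent dividing $p^r$ and the hypothesis gives that $G$ is abelian. If instead $G^{p^r}=G$, then the fact of Abdollahi and Traustason says $G^{p^r}=G$ is nilpotent; a nontrivial nilpotent group has nontrivial centre $Z(G)$, which is characteristic, so characteristic simplicity forces $Z(G)=G$ and $G$ is again abelian. This is precisely the role of the observation $G^{p^{k-1}}=G$: for a characteristically simple group of exponent $p^k$ every proper verbal power $G^{p^j}$ with $j<k$ collapses to $G$, so the dichotomy above is governed solely by whether $r\geq k$ or $r<k$.

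The case $p=2$ is the same argument applied to the characteristic subgroup $(G^{2^r})^2$, which is again $1$ or $G$. If it equals $G$, then Abdollahi--Traustason gives nilpotency and hence abelianness as before. If it equals $1$, then $G^{2^r}$ has exponent dividing $2$ and is therefore abelian; since $G^{2^r}$ is itself characteristic it is $1$ or $G$, so either $G^{2^r}=G$ (whence $G$ is abelian) or $G$ has exponent dividing $2^r$ and the hypothesis applies directly. In every case $G$ is abelian, and Wilson's reduction then yields the conjecture.

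I do not expect a genuine obstacle here: the only deep input is contained in the fact of Abdollahi and Traustason, which is exactly what bridges the gap between ``$n$-Engel of possibly large exponent'' and ``exponent dividing $p^r$'', while everything else is the formal dichotomy that \emph{a verbal (hence characteristic) subgroup of a characteristically simple group is trivial or everything}. The one point requiring a little care is the characteristic $2$ case, where the nilpotent verbal subgroup supplied by Abdollahi--Traustason is $(G^{2^r})^2$ rather than $G^{2^r}$, forcing the extra layer above; and one must remember to invoke Fact~\ref{fact:fromlitterature}(3) at the outset to know that an arbitrary $\omega$-categorical $p$-group is $n$-Engel of bounded exponent, so that both the hypothesis and Abdollahi--Traustason are applicable.
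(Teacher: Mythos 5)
Your proof is correct and follows essentially the same route as the paper, which compresses the entire argument into the observation that a characteristically simple group of exponent $p^k$ satisfies $G^{p^{k-1}}=G$ and then invokes the two preceding facts (Wilson's reduction and Abdollahi--Traustason). Your dichotomy on the characteristic subgroups $G^{p^r}$ (resp. $(G^{2^r})^2$) being $1$ or $G$ is a fleshed-out version of exactly that observation, with the $p=2$ case spelled out where the paper leaves it implicit.
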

In particular, for $(p-1)$-Engel $p$-groups, Wilson's conjecture reduces to the case of exponent $p$, for which the associated Lie ring is a $(p-1)$-Engel Lie algebra over $\F_p$.

\subsection{Lazard correspondence}\label{section:lazardcorrespondence} The functor which associates to an $\omega$-categorical $p$-group an associated Lie ring/algebra $L(G)$ does not preserve $\omega$-categoricity in general. Hence, a priori, solving Wilson's conjecture for Lie algebras does not solve Wilson conjecture for groups. However, as any $\omega$-categorical $p$-groups $G$ is uniformly locally nilpotent, there exists $k$ such that every $3$-generated subgroup of $G$ is nilpotent of class $\leq k$. If $k<p$, the local version of the Lazard correspondence allows to define a locally nilpotent Lie algebra structure $L^G$ on $G$ via the (inverse) Baker-Campbell-Hausdorff formula. The operations ``$+$" and ``$[.,.]$" are first-order definable in the structure $G$. As $\omega$-categoricity is a property of the first-order theory of $G$, the Lie algebra $L^G$ is again $\omega$-categorical hence, under Wilson's conjecture for Lie algebra, $L^G$ is nilpotent. By the Baker-Campbell-Hausdorff formula, $G$ is nilpotent. We have proved the following fact.

\begin{fact}(Under Wilson conjecture for Lie algebras)
    Every $\omega$-categorical $p$-group in which every $3$-generated subgroup is nilpotent of class $<p$ is nilpotent.
\end{fact}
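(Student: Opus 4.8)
The plan is to move the problem from the group $G$ to an associated Lie algebra by a first-order definable instance of the Lazard correspondence, and then feed the result into the assumed Wilson conjecture for Lie algebras. First I would upgrade the hypothesis to a uniform one. By the discussion of uniform local nilpotency above (see Fact~\ref{fact:fromlitterature}), an $\omega$-categorical $p$-group is uniformly locally nilpotent, so the assumption that every $3$-generated subgroup is nilpotent of class $<p$ is automatically witnessed by a single integer: there is $k<p$ with every $3$-generated subgroup of $G$ nilpotent of class $\le k$. This global bound is the decisive input, because it lets the local correspondence be encoded by \emph{fixed} group-words rather than by words whose length grows with the subgroup.

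Next I would use the inverse Baker--Campbell--Hausdorff (Lazard) formulas to put an $\F_p$-Lie algebra structure $L^G$ on the underlying set of $G$. For $x,y$ lying in a subgroup of class $\le k<p$, the sum $x+y$ and the bracket $[x,y]_L$ are given by explicit group-words in $x,y$ and their group-commutators, with rational coefficients whose denominators involve only primes $<p$ and are therefore invertible modulo $p$. Since all commutators of weight $>k$ vanish in the relevant $2$- and $3$-generated subgroups, these words are finite and of bounded length, so $+$ and $[\,\cdot\,,\cdot\,]_L$ are given by quantifier-free formulas in the group language, uniformly over all of $G$; the Lie algebra axioms (bilinearity, anticommutativity, Jacobi) are computations inside $3$-generated subgroups, exactly where the hypothesis applies, and scalar multiplication by $\F_p$ reduces to iterated addition, hence is also definable.

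Then I would transfer $\omega$-categoricity and conclude. Because the Lie operations are parameter-free definable in $G$, while the group multiplication is recovered from $+$ and $[\,\cdot\,,\cdot\,]_L$ by the forward BCH formula and is thus definable in $L^G$, the two structures share the same underlying set and the same automorphism group, $\Aut(G)=\Aut(L^G)$ as permutation groups; by the Ryll-Nardzewski criterion the finiteness of the orbit count on each $G^n$ passes verbatim to $L^G$, so $L^G$ is $\omega$-categorical. Moreover $G$ is a locally finite $p$-group, hence locally nilpotent, and Lazard preserves the nilpotency class on finitely generated subgroups, so $L^G$ is locally nilpotent. Applying Wilson's conjecture for Lie algebras to $L^G$ gives that $L^G$ is nilpotent, say of class $c$; running BCH forward then expresses every iterated group-commutator of weight $>c$ in terms of Lie-brackets of weight $>c$, all of which vanish, so $G$ is nilpotent.

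The main obstacle is the second step: one must ensure that the Lazard correspondence, which is a priori only a \emph{local} device valid on each subgroup of class $<p$, assembles into one globally defined structure on $G$. This is precisely what the uniform bound $k<p$ secures --- the same bounded quantifier-free group-words compute $+$ and $[\,\cdot\,,\cdot\,]_L$ at every point, so there is nothing to patch --- and it is the reason the hypothesis must be stated with class strictly below $p$ (to keep the BCH denominators coprime to $p$) and on $3$-generated subgroups (so that the Lie identities are genuinely witnessed).
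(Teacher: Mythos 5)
Your proposal is correct and follows essentially the same route as the paper: uniform bound $k<p$ on $3$-generated subgroups, definable inverse Baker--Campbell--Hausdorff operations giving an $\omega$-categorical locally nilpotent Lie algebra $L^G$ on the same underlying set, Wilson's conjecture for Lie algebras applied to $L^G$, and forward BCH to recover nilpotency of $G$. Your write-up is in fact more detailed than the paper's (which compresses the definability and orbit-transfer arguments into two sentences), and your observation that only $\Aut(G)\subseteq\Aut(L^G)$ is needed for the orbit count is a correct refinement.
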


\subsection{Associative algebras of bounded nilexponent} The analog of the Burnside problem for associative algebras is the Kurosh-Levitzki problem \cite{Kurosch1941}.

\begin{generalkuroshproblem}
    Is every finitely generated nil algebra nilpotent?
\end{generalkuroshproblem}

\begin{kuroshproblem}
    Is every finitely generated algebra of bounded nilexponent nilpotent?
\end{kuroshproblem}

The counterexample to the general Burnside problem given by the work of Golod and Shavarevitch \cite{Golod1964, GolodShafarevitch1964} is in fact constructed from a counterexample to the general Kurosh-Levitzki problem. On the other hand, the Kurosh-Levitzki problem (unlike the Burnside problem) has a positive solution. It was established by Kaplansky \cite{Kaplansky1946} building on the work of Jacobson \cite{Jacobson1945}. 

\begin{theorem*}[Jacobson, Kaplanski]
    Every associative algebra of bounded nilexponent $n$ is locally nilpotent.
\end{theorem*}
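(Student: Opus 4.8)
The plan is to prove the statement directly by reducing to finitely generated algebras, extracting a multilinear polynomial identity from the hypothesis of bounded nilexponent, and then feeding this into the structure theory of PI-rings. Since local nilpotency means precisely that every finitely generated subalgebra is nilpotent, it suffices to fix a subalgebra $A$ generated by finitely many elements, with $a^n = 0$ for all $a\in A$, and to prove $A^N = 0$ for some $N$. The first key step is that such an $A$ satisfies a nontrivial \emph{multilinear} identity. Indeed, the full polarization of the function $x\mapsto x^n$,
\[
\sum_{\emptyset \neq S \subseteq \{1,\dots,n\}} (-1)^{n - |S|}\Bigl(\sum_{i \in S} x_i\Bigr)^{n} = \sum_{\sigma \in S_n} x_{\sigma(1)} \cdots x_{\sigma(n)},
\]
vanishes identically on $A$, because each summand on the left is an $n$-th power and hence $0$, while the right-hand side is a nonzero polynomial in every characteristic (a sum of $n!$ distinct monomials with coefficient $1$). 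Crucially, this argument is characteristic-free: it only adds and subtracts vanishing $n$-th powers, so no division by $n!$ is required. Thus $A$ is a PI-algebra satisfying a multilinear identity of degree $n$.

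The second step is to invoke Kaplansky's structure theorem: a primitive PI-algebra is a simple algebra finite-dimensional over its center, hence a matrix ring $M_k(D)$ over a division ring. Such an algebra is unital and contains non-nilpotent elements, so it cannot be nil; more generally, by Posner's theorem a nonzero prime PI-algebra has a central simple ring of fractions, and its (nonzero) center then contains no nilpotents, so a prime PI-algebra is never nil unless it is zero. Consequently a semiprime nil PI-algebra, being a subdirect product of prime PI-algebras, must be zero, and therefore any nil PI-algebra coincides with its lower nilradical (prime radical).

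The final, and hardest, step is to upgrade the statement ``$A$ equals its prime radical'' to genuine nilpotency, and this is exactly where finite generation must enter. The cleanest route is via Shirshov's height theorem: a PI-algebra generated by finitely many elements is spanned by products $w_1^{k_1}\cdots w_m^{k_m}$, where the $w_i$ range over a fixed finite set of words and the number $m$ of factors is bounded. Bounded nilexponent forces each exponent to satisfy $k_i < n$, so only finitely many such products survive, making $A$ finite-dimensional; and a finite-dimensional nil associative algebra is Artinian and nil, hence nilpotent, which finishes the proof. I expect this finiteness to be the main obstacle: Kaplansky's theorem controls the structure of the prime quotients but says nothing about the size of the radical, so the passage from a formal radical-theoretic statement to actual nilpotency of a finitely generated algebra is precisely the content that separates this theorem from the observations of the previous step. (In characteristic $0$ or characteristic $p>n$ one can bypass the structure theory altogether via the Nagata--Higman theorem, which even yields the uniform bound $A^{2^{n}-1}=0$.)
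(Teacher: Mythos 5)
The paper does not actually prove this statement: it is quoted as a classical theorem, with citations to Jacobson (1945) and Kaplansky (1946), so there is no in-paper argument to measure your proposal against; I therefore assess it on its own merits, and it is correct. Your polarization step is right and genuinely characteristic-free: the inclusion--exclusion computation shows that the coefficient of a degree-$n$ word involving exactly $t$ of the variables is $(1-1)^{n-t}$, which vanishes unless $t=n$, so the left-hand side collapses to the sum of the $n!$ multilinear monomials, each with coefficient $1$; hence vanishing of all $n$-th powers yields the \emph{monic} multilinear identity $\sum_{\sigma\in S_n}x_{\sigma(1)}\cdots x_{\sigma(n)}=0$, which is exactly the kind of identity Shirshov's height theorem accepts. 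The finishing step is also sound: the height theorem spans the finitely generated algebra $A$ by products $w_1^{k_1}\cdots w_m^{k_m}$ with the $w_i$ drawn from a finite set of words and $m$ bounded, any factor with $k_i\geq n$ kills the product, so $A$ is finite-dimensional, and a finite-dimensional nil algebra is nilpotent (Wedderburn). Two remarks. First, your middle step (Kaplansky's theorem on primitive PI-algebras, Posner's theorem, and the conclusion that a nil PI-algebra equals its prime radical) is never used afterwards: the Shirshov argument runs directly from ``finitely generated, PI, nil of bounded index'' to finite-dimensionality, so that entire paragraph can be deleted without loss --- and contrary to what you say, the ``hardest step'' is not an upgrade of the radical-theoretic statement but an independent combinatorial theorem. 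Second, your route is anachronistic relative to the result as attributed: Shirshov's height theorem (1957) postdates and, as the paper itself notes, strengthens the Jacobson--Kaplansky line of results; this is not a circularity, since the height theorem is proved by pure combinatorics on words with no structure theory behind it, but it is worth knowing that the historical proof instead combined precisely the structure theory of your redundant step with a Levitzki-style analysis of nil ideals, which is what your step 3 would have needed to become had you not invoked Shirshov.
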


Observe that this result really is an analog to Zelmanov theorem on the local nilpotency of $n$-Engel Lie algebras. Note that in particular, Cherlin's theorem (Fact \ref{fact:cherlinomegacatnilringarenilpotent}) is equivalent to the statement: every locally nilpotent $\omega$-categorical associative algebra is nilpotent. The theorem of Jacobson-Kaplansky was later extended to all PI algebras by Kaplansky \cite{Kaplanski1948} and then even further by Shirshov \cite{Shirshov1957ringswithidentity}.

The question of global nilpotency of associative algebras of bounded nilexponent is solved by the celebrated theorem of Nagata \cite{Nagata1952}, Higman \cite{Higman1956nagata}, Dubnov and Ivanov \cite{DubnovIvanov1943}:
\begin{theorem*}[Nagata, Higman, Dubnov, Ivanov]
    Every associative algebra of bounded nilexponent $n$ over a field of characteristic $0$ or $p>n$ is nilpotent.
\end{theorem*}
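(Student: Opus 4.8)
The plan is to prove the sharp quantitative form due to Higman: if $A$ is an associative (not necessarily unital) algebra over a field $K$ with $\chara K = 0$ or $\chara K = p > n$, and $a^n = 0$ for every $a \in A$, then $A^{2^n - 1} = 0$; in particular $A$ is nilpotent. I would argue by induction on the nilexponent $n$. The base case $n = 1$ is immediate, since $a = a^1 = 0$ forces $A = 0$.

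The first step, valid for every $n$, is linearization. Applying the polarization operator to the identity $a^n = 0$, that is, forming
\[
\sum_{\emptyset \neq S \subseteq \{1, \ldots, n\}} (-1)^{n - |S|} \Bigl( \sum_{i \in S} x_i \Bigr)^{n} = 0,
\]
and observing that each inner $n$-th power vanishes, yields the multilinear identity
\[
\sum_{\sigma \in S_n} x_{\sigma(1)} x_{\sigma(2)} \cdots x_{\sigma(n)} = 0 \qquad (\star)
\]
for all $x_1, \ldots, x_n \in A$. It is worth running the commutative case first, both as a sanity check and as a diagnostic for the hypotheses: if $A$ is commutative then $(\star)$ collapses to $n! \, x_1 \cdots x_n = 0$, and since $\chara K = 0$ or $p > n$ makes $n!$ invertible we obtain $x_1 \cdots x_n = 0$, i.e. $A^n = 0$. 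This already pinpoints the exact role of the characteristic assumption: extracting $(\star)$ works over any field, but \emph{using} it requires dividing by the factorials and multinomial coefficients that appear on specialization, and this is legitimate only when $\chara K = 0$ or $p > n$.

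For the inductive step I would assume the bound $B^{\,2^{n-1}-1} = 0$ for every algebra $B$ that is nil of exponent $n-1$ over such fields, and deduce $A^{2^n - 1} = 0$, using the arithmetic decomposition $2^n - 1 = 2(2^{n-1} - 1) + 1$. Solving $(\star)$ for its identity-permutation term rewrites any monomial $x_1 \cdots x_n$ as a signed sum of the remaining $n! - 1$ permuted products; the heart of the argument is to iterate this rewriting inside a product of length $2^n - 1$ so that a central block of length $2^{n-1} - 1$ is driven into a subquotient on which the effective nilexponent has dropped to $n - 1$, whereupon the induction hypothesis annihilates it. Assembling the resulting relations yields $A^{2^n - 1} = 0$, hence nilpotency.

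I expect the inductive step to be the genuine obstacle: turning $(\star)$ into a \emph{terminating} rewriting procedure, verifying that the relevant subquotient really is nil of exponent $\leq n - 1$, and keeping exact control of the word length so that a clean bound such as $2^n - 1$ emerges rather than a non-effective one. This is the combinatorial core of Higman's argument, and it is again exactly where the divisions licensed by $\chara K = 0$ or $p > n$ are consumed. As a cross-check I would keep in mind the alternative, more structural route pursued by Dubnov--Ivanov: embedding each finitely generated subalgebra---which is finite-dimensional since, by the Jacobson--Kaplansky theorem above, $A$ is locally nilpotent---into a matrix algebra via its regular representation on $A \oplus K$, so that every element acts as a nilpotent operator, one shows that all trace functions $\mathrm{tr}(a_1 \cdots a_m)$ vanish (first $\mathrm{tr}(a^k) = 0$ because powers remain nilpotent, then the word-traces by polarizing the power-trace identities and inverting integers $\leq n$), and concludes that an operator algebra with identically vanishing trace form has trivial semisimple part and is therefore nilpotent. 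The two routes agree on the essential point: the characteristic hypothesis is not a technical artefact but a necessary ingredient, as the commutative computation above and the breakdown of the uniform bound in characteristic $p \leq n$ both make clear.
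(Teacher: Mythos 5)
The paper itself does not prove this statement: it is quoted in the expository section as a classical theorem, with citations to Nagata, Higman, and Dubnov--Ivanov, so your proposal has to stand on its own. As written it does not. Everything up to and including the linearization is fine (the polarization identity is indeed characteristic-free, the commutative sanity check is correct, and the recursion $2^n-1 = 2(2^{n-1}-1)+1$ is genuinely the shape of Higman's induction), but the inductive step --- which you yourself flag as ``the genuine obstacle'' --- is precisely the content of the theorem, and your sketch of it is not a proof. ``Solving $(\star)$ for the identity permutation and iterating the rewriting'' does not terminate as stated: each application replaces one monomial of length $n$ by $n!-1$ others of the same length, and nothing prevents cycling. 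Likewise ``a central block is driven into a subquotient on which the effective nilexponent has dropped'' names no subquotient and proves nothing about it.

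What the induction actually needs is two concrete ingredients that never appear in your text. Let $I$ be the two-sided ideal of $A$ generated by $\{a^{n-1} : a\in A\}$. First, $A/I$ is nil of exponent $n-1$ \emph{by construction} (the image of each $a^{n-1}$ is $0$), so your worry about ``verifying that the relevant subquotient really is nil of exponent $\leq n-1$'' dissolves once the right object is named; the induction hypothesis then gives $A^{2^{n-1}-1}\subseteq I$. Second --- and this is the heart of Higman's argument --- one must prove the key lemma $IAI = 0$, which, since $I$ is spanned by elements $u\,a^{n-1}v$ with $u,v$ ranging over $A$ together with a formal unit, reduces to
\[
a^{n-1}\,x\,b^{n-1} = 0 \qquad \text{for all } a,b,x\in A.
\]
Granting this, $A^{2^n-1} = A^{2^{n-1}-1}\cdot A\cdot A^{2^{n-1}-1}\subseteq IAI = 0$ and the theorem follows. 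The key lemma is where the real work and the remaining divisions are consumed: from $(\star)$ one extracts the partial linearization $\sum_{i=0}^{n-1} a^i x a^{n-1-i} = 0$ (dividing by $(n-1)!$), multiplies on the left by $a^{n-1}$ to get $a^{n-1}xa^{n-1}=0$, and then needs a genuinely clever manipulation to pass from equal to distinct outer factors --- naive polarization of $a^{n-1}xa^{n-1}=0$ in $a$ only yields $a^{n-1}xb^{n-1} + b^{n-1}xa^{n-1} = -\sum(\text{mixed words})$, not the lemma. None of this is in your proposal. Finally, your fallback route via Dubnov--Ivanov traces has its own gap: showing that each finitely generated subalgebra acts by operators with vanishing trace form, hence is nilpotent, gives a nilpotency bound depending on the dimension of the representation, and therefore on the subalgebra; that merely re-proves local nilpotency (Jacobson--Kaplansky), whereas the theorem demands a bound depending on $n$ alone --- exactly the local-to-global jump that this paper is concerned with.
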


\subsection{Conclusion}
To conclude this section, we summarize the situation. We have encountered 4 different local nilpotency problems, in decreasing difficulty: for groups of bounded exponent (the Burnside problem), for $n$-Engel groups, for $n$-Engel Lie algebras and for associative algebras of bounded nilexponent (the Kurosh-Levitzki problem). 

\begin{figure}[h]
    \centering
    
\begin{tabular}{ |c||c|c|c|c|  }
 
 \hline 
 Structure & \multicolumn{2}{c|}{$p$-group}  & Lie algebra over $\F_p$ & Associative algebra over $\F_p$ \\
 \hline
 Bound $n$   & Exponent n    & $n$-Engel &   $n$-Engel & Nilexponent $n$ \\ 
 \hline
 Local nilpotency problem &   False  & Open   & True (Zelmanov) & True (Jacobson-Kaplansky) \\ 
 $p>n$ & \diagbox{}{} & Unknown &  Loc. nilpotent (Kostrikin) & Nilpotent (Nagata-Higman) \\ 
 $p>>n$    & \diagbox{}{} & Nilpotent & Nilpotent & Nilpotent\\ 
 \hline
\end{tabular}
    \label{fig:comparisondifferentproblems}
\end{figure}

As we saw above, by local finiteness of $\omega$-categorical structures, the local nilpotency problem has positive answer for $\omega$-categorical $p$-groups, Engel Lie algebras and associative nilalgebras. The question of Wilson boils down to passing from local nilpotency to global nilpotency.

\section{Solution for $3$-Engel Lie algebras over $\F_5$}
\subsection{Notations} In a Lie algebra $L = (L,+,[.,.])$ over a field $\F$, for $S,R\seq L$ we denote by $[S,R]$ the vector span of $\set{[a,b]\mid a\in L,b\in R}$ and iteratively $[S,R^n] = [R,S,\ldots,S] = [[R,\ldots, S],S]$. We denote by $I(S)$ the ideal in $L$ generated by $S$, i.e. the vector space
\[I(S) = \Span_\F(S)+[S,L]+[S,L,L]+\ldots\]
$I(S)$ is in particular a Lie subalgebra of $L$.
\subsection{Enveloping algebra} Let $L$ be an algebra over a field $\F$. Let $\End(L)$ be the associative algebra of linear endomorphisms of the vector space $L$. For every $a\in L$ the adjoint map $\ad_a : x\mapsto [x,a]$ is an endomorphism of $L$, i.e. an element of $\End(L)$. We define $A(L)$ to be the subalgebra of $\End(L)$ generated by all the endomorphisms $\ad_a$ for $a\in L$. We will usually identify a Lie element in $A(L)$ with its preimage under $\ad$ in $L$.

\subsection{Generalities for $3$-Engel Lie algebras of characteristic $5$}
Recall that a Lie algebra $L$ is $3$-Engel if for all $a,b\in L$ the following equality holds:
\[[a,b^3] = 0.\]

The study of Engel Lie algebras often consists of finding refined identities. The following is classical.
\begin{fact}[Higgins \cite{higginsEngel}]\label{fact:higgins}
Let $L$ be a $3$-Engel Lie algebra over a field of characteristic $\neq 2,3$. Then we have the following identities in $A(L)$ (for all $b,c\in L$)
\begin{enumerate}[label=(\alph*)]
    \item $bc^2+cbc+c^2b = 0$
    \item $3bc^2-3cbc+c^2b = 0$
\end{enumerate}
\end{fact}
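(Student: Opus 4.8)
The plan is to prove both identities inside the enveloping algebra $A(L)$, where the $3$-Engel law becomes the single operator identity $\ad_x^3 = 0$ for every $x\in L$ (since $[a,x,x,x] = a\,\ad_x^3$), and where the Jacobi identity takes the form $\ad_{[x,z]} = \ad_x\ad_z - \ad_z\ad_x$. Both (a) and (b) are degree-three consequences of $\ad_x^3=0$: (a) is a pure linearization, while (b) additionally needs the Lie structure and a Jacobi rewriting. Throughout I write $b,c$ for $\ad_b,\ad_c$, as the statement does.

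First I would establish (a) by polarizing the cube-zero law. Substituting $x = b+\lambda c$ into $\ad_x^3=0$ and using $\ad_b^3=\ad_c^3=0$ gives, for every scalar $\lambda$,
\[ \lambda\,(b^2c + bcb + cb^2) + \lambda^2\,(bc^2 + cbc + c^2b) = 0. \]
Taking $\lambda = 1$ and $\lambda = -1$ and using that $2$ is invertible isolates the two homogeneous parts, producing $bc^2+cbc+c^2b = 0$, which is (a), together with its companion $b^2c+bcb+cb^2=0$. This part uses only $\mathrm{char}\neq 2$; we keep the classical hypothesis $\mathrm{char}\neq 2,3$.

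The identity (b) is the delicate one, and the key point is that (a) holds as an operator identity for \emph{any} Lie element in its ``single'' slot: for all $y$, one has $\ad_y\ad_c^2 + \ad_c\ad_y\ad_c + \ad_c^2\ad_y = 0$. Applying this zero operator to the particular vector $b$ yields
\[ [b,y,c,c] + [b,c,y,c] + [b,c,c,y] = 0 \qquad\text{for all } y. \]
I would then re-express each left-normed bracket above in the basis $[y,b,c,c],\,[y,c,b,c],\,[y,c,c,b]$ using only antisymmetry and Jacobi (equivalently $\ad_{[u,v]}=\ad_u\ad_v-\ad_v\ad_u$). A short computation gives $[b,y,c,c] = -[y,b,c,c]$, then $[b,c,y,c] = -[y,b,c,c]+[y,c,b,c]$, and finally $[b,c,c,y] = -[y,b,c,c]+2[y,c,b,c]-[y,c,c,b]$; summing gives $-3[y,b,c,c]+3[y,c,b,c]-[y,c,c,b]=0$, that is, $3bc^2 - 3cbc + c^2b = 0$, which is (b).

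The main obstacle is exactly this last rewriting. Whereas (a) is a symmetric polarization and essentially automatic, (b) cannot be obtained from (a) with $c$ fixed alone (substituting $\ad_{[b,c]}$ into (a) collapses to a triviality once $\ad_c^3=0$ is used). One must instead feed the universally quantified identity (a) back at the specific vector $b$ and carefully commute the leading generator past $y$ via Jacobi; it is the asymmetry of this commutation that manufactures the non-symmetric coefficients $(3,-3,1)$. I would take care to verify the three bracket expansions and the signs, and to confirm that the coefficient extraction in (a) is legitimate in the characteristic at hand, i.e. that the field has more than two elements so the homogeneous components can be separated.
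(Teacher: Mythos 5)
Your proof is correct. There is nothing internal to compare it against: the paper states this result as a quoted Fact with a citation to Higgins and gives no proof, so your argument serves as a self-contained replacement for the citation. Both halves of your argument check out. Part (a) is the standard polarization of $\ad_x^3=0$ at $x=b+\lambda c$: the coefficient matrix for $\lambda=\pm 1$ has determinant $2$, so separating the two homogeneous components only needs $1\neq -1$, i.e.\ characteristic $\neq 2$ (your phrasing about ``more than two elements'' is a harmless over-caution). Part (b) is the genuinely nontrivial step and your computation is right: instantiating (a) with an arbitrary $y$ in the ``single'' slot, evaluating the resulting zero operator at the vector $b$, and rewriting $[b,y,c,c]$, $[b,c,y,c]$, $[b,c,c,y]$ in the left-normed basis via $\ad_{[u,v]}=\ad_u\ad_v-\ad_v\ad_u$ gives
\[
-3[y,b,c,c]+3[y,c,b,c]-[y,c,c,b]=0 \quad \text{for all } y\in L,
\]
which is exactly identity (b) as an operator identity in $\End(L)$, hence in $A(L)$; I verified the three bracket expansions and the signs, and they are consistent with the paper's downstream use (subtracting (a) from (b) indeed yields $2bc^2-4cbc=0$ as in Lemma \ref{lm:crucialidentities}). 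Your side remark is also accurate: substituting $[b,c]$ for $b$ in (a) collapses to $bc^3-c^3b=0$, a triviality, so the detour through evaluation at $b$ and Jacobi rewriting is really needed to produce the asymmetric coefficients $(3,-3,1)$. The only point worth flagging is that your argument nowhere uses characteristic $\neq 3$; that hypothesis is inherited from Higgins's formulation (and is needed elsewhere, e.g.\ to combine (a) and (b) into the identities of Lemma \ref{lm:crucialidentities}), not for the two identities themselves as you derive them.
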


For instance, $(a)$ is equivalent to saying that for all $a,b,c\in L$ we have $[a,b,c,c]+[a,c,b,c]+[a,c,c,b] = 0$.

\begin{lemma}\label{lm:crucialidentities}
Let $L$ be a $3$-Engel Lie algebra over a field of characteristic $5$. Then we have the following identities in $A(L)$ (for all $b,c\in L$)
\begin{enumerate}
    \item $bc^2 = c^2b$
    \item $b^2c^2 = c^2b^2$
    \item $[b,c]^2 = -b^2c^2$.
\end{enumerate}
\end{lemma}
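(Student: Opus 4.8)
The plan is to work entirely inside the associative algebra $A(L)$ and to feed the two Higgins identities of Fact~\ref{fact:higgins} (valid in characteristic $5$ since $5 \neq 2,3$) into elementary linear algebra over $\F_5$. Writing $X = bc^2$, $Y = cbc$, $Z = c^2b$, identities (a) and (b) become $X+Y+Z = 0$ and $3X-3Y+Z = 0$: two linear relations among three monomials. Subtracting the first from the second would give $2X-4Y=0$, hence $X = 2Y$, and substituting back into (a) would give $Z = -3Y$. Since $-3 = 2$ in $\F_5$, this forces $Z = 2Y = X$, i.e. $bc^2 = c^2b$, which is (1). Along the way I would record the auxiliary relation (using $1/2 = 3$ in $\F_5$)
\[
cbc = 3\,bc^2,
\]
which is the workhorse for part (3).

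For (2), the observation to make is that (1) says $c^2 = (\ad_c)^2$ commutes with every generator $b = \ad_b$ of $A(L)$. As $A(L)$ is by definition generated as an associative algebra by the adjoint maps, each square $c^2$ is therefore central in $A(L)$. In particular $c^2$ commutes with $b^2$, giving (2) at once; centrality of squares is the real structural content of (1) and is what makes the remaining computation collapse.

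For (3) I would first fix, via the Jacobi identity and the identification of Lie elements with their images under $\ad$, that the Lie element $[b,c]$ corresponds to the associative commutator $bc-cb$ in $A(L)$. Then I would expand
\[
[b,c]^2 = (bc-cb)^2 = bcbc - bc^2b - cb^2c + cbcb
\]
and reduce the four monomials separately. The two outer terms use $cbc = 3\,bc^2$ together with $bc^2 = c^2b$ to give $bcbc = 3\,b^2c^2$ and $cbcb = 3\,b^2c^2$, while the two middle terms use centrality of $c^2$ and of $b^2$ to give $bc^2b = b^2c^2$ and $cb^2c = b^2c^2$. Collecting yields $[b,c]^2 = (3-1-1+3)\,b^2c^2 = 4\,b^2c^2 = -b^2c^2$ in characteristic $5$, which is (3).

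The computation is mechanical once (1) and the relation $cbc = 3\,bc^2$ are in hand, so I do not expect a deep obstacle; the delicate points are purely bookkeeping. First, the monomial order in Higgins' identities must be matched to the product convention in $A(L)$ (so that, e.g., $bc^2$ really corresponds to $[a,b,c,c]$). Second, one must confirm that $[b,c] = bc-cb$, with no stray sign, under the author's convention $\ad_a\colon x\mapsto[x,a]$. Third, one must keep the $\F_5$-arithmetic straight, notably $-3 = 2$, $1/2 = 3$, and $4 = -1$. I expect the sign check for $[b,c]$ to be the only place where an error could realistically creep in.
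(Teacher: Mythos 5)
Your proposal is correct and follows essentially the same route as the paper: combine Higgins' identities linearly over $\F_5$ to get $bc^2 = c^2b$ (recording $cbc = 3\,bc^2$ along the way), deduce (2) from (1), and expand $(bc-cb)^2$ term by term to get $4\,b^2c^2 = -b^2c^2$. Your one stylistic difference, deducing (2) from the centrality of squares in $A(L)$ rather than the paper's explicit associativity chain $b^2c^2 = (b^2c)c = (cb^2)c = c(b^2c) = c^2b^2$, is a clean packaging of the same observation; also note that since $[b,c]$ is squared, the sign convention for $[b,c] = \pm(bc-cb)$ you worried about is actually immaterial.
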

\begin{proof}
Subtracting $(a)$ from $(b)$ in Fact \ref{fact:higgins}, we obtain $2bc^2 - 4cbc = 0$ i.e. $bc^2 = 2cbc$. Using $bc^2 = 2cbc$ in the equations $(a)$ we obtain $\frac{3}{2}bc^2 + c^2b = 0$ which, in characteristic $5$ yields $(1)$. We get $(2)$ by associativity using $(1)$: 
\[b^2c^2 = (b^2c)c = (cb^2)c = c(b^2c) = c^2b^2.\]
For $(3)$:
    \begin{align*}
        [b,c]^2 &= (bc-cb)(bc-cb)\\
        &= bcbc-bc^2b-cb^2c+cbcb
    \end{align*}
    By above, $bc^2 = 2cbc$ hence multiplying on the left by $b$ and dividing by $2$, we obtain $\frac{1}{2}b^2c^2 = bcbc$. Exchanging $b$ and $c$ and using $(2)$, yields $\frac{1}{2}b^2c^2 = cbcb$. Using $(1)$ we also have  $bc^2b = cb^2c = b^2c^2$. In turn we conclude 
    \[[b,c]^2 = \frac{1}{2}b^2c^2 -b^2c^2 - b^2c^2 + \frac{1}{2}b^2c^2 = -b^2c^2.\]
\end{proof}

\begin{remark}
$(1)$ and $(2)$ are well-known identities in $3$-Engel Lie algebras of characteristic $\neq 2,3$, but it seems that $(3)$ as stated is new. 
\end{remark}

\subsection{Preparation}
In this subsection, we fix a $3$-Engel Lie algebra $L$ over a field of characteristic $5$. Let $a\in L$ be a fixed element. We define the maps $B_a : L^2\to L$ and $q_a : L\to L$ as follows 
\[B_a(x,y) = [a,x,y]\quad \quad q_a(x) = B(x,x) = [a,x^2].\]
The following is easy to check.
\begin{fact}
    For all $a\in L$, $B_a$ is a bilinear map with associated quadratic map $q_a$. In particular we have for all $x,y$
    \[q_a(x+y) = q_a(x)+q_a(y)+B_a(x,y)+B_a(y,x).\]
\end{fact}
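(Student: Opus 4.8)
The plan is to deduce everything from the bilinearity of the Lie bracket, applied twice. Writing out the left-normed convention, $B_a(x,y) = [a,x,y] = [[a,x],y]$, so I would first check additivity and homogeneity in each slot separately. Linearity in $y$ is immediate from bilinearity of $[\cdot,\cdot]$ in its second argument, applied with the fixed element $[a,x]$:
\[
[[a,x],\lambda y + y'] = \lambda [[a,x],y] + [[a,x],y'].
\]
For the first slot, I would use that $x \mapsto [a,x]$ is linear (bilinearity of the bracket in its second argument, with $a$ fixed), so that $[a,\lambda x + x'] = \lambda [a,x] + [a,x']$, and then apply linearity of the map $z\mapsto[z,y]$ in its first argument. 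Composing these two linear maps shows that $B_a$ is bilinear.

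Once bilinearity is established, $q_a(x) = B_a(x,x)$ is \emph{by definition} the quadratic map associated to the bilinear map $B_a$, so there is nothing further to verify beyond unwinding the definition; the identification $q_a(x) = [a,x^2]$ is exactly the notational convention $[a,x^2] = [a,x,x]$. The polarization identity is then a one-line expansion using bilinearity in both slots:
\[
q_a(x+y) = B_a(x+y,\,x+y) = B_a(x,x) + B_a(x,y) + B_a(y,x) + B_a(y,y),
\]
which is precisely $q_a(x) + q_a(y) + B_a(x,y) + B_a(y,x)$.

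There is essentially no obstacle here: the statement is purely formal and uses none of the Engel hypotheses nor the characteristic-$5$ identities of Lemma \ref{lm:crucialidentities}. The one point worth flagging is that $B_a$ is \emph{not} symmetric — since the bracket is antisymmetric, $B_a(x,y) = [a,x,y]$ and $B_a(y,x) = [a,y,x]$ need not agree — which is exactly why the cross terms appear as $B_a(x,y) + B_a(y,x)$ rather than collapsing to $2B_a(x,y)$. Recording this asymmetry now is sensible, since it is what will matter when $q_a$ and its associated bilinear form are exploited in the coding argument later.
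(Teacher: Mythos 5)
Your proof is correct and is exactly the routine verification the paper intends (it states this Fact with only the remark ``the following is easy to check'' and gives no proof): bilinearity of $B_a(x,y)=[[a,x],y]$ from bilinearity of the bracket, followed by expansion of $q_a(x+y)=B_a(x+y,x+y)$. Your observation that the cross terms cannot be collapsed to $2B_a(x,y)$ because $B_a$ is not symmetric is a worthwhile point and consistent with why the paper writes the identity with both $B_a(x,y)$ and $B_a(y,x)$.
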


We now define the essential tool for our argument. Let $f_a: L^2\to L$ be defined as
\[f_a(x,y) = q_a([x,y]).\]

\begin{lemma}\label{lm:associativityoff}
    For each $a\in L$ the following properties are satisfied by $f = f_a$ (for all $x,y,z\in L$):
    \begin{enumerate}
        \item $f(x,x) = 0$
        \item $f(x,y) = f(y,x)$
        \item $f([x,y],z) = f(x,[y,z])$.
    \end{enumerate}
\end{lemma}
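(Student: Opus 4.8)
The plan is to transport the entire computation into the enveloping algebra $A(L)$ and reduce everything to the associative identities of Lemma~\ref{lm:crucialidentities}. The starting observation is that $q_a$ has a purely associative description: since $q_a(w) = [a,w,w] = \ad_w(\ad_w(a))$, it is given in $A(L)$ by $q_a(w) = w^2(a)$, where $w^2$ denotes the square (in $A(L)$) of the image of $w$, acting on $a$. Applying this with $w = [x,y]$, and using that the Lie element $[x,y]$ is identified in $A(L)$ with the associative commutator $xy - yx$, identity $(3)$ of Lemma~\ref{lm:crucialidentities} collapses everything into a single clean formula:
\[ f_a(x,y) = q_a([x,y]) = [x,y]^2(a) = -(x^2y^2)(a). \]
All three properties will then fall out of this formula by manipulations inside the associative algebra $A(L)$.

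Properties $(1)$ and $(2)$ are the quick ones. For $(1)$, since $[x,x]=0$ we get $f_a(x,x)=q_a(0)=0$ directly (equivalently $-(x^4)(a)=-[a,x^3,x]=0$ by the $3$-Engel identity). For $(2)$ I would use that $q_a$ is an even quadratic map, so $q_a(-w)=q_a(w)$; combined with $[y,x]=-[x,y]$ this gives $f_a(y,x)=q_a(-[x,y])=q_a([x,y])=f_a(x,y)$. Symmetry is also visible straight from the displayed formula together with $x^2y^2=y^2x^2$, which is identity $(2)$ of Lemma~\ref{lm:crucialidentities}.

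The substantive point is property $(3)$, and here the displayed formula does the real work. Expanding both sides with it, I expect
\[ f_a([x,y],z) = -([x,y]^2 z^2)(a) \qquad\text{and}\qquad f_a(x,[y,z]) = -(x^2[y,z]^2)(a). \]
Substituting $[x,y]^2 = -x^2y^2$ and $[y,z]^2 = -y^2z^2$ (identity $(3)$ again) turns each side into $(x^2y^2z^2)(a)$, using only associativity of the product in $A(L)$; hence the two sides coincide. In other words, the ``associativity'' of $f$ is nothing but the associativity of the monomial $x^2y^2z^2$, once each bracket has been converted to a square. The only delicate step—and the one I would watch most carefully—is the initial reduction itself: correctly identifying the Lie bracket $[u,v]$ with the associative commutator $uv-vu$ in $A(L)$ so that Lemma~\ref{lm:crucialidentities}$(3)$ genuinely applies, since this hinges on the left-normed convention and the chosen direction of $\ad$. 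Once that bookkeeping is pinned down, the three properties are each a short calculation.
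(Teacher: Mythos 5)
Your proof is correct and follows essentially the same route as the paper: both arguments dispatch (1) and (2) by the same trivial observations ($[x,x]=0$ and evenness of $q_a$), and both prove (3) by applying Lemma~\ref{lm:crucialidentities}(3) twice on each side so that both sides collapse to the common monomial $[a,x^2,y^2,z^2]$ (your $(x^2y^2z^2)(a)$). The only difference is bookkeeping---left function application versus the paper's right-action bracket notation---which, as you correctly flag, is harmless here because $[b,c]^2$ is insensitive to the sign of the commutator and squares commute by Lemma~\ref{lm:crucialidentities}(2).
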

\begin{proof}
    $(1)$ is trivial since $[x,x] = 0$. $(2)$ holds because $q_a(-x) = q_a(x)$ for all $x$. It remains to check $(3)$. 
    \begin{align*}
        f([x,y],z) = q([[x,y],z]) = [a,[[x,y],z]^2] 
    \end{align*}
    An application of Lemma \ref{lm:crucialidentities} (3) (with $b = [x,y]$ and $c = z$) yields  
    \[[a,[[x,y],z]^2] = -[a,[x,y]^2,z^2]\]
    By Lemma \ref{lm:crucialidentities} (3) we have $[a,[x,y]^2] = - [a,x^2y^2]$ so
    \[-[a,[x,y]^2,z^2] = [a,x^2,y^2,z^2]\]
    It follows that $f([x,y],z) = [a,x^2,y^2,z^2]$
    Note that at this point, by Lemma \ref{lm:crucialidentities} (2), squares of elements of $L$ in $A(L)$ commute hence we already know that 
    \[f([x,y],z) = [a,x^2,y^2,z^2]  = [a,y^2,x^2,z^2] =  [a,x^2,z^2,y^2]\ldots\]
    However this is not needed for $(3)$.
    Using Lemma \ref{lm:crucialidentities} (3) with $a = [a,x^2], b = y^2, c = z^2$ we have 
    \[[a,x^2,y^2,z^2] = [[a,x^2],y^2,z^2] = -[[a,x^2],[y,z]^2] = - [a,x^2,[y,z]^2].\] 
    We conclude using Lemma \ref{lm:crucialidentities} (3) one last time:
    \begin{align*}
        f([x,y],z) &= [a,x^2,y^2,z^2]\\
        &= - [a,x^2,[y, z]^2]\\
        &= [a[x,[yz]]^2]\\
        &= f(x,[y,z])
    \end{align*}
\end{proof}

We can now freely use the notation 
\[f(x_1,\ldots,x_n) = q([x_1,\ldots ,x_n]).\]

\begin{lemma}\label{lm:transitivityofequalitythroughf}
    For all $x_1,\ldots,x_n\in L$ and for all $\sigma\in \mathfrak S _n$ we have 
    \[f(x_1,\ldots,x_n) = [a[x_1\ldots x_n]^2] = (-1)^{n+1}[ax_1^2\ldots x_n^2] = f(x_{\sigma(1)},\ldots, x_{\sigma(n)}).\]
    In particular, for all $x,y,x',y',z\in L$
    \begin{itemize}
        \item $[f(x,y),z^2] = -f(x,y,z)$,
        \item if $f(x,y) = f(x',y')$ then $f(x,y,z) = f(x',y',z)$ and $f(z,x,y) = f(z,x',y')$,
        \item if $f(x,y) = 0$ then $f(x,y,z) = 0$.
    \end{itemize}
\end{lemma}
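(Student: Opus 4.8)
The plan is to prove the single master formula
\[
f(x_1,\ldots,x_n) = (-1)^{n+1}[a,x_1^2,\ldots,x_n^2]
\]
by induction on $n$, and then to read off the permutation invariance and the three bulleted consequences from it. The first equality $f(x_1,\ldots,x_n) = [a,[x_1,\ldots,x_n]^2]$ is just the definition of $f$, so the real content is this middle equality; once it is available the claim $f(x_1,\ldots,x_n) = f(x_{\sigma(1)},\ldots,x_{\sigma(n)})$ becomes transparent.

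For the base cases, $n=1$ is immediate since $f(x_1) = q(x_1) = [a,x_1^2]$, and $n=2$ is exactly Lemma \ref{lm:crucialidentities}(3): $f(x,y) = [a,[x,y]^2] = [a,-x^2y^2] = -[a,x^2,y^2]$. For the inductive step I would peel off the last bracket, writing $w = [x_1,\ldots,x_{n-1}]$ so that $[x_1,\ldots,x_n] = [w,x_n]$ and hence $f(x_1,\ldots,x_n) = q([w,x_n]) = f(w,x_n)$. Applying the two-variable case to the pair $(w,x_n)$ gives $f(w,x_n) = -[a,w^2,x_n^2] = -[[a,w^2],x_n^2]$, and since $[a,w^2] = q(w) = f(x_1,\ldots,x_{n-1})$, the induction hypothesis $f(x_1,\ldots,x_{n-1}) = (-1)^n[a,x_1^2,\ldots,x_{n-1}^2]$ feeds in to yield
\[
f(x_1,\ldots,x_n) = -\bigl[(-1)^n[a,x_1^2,\ldots,x_{n-1}^2],\,x_n^2\bigr] = (-1)^{n+1}[a,x_1^2,\ldots,x_n^2].
\]

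With the master formula established, permutation invariance is immediate: by Lemma \ref{lm:crucialidentities}(2) the squares $x_i^2$ pairwise commute in $A(L)$, so the product $x_1^2\cdots x_n^2$ is unchanged by reordering while the sign $(-1)^{n+1}$ does not depend on $\sigma \in \mathfrak{S}_n$. The three consequences then follow mechanically. For the first, $f(x,y,z) = (-1)^4[a,x^2,y^2,z^2] = [a,x^2,y^2,z^2]$ whereas $f(x,y) = -[a,x^2,y^2]$, so $[f(x,y),z^2] = -[a,x^2,y^2,z^2] = -f(x,y,z)$. The second follows because $f(x,y,z) = -[f(x,y),z^2]$ depends on $(x,y)$ only through $f(x,y)$; the variant $f(z,x,y) = f(z,x',y')$ is then obtained by moving $z$ to the end via permutation invariance. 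The third is just the case $f(x,y)=0$ of the first.

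I do not expect a genuine obstacle here, as the argument is a bookkeeping induction. The only points needing care are tracking the sign $(-1)^{n+1}$ across each inductive step and confirming that the reduction $[x_1,\ldots,x_n] = [[x_1,\ldots,x_{n-1}],x_n]$ legitimately allows the two-variable identity to be invoked on the compound Lie element $w = [x_1,\ldots,x_{n-1}]$ — which is fine, since Lemma \ref{lm:crucialidentities}(3) (and hence the $n=2$ formula) holds for all elements of $L$.
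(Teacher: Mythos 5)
Your proposal is correct and follows essentially the same route as the paper: an induction that peels off the last bracket $[x_1,\ldots,x_n] = [[x_1,\ldots,x_{n-1}],x_n]$ and applies Lemma \ref{lm:crucialidentities}(3) at each step to get the signed formula $(-1)^{n+1}[a,x_1^2,\ldots,x_n^2]$, then Lemma \ref{lm:crucialidentities}(2) for permutation invariance, with the three bullets read off exactly as in the paper (the first by direct computation, the other two as immediate consequences). Your write-up merely makes explicit the sign bookkeeping and the legitimacy of applying the two-variable identity to the compound element $w$, both of which the paper leaves as ``an easy induction.''
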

\begin{proof}
    From an easy induction using Lemma \ref{lm:crucialidentities} (3), we obtain
    \[[a[x_1,\ldots,x_n]^2] = [a[[x_1,\ldots,x_{n-1}]x_n]^2]  = -[a[x_1,\ldots,x_{n-1}]^2x_n^2] = \ldots = (-1)^{n+1}[ax_1^2\ldots x_n^2].\]
    Then from Lemma \ref{lm:crucialidentities} (2), the maps $x\mapsto [x,x_i^2]$ and $x\mapsto [x,x_j^2]$ commute for all $i,j$ so we get the first series of equalities. For the first item, 
    \[[f(x,y),z^2] = [-[a,x^2,y^2],z^2] = -[a,x^2,y^2,z^2] = -f(x,y,z).\]
    The last two items follow immediately from the first item.
\end{proof}


\subsection{First-order expressibility} We now turn to first-order logic. We work in the language $\LL = \set{+,[.,.],0}$ of Lie rings. Given $k\in \N^{>1}$, we define two formulas
\[\phi_k(z_0,z_1,z_2,z_3) = \exists y_1,\ldots,y_{k+1} \ f_{z_0}(z_2,z_3,y_1)\neq 0 \wedge \bigwedge_{i = 1}^k f_{z_0}(z_1,y_i) = f_{z_0}(z_2,y_{i+1})\wedge f_{z_0}(z_1,y_{k+1}) = 0,\]

\[\psi_k(z_0,z_1,z_2,z_3) = \exists x_1,\ldots,x_{k}\ (f_{z_0}(z_2,z_3) = f_{z_0}(x_1,z_1) \wedge \bigwedge_{i = 1}^{k-1} f_{z_0}(x_i,z_2) = f_{z_0}(x_{i+1},z_1).\]

\begin{lemma}\label{lm:3engel-inconsistent}
    If $L$ is a $3$-Engel Lie algebra over $\F_5$ and $l,k\in \N^{>1}$ are such that $l> k$, then $(\phi_k\wedge \psi_l)$ has no realisations in $L$. 
\end{lemma}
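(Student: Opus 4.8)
The plan is to fix the parameter $a = z_0$, write $f = f_a$, and recast both formulas as statements about a single commuting family of linear operators. For $w\in L$ put $D_w(v) = -[v,w^2]$. By Lemma~\ref{lm:crucialidentities}(2) these operators commute, and by Lemma~\ref{lm:transitivityofequalitythroughf} they implement the extension of $f$ by one argument: whenever $v = f(x_1,\ldots,x_n)$ one has $D_w(v) = f(x_1,\ldots,x_n,w)$, a value that depends only on $v$ and not on the chosen representation, and $f(x_1,\ldots,x_n)$ is symmetric in its arguments. This is the structural input that lets me freely regroup any expression $f(\ldots)$ as a $D$-operator applied to a shorter $f$-value.

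Suppose towards a contradiction that some $(z_0,z_1,z_2,z_3)$ realises $\phi_k\wedge\psi_l$, with witnesses $y_1,\ldots,y_{k+1}$ for $\phi_k$ and $x_1,\ldots,x_l$ for $\psi_l$. I introduce the abbreviations $a_i = f(z_1,y_i)$, $b_i = f(z_2,y_i)$ and $r_j = f(x_j,z_1)$. Then $\phi_k$ reads $a_i = b_{i+1}$ for $1\le i\le k$, together with $a_{k+1}=0$ and the nondegeneracy clause $D_{y_1}\!\big(f(z_2,z_3)\big)\neq 0$; and $\psi_l$ reads $r_1 = f(z_2,z_3)$ together with $f(x_j,z_2) = r_{j+1}$ for $1\le j\le l-1$.

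The heart of the argument is the two–parameter family $S_{i,j} = f(x_j,z_1,y_i)$, computed in two ways using symmetry and well-definedness of the $D$-operators: $S_{i,j} = D_{y_i}(r_j) = D_{x_j}(a_i)$, and likewise $f(x_j,z_2,y_i) = D_{y_i}(r_{j+1}) = D_{x_j}(b_i)$. Feeding the $\phi$-relation $b_{i+1}=a_i$ (valid for $i\le k$) into these identities produces the diagonal recurrence
\[S_{i+1,j+1} = D_{y_{i+1}}(r_{j+1}) = D_{x_j}(b_{i+1}) = D_{x_j}(a_i) = D_{y_i}(r_j) = S_{i,j},\]
valid for $1\le i\le k$ and $1\le j\le l-1$. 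Iterating along the main diagonal yields $S_{1,1} = S_{2,2} = \cdots = S_{k+1,k+1}$, and the hypothesis $l>k$ is exactly what keeps the $r$-index in range ($j$ climbs to $k+1\le l$) so that every step is licensed. I then contradict the two endpoints: the initial value $S_{1,1} = D_{y_1}(r_1) = D_{y_1}\!\big(f(z_2,z_3)\big) = f(z_2,z_3,y_1)$ is nonzero by the nondegeneracy clause of $\phi_k$, whereas the terminal value, read through the other expression for $S$, is $S_{k+1,k+1} = D_{x_{k+1}}(a_{k+1}) = D_{x_{k+1}}(0) = 0$, using $a_{k+1}=0$ and $k+1\le l$. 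Thus $0\neq S_{1,1} = S_{k+1,k+1} = 0$, the desired contradiction.

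I expect the main obstacle to be purely organisational rather than conceptual: one must check that each regrouping of $f(x_j,z_1,y_i)$ (as $D_{y_i}$ applied to $f(x_j,z_1)$ versus $D_{x_j}$ applied to $f(z_1,y_i)$) is a legitimate instance of the symmetry and substitution properties of Lemma~\ref{lm:transitivityofequalitythroughf}, and that throughout the diagonal iteration the $y$-index never exceeds $k+1$ while the $x$- and $r$-indices never exceed $l$. The inequality $l>k$ enters precisely at this bookkeeping stage, as the single condition $k+1\le l$ guaranteeing that the chain reaches $S_{k+1,k+1}$ without leaving the range of the available witnesses.
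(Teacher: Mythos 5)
Your proof is correct and is essentially the paper's own argument in different packaging: unwinding your diagonal recurrence $S_{i,i} = S_{i+1,i+1}$ reproduces exactly the chain $f(x_i,z_1,y_i) = f(x_i,z_2,y_{i+1}) = f(x_{i+1},z_1,y_{i+1})$ that the paper propagates step by step via Lemma~\ref{lm:transitivityofequalitythroughf}, with the same endpoints (the nonzero value $f(z_2,z_3,y_1)$ at one end, the vanishing $f(z_1,y_{k+1})=0$ at the other) and the same role for $l>k$. Your operators $D_w$ are a clean repackaging of the substitution items of Lemma~\ref{lm:transitivityofequalitythroughf}, not a genuinely different route.
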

\begin{proof}
Assume $L\models (\phi_k\wedge \psi_l)(A_0,\ldots,A_3)$ for some for some $A_0,\ldots,A_3\in L$. Let $y_1,\ldots,y_{k+1}$ be the witnesses of $\phi_k(A_0,\ldots,A_3)$. As $l>k$ we have $x_1,\ldots,x_{k+1},\ldots,x_l$ witnessing $\phi_l(A_0,\ldots,A_3)$. We write $f = f_{A_0}$. We have $0\neq f(A_2,A_3,y_1)$. As $f(A_2,A_3) = f(x_1,A_1)$ we may use Lemma \ref{lm:transitivityofequalitythroughf} to get $0\neq f(x_1,A_1,y_1)$. As $f(A_1,y_1) = f(A_2,y_2)$ we may again use Lemma \ref{lm:transitivityofequalitythroughf} to get $0\neq f(x_1,A_2,y_2)$. Iterating the use of Lemma \ref{lm:transitivityofequalitythroughf}, we get $0\neq f(x_{k},A_2,y_{k+1}) = f(x_{k+1},A_1,y_{k+1})$ which contradicts $f(A_1,y_{k+1}) = 0$ by Lemma \ref{lm:transitivityofequalitythroughf}.
\end{proof}

For the next lemma, we will use the following fact of Traustason.
\begin{fact}[Traustason \cite{TraustasonEngel3Engel41993}]\label{fact:traustason}
Let $L$ be a $3$-Engel Lie algebra over a field of characteristic $\neq 2,3$, then for all $a\in L$, the ideal $I(a)$ spanned by $a$ is nilpotent of class $<3$, i.e.
\[[I(a),I(a),I(a)] = 0.\]
\end{fact}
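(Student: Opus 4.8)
The plan is to pass to the associative envelope $A(L)$ and reduce the statement to the following claim: \emph{every associative product of adjoints $\ad_{c_1}\cdots\ad_{c_m}$ in $A(L)$ that contains at least three factors equal to $\ad_a$ is zero}. Granting this, the Fact follows by multilinearity. Indeed, $I(a) = \Span_\F\set{a}+[a,L]+[a,L,L]+\cdots$ is spanned by the left-normed monomials $[a,c_1,\dots,c_k]$ having $a$ as their leftmost entry, so every element of $I(a)$ is a combination of monomials containing at least one occurrence of $a$. Since expanding a bracket of brackets into left-normed form (via Jacobi) preserves the multiset of entries, for $u,v,w\in I(a)$ the triple bracket $[[u,v],w]$ is a sum of left-normed monomials each containing at least three occurrences of $a$. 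Writing such a monomial as $[d_1,\dots,d_k] = d_1\cdot\ad_{d_2}\cdots\ad_{d_k}$ and, when $d_1=a$, pushing the head into the word by antisymmetry, $[a,d_2,\dots,d_k] = -[d_2,a,d_3,\dots,d_k]$ (the degenerate case $d_2=a$ giving $0$ at once), we arrive at a product of adjoints containing three factors $\ad_a$, applied to a single element; the claim makes it vanish.

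The engine for the claim is Higgins' Fact \ref{fact:higgins} specialised to $c=a$. Writing $\alpha=\ad_a$ and $\beta=\ad_b$, identities $(a)$ and $(b)$ read $\beta\alpha^2+\alpha\beta\alpha+\alpha^2\beta=0$ and $3\beta\alpha^2-3\alpha\beta\alpha+\alpha^2\beta=0$, which yield $\beta\alpha^2=2\alpha\beta\alpha$ and $\alpha^2\beta=-3\alpha\beta\alpha$, hence $\alpha^2\beta=-\tfrac32\,\beta\alpha^2$; together with the $3$-Engel identity in the form $\alpha^3=0$. These have two useful consequences. First, a block $\alpha^2$ commutes, up to a nonzero scalar, with every $\ad_a$-free word $V$, namely $\alpha^2 V=(-\tfrac32)^{|V|}V\alpha^2$. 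Second, two occurrences of $\alpha$ separated by a single adjoint coalesce into such a block, $\alpha\beta\alpha=\tfrac12\beta\alpha^2$. Consequently, once a word with three $\alpha$'s has been massaged so that two of them sit inside one $\alpha^2$-block, one slides that block along the $\ad_a$-free stretches until it meets the third $\alpha$, producing $\alpha^3=0$. This already disposes of every word in which some two occurrences of $\alpha$ are adjacent or separated by a single factor.

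The main obstacle is the production of such a block in full generality: when the three occurrences of $\ad_a$ are pairwise separated by two or more intervening adjoints, the degree-$3$ relations above are not enough to bring any two of them together, since none of the intervening letters is a repeated $\ad_a$ to which Higgins applies. To get past this I would pass to higher-degree consequences of the Engel identity---obtained by polarising $[x,y^3]=0$ and by left/right multiplying the Higgins relations by further adjoints---and run an induction on the total number of adjoints separating consecutive occurrences of $\ad_a$, shrinking each gap step by step until a block forms. This collection process is the genuine technical heart of the statement (and is where Traustason's analysis does the real work); it is also where the hypothesis $\chara\neq 2,3$ is indispensable, since the scalars $2,3$ must be invertible and the full linearisation of $[x,y^3]=0$ must be available.
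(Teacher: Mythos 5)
You are attempting to reprove a statement that the paper itself does not prove: Fact \ref{fact:traustason} is imported as a black box from Traustason \cite{TraustasonEngel3Engel41993}, so your attempt must stand on its own, and as written it does not. The good parts first: the reduction to the operator claim (every word in $A(L)$ containing at least three factors $\ad_a$ vanishes) is correct, including the equivalence via left-normed expansion and the antisymmetry trick for a leading $a$; and the degree-$3$ relations you extract from Fact \ref{fact:higgins} with $c=a$ are right: writing $\alpha=\ad_a$, $\beta=\ad_b$, one gets $\beta\alpha^2=2\alpha\beta\alpha$, $\alpha^2\beta=-\tfrac32\beta\alpha^2$, $\alpha^3=0$, hence your coalescing rule $\alpha\beta\alpha=\tfrac12\beta\alpha^2$ and sliding rule $\alpha^2V=(-\tfrac32)^{|V|}V\alpha^2$ for $\ad_a$-free words $V$. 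But these only kill words in which some two occurrences of $\alpha$ are adjacent or separated by one letter. The remaining configuration --- all gaps of length at least $2$ --- is exactly where the theorem lives, and there you offer only a conditional plan (``I would pass to higher-degree consequences\ldots{} run an induction\ldots{} shrinking each gap step by step'') without exhibiting a single identity that shrinks a gap or checking that any rewriting terminates. By your own admission this is ``the genuine technical heart,'' and deferring it to Traustason means the proposal is a reduction plus easy cases, not a proof.

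The frustrating part is that the missing step is within reach of the tools you already assembled; no polarisation of $[x,y^3]=0$ beyond degree $3$ and no ``higher-degree consequences'' are needed. Linearise your own relation $bc^2=2cbc$ in $c$ (substitute $c\mapsto c+d$ and subtract the pure terms): for all $b,c,d\in L$ one has, in $A(L)$,
\[
bcd+bdc=2cbd+2dbc,
\qquad\text{hence}\qquad
dbc=\tfrac12\,bcd+\tfrac12\,bdc-cbd .
\]
Taking $d=a$, this rewrites any factor $\alpha\beta\gamma$ (with $\beta,\gamma$ the first two letters of the gap following the leftmost $\alpha$) as $\tfrac12\beta\gamma\alpha+\tfrac12\beta\alpha\gamma-\gamma\beta\alpha$: in every resulting term the leftmost $\alpha$ has moved one or two places to the right and the rest of the word, in particular all later occurrences of $\alpha$, is untouched. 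So the gap between the first two occurrences of $\alpha$ strictly decreases in every term, and induction on that gap reduces everything to your gap-$\leq 1$ cases, where coalescing and sliding produce a factor $\alpha^3=0$. With that paragraph inserted, your argument becomes a complete and self-contained proof valid in characteristic $\neq 2,3$; as submitted, the crux is a citation rather than an argument.
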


\begin{lemma}\label{lm:3engel-consistent}
    If $L$ is a $3$-Engel Lie algebra over $\F_5$. Let $k\in \N^{>1}$ and assume that for $n = 2k+1$, there exists $a,b_1,\ldots,b_{n}$ such that
    \[0\neq [a,b_1^2,\ldots ,b_n^2].\]
    Then $(\phi_k\wedge \psi_k)$ has a realisation in $L$.
\end{lemma}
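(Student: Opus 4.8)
The plan is to fix $z_0=a$ and work entirely with $f=f_a$. The first step is to reduce every quantity appearing in $\phi_k,\psi_k$ to the normal form provided by Lemma~\ref{lm:transitivityofequalitythroughf}: for arguments $w_1,\dots,w_m$ one has $f(w_1,\dots,w_m)=(-1)^{m+1}[a,w_1^2,\dots,w_m^2]$, this is totally symmetric, and by Lemma~\ref{lm:crucialidentities} the squares $w_i^2$ commute in $A(L)$. Consequently $f(w_1,\dots,w_m)$ depends only on the multiset of generators occurring in the $w_i$, two such expressions are equal whenever this multiset agrees, and, crucially, the value is $0$ as soon as some generator occurs at least twice: a repeated $b$ contributes $b^2b^2=b^4=\ad_b^4=0$ because $\ad_b^3=0$ in a $3$-Engel algebra. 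I would also record that every sub-product $[a,b_{i_1}^2,\dots,b_{i_j}^2]$ of the given non-zero $[a,b_1^2,\dots,b_n^2]$ is again non-zero, since deleting commuting square factors from a non-vanishing iterated bracket cannot make it vanish.

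With this dictionary the statement becomes a combinatorial construction: realise $z_1,z_2,z_3$ and the witnesses $y_1,\dots,y_{k+1}$, $x_1,\dots,x_k$ as left-normed brackets built from the letters $b_1,\dots,b_n$, using the reassociation rule $f([u,v],w)=f(u,[v,w])$ of Lemma~\ref{lm:associativityoff} to move freely between a bracket and the list of its letters. The one required non-vanishing, $f(z_2,z_3,y_1)\neq0$, should exhaust all $n=2k+1$ letters exactly once, and the terminal condition $f(z_1,y_{k+1})=0$ should be arranged by forcing a single letter to repeat. Each equation of the two chains is then verified individually by the substitution rule of Lemma~\ref{lm:transitivityofequalitythroughf} (if $f(u,v)=f(u',v')$ then $f(u,v,z)=f(u',v',z)$), being either a genuine equality of equal-multiset values or a trivial $0=0$. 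The number $2k+1$ is exactly what lets the $\phi$-chain and the $\psi$-chain, each of length $k$, interlock and run out at the same moment; this is the constructive counterpart of the zig-zag of Lemma~\ref{lm:3engel-inconsistent}, where making $\psi$ one step longer forced a contradiction, whereas with equal lengths the process closes up.

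The main obstacle is the joint bookkeeping of the two chains, which share $z_1,z_2,z_3$ but pair them in opposite orders ($\phi_k$ uses $f(z_1,y_i)=f(z_2,y_{i+1})$, $\psi_k$ uses $f(x_i,z_2)=f(x_{i+1},z_1)$). A first attempt in which every bracket is a \emph{set} of letters, each used once, runs into a genuine obstruction: from $f(z_2,z_3,y_1)\neq0$ and $f(z_2,z_3)=f(x_1,z_1)$ one is forced to have $z_1$ supported inside $z_2,z_3$ and then $z_2$ supported inside $z_1$, and these pull the two chains in incompatible directions if one insists on matching multisets at every single step. Overcoming this is where I expect the real work to lie: one must deliberately let certain intermediate values vanish and exploit relations beyond bare multiset matching---in particular Traustason's Fact~\ref{fact:traustason}, $[I(a),I(a),I(a)]=0$, which yields extra collapses among the brackets living in the ideal $I(a)$ where all the $f$-values sit---so as to route the two chains past one another and deposit the single unavoidable defect precisely at the end, where it delivers $f(z_1,y_{k+1})=0$ instead of contradicting $\phi_k\wedge\psi_k$. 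Once such witnesses are exhibited, checking the clauses of $\phi_k$ and $\psi_k$ is a direct application of the principles from the first step.
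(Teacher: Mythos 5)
There is a genuine gap here, and you flag it yourself: after building the dictionary and diagnosing the obstruction to single-monomial witnesses, you write that overcoming it ``is where I expect the real work to lie''---but that work is precisely the content of the lemma, and your proposal never does it. What you actually establish is (i) the normal form $f(w_1,\dots,w_m)=(-1)^{m+1}[a,w_1^2,\dots,w_m^2]$, its symmetry, the commutation of squares, and the vanishing criterion for repeated letters (this is essentially Lemma \ref{lm:transitivityofequalitythroughf} combined with Fact \ref{fact:traustason}); and (ii) that no assignment of \emph{single} left-normed monomials to $z_1,z_2,z_3,y_i,x_i$ can satisfy both chains simultaneously. Point (ii) is a correct and useful negative observation, but your proposed remedy---deliberately letting intermediate values vanish and invoking Fact \ref{fact:traustason} for ``extra collapses''---is a hope, not a construction: no witnesses are exhibited, no clause of $\phi_k\wedge\psi_k$ is verified, and so the lemma is not proved.

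The missing idea, which is how the paper escapes exactly the obstruction you found, is to take the witnesses for $z_1$ and $z_2$ to be \emph{linear combinations} of monomials rather than monomials, and to use the bilinearity of $q$ (namely $q(\sum_i M_i)=\sum_{i,j}B(M_i,M_j)$, since $q(x)=B(x,x)$) as the collapsing mechanism. Concretely, the paper sets $z_3=[b_1,\dots,b_{k-1},b_{2k+1}]$ and takes single brackets omitting one letter for the chain witnesses, e.g. $x_i=[b_1,\dots,\hat{b_i},\dots,b_k]$ and $y_i=[b_{k+1},\dots,\hat{b_{k+i}},\dots,b_{2k}]$ for $1<i\leq k$, but sets $z_1=\sum_{i=2}^k[b_i,b_{k+i}]+[b_1,b_{k+1},b_{2k+1}]$ and $z_2=\sum_{i=1}^{k-1}[b_i,b_{k+i+1}]+[b_k,b_{k+1}]$. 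Each required value is then $q$ of a sum of monomials; the weight argument (any monomial of weight $\geq 3$ in some element vanishes, by Fact \ref{fact:traustason}) annihilates every cross term $B(M_i,M_j)$ and every diagonal term but one, so both sides of each equation collapse to the \emph{same} single survivor, e.g. $f(z_1,y_{i_0})=f(b_{i_0},b_{k+1},\dots,b_{2k})=f(z_2,y_{i_0+1})$, while the surviving term of $f(z_2,z_3,y_1)$ is $q([b_1,\dots,b_{2k+1}])=[a,b_1^2,\dots,b_n^2]\neq 0$ and the term $f(z_1,y_{k+1})$ has no survivor at all. Different summands produce different cross terms but a common diagonal survivor---this is what lets the two oppositely-ordered chains coexist, and it is exactly what multiset bookkeeping on single monomials cannot emulate. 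Until witnesses of this kind are written down and the clauses checked, your argument establishes the preliminaries and the difficulty, but not the statement.
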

\begin{proof}
    We define the following
    \begin{align*}
        A_0 &= a\\
        A_1 &= \sum_{i = 2}^k [b_i,b_{k+i}] + [b_1,b_{k+1},b_{2k+1}]\\
        A_2 &= \sum_{i=1}^{k-1} [b_i,b_{k+i+1}]+[b_k,b_{k+1}]\\
        A_3 &= [b_1,\ldots ,b_{k-1}, b_{2k+1}]
    \end{align*}
    As above, we will write $f$ for $f_{A_0} = f_a$. Similarly, $q = q_a$ and $B = B_a$.

    \noindent \textbf{Step 1.} We prove that $L\models \phi_k(A_0,\ldots,A_3)$. We define 
    \[y_1 := [b_{k+2},\ldots ,b_{2k}] \ ;\  y_i = [b_{k+1},\ldots ,\hat{b_{k+i}}, \ldots ,b_{2k}]\ (1<i\leq k)\ ; \ y_{k+1} = [b_{k+1},\ldots ,b_{2k+1}].\] 
    We have to check that $f(A_2,A_3,y_1)\neq 0$, $f(A_1,y_i) = f(A_2,y_{i+1})$ and $f(A_1,y_{k+1}) = 0$. We set up some conventions and notations.

The \textit{weight} of a Lie monomial $M$ in $x$ is the number of times $x$ occurs in $M$.
    \begin{claim}\label{claim_bilinearmapequals0}
        Assume that $M,N$ are two Lie monomials such $M$ is of weight $\geq 2$ in some $b\in L$ and $N$, is of weight $1$ in $b$ then 
        \[q(M) = B(M,N) = B(N,M) =  0.\]
    \end{claim}
    \begin{proof}
        As $q(M) = [a,M^2]$, the weight of $q(M)$ in $b$ is $\geq 4$ hence $q(M) = 0$ by Fact \ref{fact:traustason}. Similarly the weight of $B(M,N) = [a,M,N]$ in $b$ is $\geq 3$ hence $B(M,N) = 0$.
    \end{proof}
    By Fact \ref{fact:traustason}, any nontrivial Lie monomial in $L$ has weight $0,1,2$ in any given element of $L$.

    Given a multiset $\set{x_1,\ldots,x_n}$ of elements of $L$, we will usually denote by $M\set{x_1,\ldots,x_n}$ a Lie monomial using each element of the multiset once and only once. For instance $[x,y^2,z,y] = M\set{x,y,y,y,z}$ and $[x,[y,[x,y]]] = M\set{x,x,y,y}$. 

    We prove that $f(A_2,A_3,y_1)\neq 0$. We can then compute
    \begin{itemize}
        \item $[A_2,A_3] = \sum_{i = 1}^{k-1} M\set{b_i,b_{k+i+1},b_1,\ldots,b_{k-1},b_{2k+1}}+M\set{b_1,\ldots,b_{k+1},b_{2k+1}}$\hfill $(\star)_1$
        \item $[A_2,A_3,y_1] = \sum_{i = 1}^{k-1}M\set{b_ib_{k+i+1},b_1,\ldots,b_{k-1},b_{k+2}\ldots b_{2k+1}}+M\set{b_1,\ldots,b_{2k+1}}$
    \end{itemize}
    For $1\leq i\leq k-1$, we set 
    \[M_i = M\set{b_ib_{k+i+1},b_1,\ldots,b_{k-1},b_{k+2}\ldots b_{2k+1}}\]  and $M_0 = M\set{b_1,\ldots,b_{2k+1}}$.
    We obtain:
    \[f(A_2,A_3,y_1) = q([A_2,A_3,y_1]) = q(\sum_{i = 0}^{k-1} M_i) = \sum_{i,j = 0}^{k-1}B(M_i,M_j)\]
    We have that $M_0$ is of weight $1$ in $b_i$ for each $1\leq i\leq 2k+1$ (hence of weight $2$ in none of the $b_i$'s). Further, $M_i$ is of weight $2$ in $b_i$ and of weight $1$ in $b_j$ for $1\leq i\neq j\leq k-1$. We conclude that all terms $B(M_i,M_j)$ vanish except for $i = j = 0$, so $f(A_2,A_3,y_1) = q(M_0)$. By Lemma \ref{lm:transitivityofequalitythroughf} we also have 
    \[q(M\set{b_1,\ldots,b_{2k+1}}) = q([b_1,\ldots ,b_{2k+1}]) = [a,b_1^2,\ldots ,b_n^2]\]
    so $f(A_2,A_3,y_1)\neq 0$ by hypothesis.

    We check that $f(A_1,y_i) = f(A_2,y_{i+1})$ for all $1\leq i\leq k$. Fix $1\leq i_0\leq k$. We compute
    \begin{itemize}
        \item $[A_1,y_{i_0}] = \sum_{i = 2}^k M\set{b_i,b_{k+i},b_{k+1},\ldots,\hat{b_{k+i_0}},\ldots b_{2k}} + M\set{b_1,b_{k+1},b_{k+1},\ldots, \hat{b_{i_0}},\ldots,b_{2k+1}}$
        \item $[A_2y_{i_0+1}] = \sum_{i = 1}^{k-1} M\set{b_i,b_{k+i+1},b_{k+1},\ldots,\hat{b_{k+i_0+1}},\ldots,b_{2k}} + M\set{b_k,b_{k+1},b_{k+1},\ldots,\hat{b_{k+i_0+1}},\ldots,b_{2k}}$
    \end{itemize}
    We treat each equality separately. Set $M_0 = M\set{b_1,b_{k+1},b_{k+1},\ldots, \hat{b_{i_0}},\ldots,b_{2k+1}}$ and for $2\leq i\leq k$, set
    \[M_i = M\set{b_i,b_{k+i},b_{k+1},\ldots,\hat{b_{k+i_0}},\ldots b_{2k}}.\] Then $M_0$ is of weight $2$ in $b_{k+1}$ and $M_i$ is of weight $1$ in $b_{k+1}$ so $q(M_0) = B(M_0,M_i) = 0$ for all $2\leq i\leq k$. Further, for $2\leq i\neq j\leq k$, $M_i$ is of weight $2$ in $k+i$ and of weight $1$ in $k+j$ except $M_{i_0} = M\set{b_{i_0},b_{k+1},\ldots,b_{2k}}$ which is of weight $1$ in each entries. We conclude that for $2\leq i,j\neq i_0$ we have $B(M_i,M_j) = B(M_i,M_{i_0}) = 0$ and hence
    \[f(A_1,y_{i_0}) = q([A_1,y_{i_0}]) = \sum_{i,j = 0,2,\ldots}^kB(M_i,M_j) = B(M_{i_0},M_{i_0}) = f(b_{i_0},b_{k+1},\ldots,b_{2k})
    \]
    Now set $N_0 = M\set{b_k,b_{k+1},b_{k+1},\ldots,\hat{b_{k+i_0+1}},\ldots,b_{2k}}$ and $N_i = M\set{b_i,b_{k+i+1},b_{k+1},\ldots,\hat{b_{k+i_0+1}},\ldots,b_{2k}}$ for $1\leq i\leq k-1$. As $N_0$ is of weight $2$ in $b_{k+1}$ and each $N_i$ is of weight $1$ in $b_{k+1}$ we have that $B(N_0,N_i) = 0$ for all $1\leq i\leq k-1$. Then, for all $1\leq i\neq j\leq k-1$, each $N_i$ is of weight one in $k+j$ and of weight $2$ in $b_{k+i+1}$ except $N_{i_0}$ which is of weight $1$ in all of its entries. We conclude
    \[f(A_2,y_{i_0+1}) = \sum_{i,j = 0}^k B(N_i,N_j) = q(N_{i_0}) = f(b_{i_0},b_{k+1},\ldots,b_{2k}).\]
    We conclude $f(A_1,y_{i_0}) = f(A_2,y_{i_0+1})$.

    It remains to check that $f(A_1,y_{k+1}) = 0$. We compute
    \[[A_1,y_{k+1}] = \sum_{i = 2}^k M\set{b_ib_{k+i}b_{k+1},\ldots,b_{2k+1}}+M\set{b_1,b_{k+1},b_{k+1},b_{k+2}\ldots,b_{2k},b_{2k+1}b_{2k+1}}\]
    For $2\leq i\leq k$ set $M_i = M\set{b_i,b_{k+i},b_{k+1},\ldots,b_{2k+1}}$ and set $M_0 = M\set{b_1,b_{k+1},b_{k+1},b_{k+2}\ldots,b_{2k},b_{2k+1}b_{2k+1}}$. Then $M_0$ has weight $2$ in $b_{k+1}$ and each $M_i$ has weight $1$ in $b_{k+1}$ so $B(M_0,M_i) = B(M_i,M_0) = 0$ for all $2\leq i\leq k$. For each $2\leq i\neq j\leq k$, $M_i$ has weight $2$ in $b_{k+i}$ and $M_j$ has weight one in $b_{k+i}$ so $B(M_i,M_j) = 0$. We conclude that $f(A_1,y_{k+1}) = 0$.

    \noindent \textbf{Step 2.} We prove that $L\models \psi_k(A_0,\ldots,A_3)$. For each $1\leq i\leq k$ we set 
    \[x_i = [b_1,\ldots ,\hat{b_i},\ldots ,b_k]. \]

    First we check that $f(A_2,A_3) = f(x_1,A_1)$. From $(\star)_1$ and by computing, we have 
    \begin{itemize}
        \item $[A_2,A_3] = \sum_{i = 1}^{k-1} M\set{b_i,b_{k+i+1},b_1,\ldots,b_{k-1},b_{2k+1}}+M\set{b_1,\ldots,b_{k+1},b_{2k+1}}$
        \item $[x_1,A_1] = \sum_{i = 2}^kM\set{b_i,b_{k+i},b_2,\ldots,b_k}+ M\set{b_1,\ldots,b_{k+1},b_{2k+1}}$
    \end{itemize}

By computing each term separately and sorting out the weight of each monomial, we get 
\[f(A_2,A_3) = f(b_1,\ldots,b_{k+1},b_{2k+1}) = f(x_1,A_1).\]
Similary, for each $1\leq i_0\leq k-1$ we get 
\begin{itemize}
    \item $[x_{i_0},A_2] = \sum_{i = 1}^{k-1} M\set{b_1,\ldots,\hat{b_{i_0}},\ldots,b_k,b_i,b_{k+i+1}}+M\set{b_1,\ldots,\hat{b_{i_0}},\ldots,b_k,b_k,b_{k+1}}$
    \item $[x_{i_0+1},A_1] = \sum_{i=2}^k M\set{b_1,\ldots,\hat{b_{i_0+1}},\ldots,b_k,b_i,b_{k+i}}+M\set{b_1,\ldots,\hat{b_{i_0+1}},\ldots,b_k,b_1,b_{k+1},b_{2k+1}}$.
\end{itemize}
By sorting out the weights and reasoning as above, we get 

\[f(x_{i_0},A_2) = f(b_1,\ldots,b_k,b_{k+i_0+1}) = f(x_{i_0+1},A_1).\]
This concludes the proof of the lemma.
\end{proof}

\subsection{Conclusion}

\begin{theorem}\label{thm:finite4typesEngel3char5nilpotent}
    Let $L$ be a $3$-Engel Lie algebra over a field of characteristic $5$. Assume that that there are finitely many orbits in the action of $\Aut(L)$ on $L\times L\times L\times L$, then $L$ is nilpotent.
\end{theorem}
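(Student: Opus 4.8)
The plan is to argue by contraposition: assuming $L$ is not nilpotent, I will manufacture arbitrarily long nonzero products of squares, feed them into the consistency Lemma~\ref{lm:3engel-consistent} to realise $\phi_k\wedge\psi_k$ for every $k$, and then use the finiteness of the orbit action on $L^4$ together with the inconsistency Lemma~\ref{lm:3engel-inconsistent} to reach a contradiction. The whole argument thus splits into a purely algebraic step (non-nilpotency forces unbounded products of squares) and a short model-theoretic pigeonhole step.

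For the algebraic step I work in the enveloping algebra $A(L)$, recalling that $L$ is nilpotent if and only if $A(L)$ is nilpotent, since a product $\ad_{c_1}\cdots\ad_{c_m}$ applied to $x$ is the commutator $[x,c_1,\dots,c_m]$. By Lemma~\ref{lm:crucialidentities}(1) every square $\ad_c^2$ is central in $A(L)$, so the squares generate a central subalgebra $Z$, and I let $(Z_+)=Z_+A(L)$ be the two-sided ideal they generate. The key observation is that modulo $(Z_+)$ the algebra collapses dramatically: expanding $\ad_{a+b}^2$ shows $\ad_a\ad_b+\ad_b\ad_a\equiv 0$, while Jacobi gives the exact identity $\ad_a\ad_b-\ad_b\ad_a=\ad_{[a,b]}$; adding these and using $2^{-1}=3$ in $\F_5$ yields $\ad_a\ad_b\equiv 3\,\ad_{[a,b]}\pmod{(Z_+)}$. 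Hence every product of adjoints reduces modulo $(Z_+)$ to a scalar multiple of a single adjoint of an iterated bracket, and comparing the two bracketings of a triple product forces $\ad_{[a,b,c]}\equiv 0$, that is $A(L)^3\subseteq (Z_+)$. Because $Z_+$ is central this iterates to $A(L)^{3j}\subseteq Z_+^{\,j}A(L)$ for all $j$.

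Now if $L$ is not nilpotent then $A(L)^m\neq 0$ for every $m$, so $Z_+^{\,j}\neq 0$ for every $j$; concretely, for each $j$ there are $a,b_1,\dots,b_j\in L$ with $[a,b_1^2,\dots,b_j^2]\neq 0$. Taking $j=2k+1$ and invoking Lemma~\ref{lm:3engel-consistent}, the formula $\phi_k\wedge\psi_k$ is realised in $L$ for every $k$. Since there are only finitely many $\Aut(L)$-orbits on $L^4$ and each $\phi_k\wedge\psi_k$ is $\Aut(L)$-invariant, some single orbit satisfies $\phi_k\wedge\psi_k$ for infinitely many $k$; picking two such indices $k<l$, any tuple in that orbit satisfies $\phi_k$ and $\psi_l$ simultaneously, contradicting Lemma~\ref{lm:3engel-inconsistent}. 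This contradiction shows $L$ is nilpotent.

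I expect the main obstacle to be the algebraic step, and specifically the realisation that the obstruction to nilpotency is concentrated entirely in the central squares: once one sees the collapse $\ad_a\ad_b\equiv 3\,\ad_{[a,b]}$ modulo the ideal generated by squares (where characteristic $5$ enters through $2^{-1}=3$, mirroring the crucial identity $[x,y]^2=-x^2y^2$ of Lemma~\ref{lm:crucialidentities}), the bound $A(L)^{3j}\subseteq Z_+^{\,j}A(L)$, and hence the equivalence between global nilpotency and boundedness of products of squares, follows cleanly. By contrast the orbit-counting step is routine: it uses only that $\phi_k,\psi_k$ are first-order formulas in four free variables and a crude pigeonhole over the finitely many $4$-orbits, with no monotonicity or further model theory required.
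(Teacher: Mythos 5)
Your proposal is correct, and its model-theoretic half coincides with the paper's: the realisation sets of $\phi_k\wedge\psi_k$ are unions of $\Aut(L)$-orbits, finitely many orbits on $L^4$ force two indices $k<l$ whose realisation sets share an orbit, and any tuple in that orbit then satisfies $\phi_k\wedge\psi_l$, contradicting Lemma \ref{lm:3engel-inconsistent}. Where you genuinely diverge is the algebraic step linking nilpotency to long products of squares. The paper argues in the forward direction: granting the identity $[a,b_1^2,\ldots,b_n^2]=0$, it sets $I=\Span_\F([a,b^2]\mid a,b\in L)$, notes that $I$ is an ideal by Lemma \ref{lm:crucialidentities}(1), observes that $L/I$ is $2$-Engel and hence nilpotent of class $2$ (a known theorem in characteristic $\neq 3$), and then rewrites any bracket of length $2n+1$ as a sum of terms $[a,b_1^2,\ldots,b_n^2]$ by repeatedly pushing squares to the right. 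You argue contrapositively inside $A(L)$: squares are central by Lemma \ref{lm:crucialidentities}(1), polarizing $\ad_{a+b}^2$ gives $\ad_a\ad_b+\ad_b\ad_a\equiv 0$ modulo the ideal $J$ generated by the squares, Jacobi gives $\ad_a\ad_b-\ad_b\ad_a=\ad_{[a,b]}$ exactly, hence $\ad_a\ad_b\equiv 3\ad_{[a,b]}\pmod J$, from which $A(L)^3\subseteq J$, the two bracketings of a triple product killing $\ad_{[x,y,z]}$ via Jacobi; centrality then iterates this, and non-nilpotency of $L$ (equivalently of $A(L)$) yields nonzero $[a,b_1^2,\ldots,b_j^2]$ for every $j$. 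In effect you re-prove the $2$-Engel collapse inline rather than citing it, which makes your argument self-contained and isolates exactly what is needed (centrality of squares plus invertibility of $2$); the paper's route is shorter and gives a slightly better nilpotency class bound ($2n+1$ versus roughly $3n$ from $A(L)^{3j}$). Both are valid proofs.

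Two small inaccuracies, neither of which damages the argument. First, $A(L)$ need not be unital, so the ideal generated by the central squares is $Z_+ + Z_+A(L)$ rather than $Z_+A(L)$, and the iterated containment should read $A(L)^{3j}\subseteq Z_+^{\,j}+Z_+^{\,j}A(L)$; the conclusion that $Z_+^{\,j}=0$ forces $A(L)^{3j}=0$ is unaffected. Second, your closing commentary misplaces where characteristic $5$ enters your computation: $2^{-1}=3$ is available in any odd characteristic, and the genuinely characteristic-$5$ ingredient in your argument is the centrality of squares from Lemma \ref{lm:crucialidentities}(1), together of course with Lemmas \ref{lm:3engel-consistent} and \ref{lm:3engel-inconsistent} themselves.
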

\begin{proof}
    Let $L$ be as in the hypotheses. We start with a claim.
    \begin{claim}
        There exists $n\in \N$ such that $[a,b_1^2,\ldots ,b_n^2] = 0$ for all $a,b_1,\ldots,b_n\in L$.
    \end{claim}
    \begin{proof}
        Assume not, then by Lemma \ref{lm:3engel-consistent} for each $k\in \N$ there exists a $4$ tuple $\vec c_k = (c_{0,k},\ldots,c_{3,k})$ which satisfies $\Phi_k := (\phi_k\wedge \psi_k)$ in $L$. If 
        \[ P_k := \set{(u_0,\ldots, u_3)\in L^4\mid L\models \Phi_k(u_0,\ldots,u_3)}\]
        then $\vec c_k\in P_k$, for each $k$. Observe that $P_k$ and hence $\bigcup_{k\in \N} P_k$ are closed under the action of $\Aut(L)$. As $L\times L\times L\times L$ has only finitely many orbits under $\Aut(L)$, there exists $k\neq l\in \N$ and $\sigma\in \Aut(L)$ such that $\sigma(\vec c_k) = \vec c_l$. This implies that $c_k\in P_k\cap P_l$, which contradicts Lemma \ref{lm:3engel-inconsistent}. 
    \end{proof}

     Let $I = \Span_\F([a,b^2]\mid a,b\in L)$. By Lemma \ref{lm:crucialidentities} (1), for all $c\in L$ we have $[[a,b^2],c] = [[a,c],b^2]$ hence it follows that $I$ is an ideal of $L$. The quotient Lie algebra $L/I$ is $2$-Engel hence nilpotent of class $2$. It follows that the product of every $3$ elements in $L$ is of the form $\sum_i[a_i,b_i^2]$. Let $c = 3+2(n-1)$,  $x_1,\ldots,x_{c}\in L$ and write $[x_1, x_2 ,x_3] = \sum_i [a_i,b_i^2]$. Then 
     \[[x_1,\ldots ,x_{c}] = \sum_i[ [a_i,b_i^2],x_4,\ldots ,x_{c}] = \sum_i[ a_i,x_4,\ldots ,x_{c+1},b_i^2]\]
     By immediate iteration, the right hand side is a sum of element of the form $[a,b_1^2,\ldots ,b_n^2]$ hence $[x_1,\ldots ,x_{c}] = 0$.
\end{proof}

\begin{remark}
Note that the identity
    $[a_1,b_1^2,a_2,b_2^2,\ldots ,a_n,b_n^2] = 0$ follows from the identity $[a_1,b_1^2,\ldots ,b_n^2] = 0$ using iteratively Lemma \ref{lm:crucialidentities}. It follows that not only $L/I$ is nilpotent but also $I$. One can use a classical result of Higgins to conclude that $L$ is nilpotent.
\end{remark}

We deduce a very particular case of Wilson's conjecture for groups.
\begin{corollary}
    Let $G$ be an $\omega$-categorical group of prime exponent $p>3$ such that every $3$-generated subgroup has nilpotency class $\leq 3$. Then $G$ is nilpotent.
\end{corollary}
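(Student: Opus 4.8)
The plan is to transfer the problem to Lie algebras through the local Lazard correspondence of Section~\ref{section:lazardcorrespondence} and then invoke what is already known for $3$-Engel Lie algebras. Since $G$ has prime exponent $p>3$ and every $3$-generated subgroup is nilpotent of class $\leq 3<p$, the hypotheses of that setup are met. First I would put on the underlying set of $G$ a Lie algebra structure $L^G$ over $\F_5$ (more precisely over $\F_p$) via the inverse Baker--Campbell--Hausdorff formula: because every $2$- and $3$-generated subgroup has class $<p$, the relevant (inverse) BCH expressions truncate to fixed polynomial terms, so the operations $+$ and $[\cdot,\cdot]$ of $L^G$ are first-order $\emptyset$-definable in the group language on $G$. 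The Lie algebra axioms involve at most three elements, so they hold because they hold inside each $3$-generated subgroup, where the correspondence is a genuine equivalence; this is exactly where the hypothesis on $3$-generated (rather than merely $2$-generated) subgroups is used. Moreover $(L^G,+)$ has exponent $p$ since $p\cdot x = x^p = 1 = 0$, so $L^G$ is a Lie algebra over $\F_p$.

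Next I would record two consequences. First, $L^G$ is again $\omega$-categorical: its operations are $\emptyset$-definable in $G$, so $\Aut(G)\subseteq\Aut(L^G)$, and the number of $\Aut(L^G)$-orbits on $(L^G)^n$ is at most the number of $\Aut(G)$-orbits on $G^n$, which is finite for each $n$; by Ryll--Nardzewski $L^G$ is $\omega$-categorical, and in particular $\Aut(L^G)$ has finitely many orbits on $L^G\times L^G\times L^G\times L^G$. Second, $L^G$ is $3$-Engel: for $x,y\in L^G$ the subalgebra $\langle x,y\rangle_{\mathrm{Lie}}$ coincides, as a subset, with the subgroup $\langle x,y\rangle$ (within which Lazard is an honest equivalence), whose nilpotency class is $\leq 3$; hence the weight-$4$ left-normed bracket $[x,y,y,y]$ vanishes, which is precisely the identity $[x,y^3]=0$.

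With $L^G$ established to be a $3$-Engel Lie algebra over $\F_p$ with $p\geq 5$, nilpotency follows from the developed theory: if $p=5$ this is exactly Theorem~\ref{thm:finite4typesEngel3char5nilpotent}, applied using the finiteness of the orbit count on $(L^G)^4$ recorded above; if $p\geq 7$ it is the classical result of Higgins~\cite{higginsEngel} that $3$-Engel Lie algebras of characteristic $\neq 2,5$ are nilpotent (indeed of class $\leq 4$), requiring no categoricity hypothesis. In either case $L^G$ is nilpotent, say of class $c$. Finally I would push nilpotency back to $G$: under the Lazard correspondence the terms of the lower central series of the group and of the associated Lie algebra coincide as subsets, so $\gamma_{c+1}(L^G)=0$ forces $\gamma_{c+1}(G)=1$ and $G$ is nilpotent.

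The main obstacle is this last transfer step in the case $p=5$. For $p\geq 7$ the Lie algebra has class $\leq 4<p$, so the Lazard correspondence is globally an equivalence and nilpotency of $G$ (of class $\leq 4$) is immediate. For $p=5$, however, Theorem~\ref{thm:finite4typesEngel3char5nilpotent} gives no bound below $p$ on the class $c$ of $L^G$, so one cannot appeal to the global equivalence directly; the delicate point is to justify that the coincidence of the lower central series persists even when $c\geq p$. Concretely, one wants that a group commutator of weight $k$ expands, via Hausdorff's commutator formula, into a Lie series all of whose terms are brackets of weight $\geq k$, so that $\gamma_{c+1}(L^G)=0$ annihilates every weight-$(c+1)$ group commutator regardless of whether $c<p$. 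Making this globalization precise, despite the merely local ($<p$) validity of the defining BCH expansions, is the heart of the argument.
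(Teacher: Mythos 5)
Your proposal is correct and follows essentially the same route as the paper: use the local Lazard correspondence to endow $G$ with a parameter-free definable, hence $\omega$-categorical, $3$-Engel Lie algebra structure $L^G$ over $\F_p$, then apply Theorem~\ref{thm:finite4typesEngel3char5nilpotent} when $p=5$ and Higgins' theorem \cite{higginsEngel} when $p\geq 7$. The back-transfer you flag as the remaining ``heart of the argument'' is exactly the point the paper leaves implicit (``by the Baker--Campbell--Hausdorff formula''), and it is settled by precisely the weight argument you sketch: by induction on $k$, every left-normed group commutator of weight $k$ expands --- first inside the $2$-generated subgroup $\langle [x_1,\ldots,x_{k-1}]_{\mathrm{grp}},x_k\rangle$, which has class $\leq 3<p$ so the full correspondence and the inverse BCH commutator formula apply there, then by multilinearity of the bracket of $L^G$ --- into a sum of Lie brackets of weight $\geq k$, so $\gamma_{c+1}(L^G)=0$ forces $\gamma_{c+1}(G)=1$ regardless of whether $c\geq p$.
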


\begin{proof}
    We invoke the Lazard correspondence. As the nilpotency class of every $3$-generated subgroup is less than $p$, we use the inverse Baker-Campbell-Hausdorff formula in order to define an $\F_p$-Lie algebra structure $(L,+,[.,.])$ with domain $G$. Then $L$ is again $\omega$-categorical and as the nilpotency class of every two generated subgroups is $\leq 3$, $L$ is $3$-Engel hence nilpotent by above.
\end{proof}


\subsection*{Acknowledgement} 
Je suis éternellement reconnaissant envers Ombeline Fougerat pour son support constant et infaillible. I am immensely grateful to Dugald Macpherson for suggesting this fascinating problem and for numerous discussions and counsel. I am indebted to Aris Papadopoulos for hearing my constant blabbering about Engel Lie algebras during the last few months. I am very thankful to Gunnar Traustason for providing me his PhD thesis, crucial for this work and answering many of my questions. I am very grateful to Michael Vaughan-Lee for his availability to answer my many questions. I am thankful to Yves de Cornulier for discussions on the Lazard correspondence.

\bibliographystyle{plain}
\bibliography{biblio.bib}{}

\end{document}